\begin{document}

\def\framebox#1{\text{\textbf{[\negthinspace[}#1\textbf{]\negthinspace]}}} 

\allowdisplaybreaks


\title[Shafarevich-type theorems for dynamical systems]
     {Good reduction and Shafarevich-type theorems for dynamical systems with portrait level structures}


\date{\today}
\author[Joseph H. Silverman]{Joseph H. Silverman}
\email{jhs@math.brown.edu}
\address{Mathematics Department, Box 1917
         Brown University, Providence, RI 02912 USA}
\subjclass[2010]{Primary: 37P45; Secondary: 37P15}
\keywords{good reduction, dynamical system, portrait, Shafarevich con\-jec\-ture}
\thanks{Research supported by Simons Collaboration Grant \#241309}



\hyphenation{ca-non-i-cal semi-abel-ian}


\newtheorem{theorem}{Theorem}
\newtheorem{lemma}[theorem]{Lemma}
\newtheorem{sublemma}[theorem]{Sublemma}
\newtheorem{conjecture}[theorem]{Conjecture}
\newtheorem{proposition}[theorem]{Proposition}
\newtheorem{corollary}[theorem]{Corollary}
\newtheorem*{claim}{Claim}

\theoremstyle{definition}
\newtheorem*{definition}{Definition}
\newtheorem{example}[theorem]{Example}
\newtheorem{remark}[theorem]{Remark}
\newtheorem{question}[theorem]{Question}

\theoremstyle{remark}
\newtheorem*{acknowledgement}{Acknowledgements}


\newenvironment{notation}[0]{%
  \begin{list}%
    {}%
    {\setlength{\itemindent}{0pt}
     \setlength{\labelwidth}{4\parindent}
     \setlength{\labelsep}{\parindent}
     \setlength{\leftmargin}{5\parindent}
     \setlength{\itemsep}{0pt}
     }%
   }%
  {\end{list}}

\newenvironment{parts}[0]{%
  \begin{list}{}%
    {\setlength{\itemindent}{0pt}
     \setlength{\labelwidth}{1.5\parindent}
     \setlength{\labelsep}{.5\parindent}
     \setlength{\leftmargin}{2\parindent}
     \setlength{\itemsep}{0pt}
     }%
   }%
  {\end{list}}
\newcommand{\Part}[1]{\item[\upshape#1]}

\def\Case#1#2{%
\paragraph{\textbf{\boldmath Case #1: #2.}}\hfil\break\ignorespaces}

\renewcommand{\a}{\alpha}
\renewcommand{\b}{\beta}
\newcommand{\g}{\gamma}
\renewcommand{\d}{\delta}
\newcommand{\e}{\epsilon}
\newcommand{\f}{\varphi}
\newcommand{\bfphi}{{\boldsymbol{\f}}}
\renewcommand{\l}{\lambda}
\renewcommand{\k}{\kappa}
\newcommand{\lhat}{\hat\lambda}
\newcommand{\m}{\mu}
\newcommand{\bfmu}{{\boldsymbol{\mu}}}
\renewcommand{\o}{\omega}
\renewcommand{\r}{\rho}
\newcommand{\rbar}{{\bar\rho}}
\newcommand{\s}{\sigma}
\newcommand{\sbar}{{\bar\sigma}}
\renewcommand{\t}{\tau}
\newcommand{\z}{\zeta}

\newcommand{\D}{\Delta}
\newcommand{\G}{\Gamma}
\newcommand{\F}{\Phi}
\renewcommand{\L}{\Lambda}

\newcommand{\ga}{{\mathfrak{a}}}
\newcommand{\gb}{{\mathfrak{b}}}
\newcommand{\gn}{{\mathfrak{n}}}
\newcommand{\gp}{{\mathfrak{p}}}
\newcommand{\gP}{{\mathfrak{P}}}
\newcommand{\gq}{{\mathfrak{q}}}

\newcommand{\Abar}{{\bar A}}
\newcommand{\Ebar}{{\bar E}}
\newcommand{\kbar}{{\bar k}}
\newcommand{\Kbar}{{\bar K}}
\newcommand{\Pbar}{{\bar P}}
\newcommand{\Sbar}{{\bar S}}
\newcommand{\Tbar}{{\bar T}}
\newcommand{\gbar}{{\bar\gamma}}
\newcommand{\lbar}{{\bar\lambda}}
\newcommand{\ybar}{{\bar y}}
\newcommand{\phibar}{{\bar\f}}

\newcommand{\Acal}{{\mathcal A}}
\newcommand{\Bcal}{{\mathcal B}}
\newcommand{\Ccal}{{\mathcal C}}
\newcommand{\Dcal}{{\mathcal D}}
\newcommand{\Ecal}{{\mathcal E}}
\newcommand{\Fcal}{{\mathcal F}}
\newcommand{\Gcal}{{\mathcal G}}
\newcommand{\Hcal}{{\mathcal H}}
\newcommand{\Ical}{{\mathcal I}}
\newcommand{\Jcal}{{\mathcal J}}
\newcommand{\Kcal}{{\mathcal K}}
\newcommand{\Lcal}{{\mathcal L}}
\newcommand{\Mcal}{{\mathcal M}}
\newcommand{\Ncal}{{\mathcal N}}
\newcommand{\Ocal}{{\mathcal O}}
\newcommand{\Pcal}{{\mathcal P}}
\newcommand{\Qcal}{{\mathcal Q}}
\newcommand{\Rcal}{{\mathcal R}}
\newcommand{\Scal}{{\mathcal S}}
\newcommand{\Tcal}{{\mathcal T}}
\newcommand{\Ucal}{{\mathcal U}}
\newcommand{\Vcal}{{\mathcal V}}
\newcommand{\Wcal}{{\mathcal W}}
\newcommand{\Xcal}{{\mathcal X}}
\newcommand{\Ycal}{{\mathcal Y}}
\newcommand{\Zcal}{{\mathcal Z}}

\renewcommand{\AA}{\mathbb{A}}
\newcommand{\BB}{\mathbb{B}}
\newcommand{\CC}{\mathbb{C}}
\newcommand{\FF}{\mathbb{F}}
\newcommand{\GG}{\mathbb{G}}
\newcommand{\NN}{\mathbb{N}}
\newcommand{\PP}{\mathbb{P}}
\newcommand{\QQ}{\mathbb{Q}}
\newcommand{\RR}{\mathbb{R}}
\newcommand{\ZZ}{\mathbb{Z}}

\newcommand{\bfa}{{\boldsymbol a}}
\newcommand{\bfb}{{\boldsymbol b}}
\newcommand{\bfc}{{\boldsymbol c}}
\newcommand{\bfd}{{\boldsymbol d}}
\newcommand{\bfe}{{\boldsymbol e}}
\newcommand{\bff}{{\boldsymbol f}}
\newcommand{\bfg}{{\boldsymbol g}}
\newcommand{\bfi}{{\boldsymbol i}}
\newcommand{\bfj}{{\boldsymbol j}}
\newcommand{\bfp}{{\boldsymbol p}}
\newcommand{\bfr}{{\boldsymbol r}}
\newcommand{\bfs}{{\boldsymbol s}}
\newcommand{\bft}{{\boldsymbol t}}
\newcommand{\bfu}{{\boldsymbol u}}
\newcommand{\bfv}{{\boldsymbol v}}
\newcommand{\bfw}{{\boldsymbol w}}
\newcommand{\bfx}{{\boldsymbol x}}
\newcommand{\bfy}{{\boldsymbol y}}
\newcommand{\bfz}{{\boldsymbol z}}
\newcommand{\bfA}{{\boldsymbol A}}
\newcommand{\bfF}{{\boldsymbol F}}
\newcommand{\bfB}{{\boldsymbol B}}
\newcommand{\bfD}{{\boldsymbol D}}
\newcommand{\bfG}{{\boldsymbol G}}
\newcommand{\bfI}{{\boldsymbol I}}
\newcommand{\bfM}{{\boldsymbol M}}
\newcommand{\bfP}{{\boldsymbol P}}
\newcommand{\bfzero}{{\boldsymbol{0}}}
\newcommand{\bfone}{{\boldsymbol{1}}}

\newcommand{\Aut}{\operatorname{Aut}}
\newcommand{\Berk}{{\textup{Berk}}}
\newcommand{\Birat}{\operatorname{Birat}}
\newcommand{\codim}{\operatorname{codim}}
\newcommand{\Crit}{\operatorname{Crit}}
\newcommand{\critwt}{\operatorname{critwt}} 
\newcommand{\diag}{\operatorname{diag}}
\newcommand{\Disc}{\operatorname{Disc}}
\newcommand{\Div}{\operatorname{Div}}
\newcommand{\Dom}{\operatorname{Dom}}
\newcommand{\End}{\operatorname{End}}
\newcommand{\Fbar}{{\bar{F}}}
\newcommand{\Fix}{\operatorname{Fix}}
\newcommand{\Gal}{\operatorname{Gal}}
\newcommand{\GITQuot}{/\!/}
\newcommand{\GL}{\operatorname{GL}}
\newcommand{\GR}{\operatorname{\mathcal{G\!R}}}
\newcommand{\Hom}{\operatorname{Hom}}
\newcommand{\Index}{\operatorname{Index}}
\newcommand{\Image}{\operatorname{Image}}
\newcommand{\Isom}{\operatorname{Isom}}
\newcommand{\hhat}{{\hat h}}
\newcommand{\Ker}{{\operatorname{ker}}}
\newcommand{\Lift}{\operatorname{Lift}}
\newcommand{\limstar}{\lim\nolimits^*}
\newcommand{\limstarn}{\lim_{\hidewidth n\to\infty\hidewidth}{\!}^*{\,}}
\newcommand{\Mat}{\operatorname{Mat}}
\newcommand{\maxplus}{\operatornamewithlimits{\textup{max}^{\scriptscriptstyle+}}}
\newcommand{\MOD}[1]{~(\textup{mod}~#1)}
\newcommand{\Mor}{\operatorname{Mor}}
\newcommand{\Moduli}{\mathcal{M}}
\newcommand{\Norm}{{\operatorname{\mathsf{N}}}}
\newcommand{\notdivide}{\nmid}
\newcommand{\normalsubgroup}{\triangleleft}
\newcommand{\NS}{\operatorname{NS}}
\newcommand{\onto}{\twoheadrightarrow}
\newcommand{\ord}{\operatorname{ord}}
\newcommand{\Orbit}{\mathcal{O}}
\newcommand{\Pcase}[3]{\par\noindent\framebox{$\boldsymbol{\Pcal_{#1,#2}}$}\enspace\ignorespaces}
\newcommand{\Per}{\operatorname{Per}}
\newcommand{\Perp}{\operatorname{Perp}}
\newcommand{\PrePer}{\operatorname{PrePer}}
\newcommand{\PGL}{\operatorname{PGL}}
\newcommand{\Pic}{\operatorname{Pic}}
\newcommand{\Prob}{\operatorname{Prob}}
\newcommand{\Proj}{\operatorname{Proj}}
\newcommand{\Qbar}{{\bar{\QQ}}}
\newcommand{\rank}{\operatorname{rank}}
\newcommand{\Rat}{\operatorname{Rat}}
\newcommand{\Resultant}{\operatorname{Res}}
\renewcommand{\setminus}{\smallsetminus}
\newcommand{\sgn}{\operatorname{sgn}} 
\newcommand{\shafdim}{\operatorname{ShafDim}}
\newcommand{\SL}{\operatorname{SL}}
\newcommand{\Span}{\operatorname{Span}}
\newcommand{\Spec}{\operatorname{Spec}}
\renewcommand{\ss}{\textup{ss}}
\newcommand{\stab}{\textup{stab}}
\newcommand{\Stab}{\operatorname{Stab}}
\newcommand{\Support}{\operatorname{Supp}}
\newcommand{\TableLoopSpacing}{{\vrule height 15pt depth 10pt width 0pt}} 
\newcommand{\tors}{{\textup{tors}}}
\newcommand{\Trace}{\operatorname{Trace}}
\newcommand{\trianglebin}{\mathbin{\triangle}} 
\newcommand{\tr}{{\textup{tr}}} 
\newcommand{\UHP}{{\mathfrak{h}}}    
\newcommand{\val}{\operatorname{val}} 
\newcommand{\wt}{\operatorname{wt}} 
\newcommand{\<}{\langle}
\renewcommand{\>}{\rangle}

\newcommand{\pmodintext}[1]{~\textup{(mod}~#1\textup{)}}
\newcommand{\ds}{\displaystyle}
\newcommand{\longhookrightarrow}{\lhook\joinrel\longrightarrow}
\newcommand{\longonto}{\relbar\joinrel\twoheadrightarrow}
\newcommand{\SmallMatrix}[1]{%
  \left(\begin{smallmatrix} #1 \end{smallmatrix}\right)}

\newcommand{\SD}{\operatorname{\mathcal{SD}}}  
\newcommand{\MD}{\operatorname{\mathcal{MD}}}  


\begin{abstract}
Let $K$ be a number field, let $S$ be a finite set of places of $K$, and let $R_S$ be the ring of $S$-integers of $K$.  A $K$-morphism $f:\mathbb{P}^1_K\to\mathbb{P}^1_K$ has simple good reduction outside $S$ if it extends to an $R_S$-morphism $\mathbb{P}^1_{R_S}\to\mathbb{P}^1_{R_S}$.  A finite Galois invariant subset $X\subset\mathbb{P}^1_K(\bar{K})$ has good reduction outside $S$ if its closure in $\mathbb{P}^1_{R_S}$ is \'etale over $R_S$.  We study triples $(f,Y,X)$ with $X=Y\cup f(Y)$.  We prove that for a fixed $K$, $S$, and $d$, there are only finitely many $\text{PGL}_2(R_S)$-equivalence classes of triples with $\text{deg}(f)=d$ and $\sum_{P\in Y}e_f(P)\ge2d+1$ and $X$ having good reduction outside $S$.  We consider refined questions in which the weighted directed graph structure on $f:Y\to X$ is specified, and we give an exhaustive analysis for degree $2$ maps on $\mathbb{P}^1$ when $Y=X$.
\end{abstract}

\maketitle

\vspace*{-40pt}  

\tableofcontents

\section{Introduction}
\label{section:introduction}

Let~$K$ be a number field, let~$S$ be a finite set of places of~$K$
including all archimedean places, and let~$R_S$ be the ring
of~$S$-integers of~$K$.  We recall that an abelian variety~$A/K$ is
said to have \emph{good reduction outside~$S$} if there exists a
proper $R_S$-group scheme $\Acal/R_S$ whose generic fiber is
$K$-isomorphic to~$A/K$. Then we have the following famous conjecture
of Shafarevich, which was proven by Shafarevich in dimension~$1$ and
by Faltings in general.

\begin{theorem}
[Faltings~\cite{MR861971}] There are only finitely $K$-isomorphism
classes of abelian varieties~$A/K$ having good reduction outside~$S$.
\end{theorem}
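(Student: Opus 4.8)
The plan is to deduce finiteness from two inputs: a Northcott-type finiteness for points of bounded height in a moduli space of finite type, together with a uniform bound on a suitable height of the abelian varieties in question. First I would reduce to principally polarized abelian varieties: by Zarhin's trick the variety $(A\times A^\vee)^4$ carries a principal polarization and has good reduction outside~$S$ whenever~$A$ does, so finiteness for principally polarized varieties of dimension~$8g$ yields finiteness for~$A$ of dimension~$g$ (recovering~$A$ as one of finitely many abelian subvarieties of the product). After a fixed finite extension of~$K$ I may also assume semistable reduction everywhere, so that the stable Faltings height $h(A)$ is well defined and insensitive to further base change.

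The moduli input is as follows. Fix an auxiliary prime~$\ell$ and a full level-$\ell$ structure; the corresponding moduli space $\mathcal{A}_{g,\ell}$ is a quasi-projective scheme of finite type over $\ZZ[1/\ell]$, and the Hodge bundle equips it with a projective height that differs from the Faltings height $h(A)$ by a bounded amount (this is the comparison between the archimedean contribution to~$h$ and a height on $\mathcal{A}_{g,\ell}$). Consequently, for fixed~$g$ and a fixed bound~$C$, there are only finitely many principally polarized abelian varieties over a fixed number field with $h(A)\le C$. Thus the whole problem is reduced to proving that $h(A)$ is bounded as~$A$ ranges over abelian varieties of dimension~$g$ with good reduction outside~$S$.

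To bound $h(A)$ I would argue in two stages. First, there are only finitely many isogeny classes: good reduction outside~$S$ forces the mod-$\ell$ representation on $A[\ell]$ to be unramified outside $S\cup\{\ell\}$ with bounded image, and Hermite--Minkowski produces only finitely many such Galois representations; combined with the semisimplicity of $T_\ell A$ and the Tate conjecture, the group $\Hom(T_\ell A, T_\ell B)$ detects isogenies, so only finitely many isogeny classes occur. Second, within a single isogeny class $h(A)$ takes only finitely many values. This is the crux: under an isogeny $A\to B$ of $\ell$-power degree the change in Faltings height is governed by the $p$-adic Hodge theory of the associated finite flat group schemes (Raynaud's theory and the Hodge--Tate decomposition), and one shows that the set of heights realized in the isogeny class is finite.

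The main obstacle is precisely this last step, the boundedness of the Faltings height along an isogeny class. It is here that the deepest inputs enter---the semisimplicity of the $\ell$-adic representation, the Tate conjecture on $\Hom$ of Tate modules, and the $p$-adic analysis of the variation of~$h$ under $\ell$-power isogenies---and in Faltings' argument these are not proved in sequence but are intertwined in a delicate bootstrap. Everything else (Zarhin's trick, semistable reduction, the Northcott property on $\mathcal{A}_{g,\ell}$, and Hermite--Minkowski) is comparatively formal; the genuine difficulty is extracting a uniform height bound from the local $p$-adic structure, which is the technical heart of the proof.
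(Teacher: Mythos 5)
You should first note a point of orientation: the paper does not prove this statement at all. It is quoted purely as background — Faltings' theorem, formerly the Shafarevich conjecture for abelian varieties, cited to \cite{MR861971} — and serves only as the motivation for the dynamical analogues that the paper actually proves. So there is no internal argument to compare yours against; the only meaningful comparison is with Faltings' own proof, whose architecture your outline does follow (Zarhin's trick, semistable reduction, the Faltings height, Northcott on the moduli space with level structure, Hermite--Minkowski, the Tate conjecture, and control of the height under isogeny).

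Judged as a proof, however, your proposal has a genuine gap, and you concede it yourself: every step that carries real content is named rather than proved. The two steps you defer — finiteness of isogeny classes, and finiteness of the set of Faltings heights within a single isogeny class — are not auxiliary lemmas; they \emph{are} the theorem. Moreover, as written your logical order cannot be repaired by filling in the steps one at a time. You invoke the semisimplicity of $T_\ell A$ and the Tate conjecture, i.e.\ $\Hom(A,B)\otimes\ZZ_\ell\cong\Hom_{\Gal(\Kbar/K)}(T_\ell A,T_\ell B)$, as inputs to the finiteness of isogeny classes; but over number fields these are themselves theorems of Faltings from the same paper, and they are \emph{deduced from} the height-under-isogeny bound (via Raynaud's results on finite flat group schemes and the computation of how the Hodge filtration moves under $\ell$-power isogenies), not available prior to it. The actual order is: bound the variation of $h$ under $\ell$-power isogenies, deduce Tate's conjecture and semisimplicity, then get finiteness of isogeny classes from the Hermite--Minkowski/Chebotarev argument on traces of Frobenius, and finally conclude by Northcott. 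The peripheral reductions in your first two paragraphs are correctly placed and comparatively formal, as you say; but a plan that stops at ``the technical heart'' is a roadmap of Faltings' proof, not a proof, and it could not be spliced into the paper as a replacement for the citation.
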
  

Our goal in this paper is to study an analogue of Shafarevich's
conjecture for dynamical systems on projective space.  The first
requirement is a definition of good reduction for self-maps
of~$\PP^N$, such as the
following~\cite{mortonsilverman:dynamicalunits}.

\begin{definition}
Let $f:\PP^N_{K}\to\PP^N_{K}$, be a non-constant
$K$-morphism. Then $f$ has (\emph{simple}) \emph{good reduction
  outside~$S$} if there exists an $R_S$-morphism
$\PP^N_{R_S}\to\PP^N_{R_S}$ whose generic fiber
is~$\PGL_{N+1}(K)$-conjugate to~$f$.
\end{definition}

If~$f$ has simple good reduction outside~$S$, and if $\f\in\PGL_{N+1}(R_S)$,
then it is clear that the conjugate map
\[
  f^\f := \f^{-1}\circ f\circ \f : \PP^N_K\longrightarrow\PP^N_K
\]
also has simple good reduction. But even modulo this equivalence, it is easy
to see that a dynamical analogue of Shafarevich's conjecture
using simple good reduction is false. For example, every map
$f:\PP_K^1\to\PP_K^1$ of the form
\[
  f(X,Y) = [X^d+a_1X^{d-1}Y+\cdots+a_dY^d,Y^d]
  \quad\text{with $a_i\in R_S$}
\]
has simple good reduction outside~$S$, and these maps represent
infinitely many~$\PGL_2(R_S)$-conjugacy classes.  And as noted
in~\cite[Example~4.1]{mortonsilverman:dynamicalunits}, there are also infinite
non-polynomial families such as
\[
  [aX^2+bXY+Y^2,X^2]\quad\text{with $a,b\in R_S$.}
\]
It is thus of interest to formulate alternative definitions of good
reduction for which a Shafarevich conjecture might hold in the
dynamical setting.  The literature contains several
papers~\cite{MR2999309,MR3293730,MR3273498,arxiv0603436} along these
lines.  We refer the reader to Section~\ref{section:earlier} for a
description of these earlier results and a comparison with the present
paper.

Our approach is to study pairs consisting of a map~$f$ and a set of
points~$Y\in\PP^N$ such that the map $f:Y\to f(Y)$ ``does not collapse''
when it is reduced modulo~$\gp$ for primes not in~$S$; see
Remark~\ref{remark:avtodyn} for a discussion of why this is a natural
analogue of the classical Shafarevich--Faltings result. To make this
precise, we need to define good reduction for sets of points.

\begin{definition}
Let $X\subset\PP^N(\Kbar)$ be a finite $\Gal(\Kbar/K)$-invariant
subset, say $X=\{P_1,\ldots,P_n\}$.  Then~$X$ has \emph{good reduction
  outside~$S$} if for every prime~$\gp\notin S$, and every prime~$\gP$
of~$K(P_1,\ldots,P_n)$ lying over~$\gp$, the reduction map\footnote{In
  scheme-theoretic terms, the set~$X$ is a reduced 0-dimensional
  $K$-subscheme of~$\PP^N_{K}$.  Let $\Xcal\subset\PP^N_{R_S}$ be the
  scheme-theoretic closure of~$X$.  Then~$X$ has good reduction
  outside~$S$ if~$\Xcal$ is \'etale over~$R_S$.}
\[
  X\longrightarrow \tilde X\bmod\gP\quad\text{is injective.}
\]
We observe that good reduction is preserved by the natural action of
$\PGL_{N+1}(R_S)$ on~$\PP^N(\Kbar)$.
\end{definition}

Our dynamical analogue of the Shafarevich--Faltings theorem is a
statement about triples~$(f,Y,X)$ consisting of a morphism~$f$ and
sets of points that have good reduction.  We restrict attention
to~$\PP^1$, since this is the setting for which we are currently able
to prove a strong Shafarevich-type theorem; but see
Section~\ref{section:generalizations} for a brief discussion of
possible extensions to~$\PP^N$ and why the naive generalization fails.

\begin{definition}
We define $\GR_d^1[n](K,S)$ to be the set of triples~$(f,Y,X)$,
where~$f:\PP^1_K\to\PP^1_K$ is a degree~$d$ morphism defined over~$K$
and $Y\subseteq X\subset\PP^1(\Kbar)$ are finite sets, satisfying the following
conditions:
\begin{parts}
  \Part{\textbullet}
  $X=Y\cup f(Y)$.
  \Part{\textbullet}
  $X$ is $\Gal(\Kbar/K)$-invariant.
  \Part{\textbullet}
  $\sum_{P\in Y} e_f(P)=n$, where~$e_f(P)$ is the ramification index of~$f$ at~$P$.
  \Part{\textbullet}
   $f$ and $X$ have good reduction outside $S$.
\end{parts}
We also define a potentially larger set $\widetilde{\GR}_d^1[n](K,S)$
by dropping the requirement that~$f$ has good reduction.  We observe
that if $Y=X$, then the points in~$X$ have finite~$f$-orbits, in which
case we say that~$(f,X,X)$ is a \emph{preperiodic triple}.
\end{definition}

There is a natural action of $\PGL_{N+1}(R_S)$ on $\GR_d^N[n](K,S)$
and on $\widetilde{\GR}_d^1[n](K,S)$ given by
\[
  \f\cdot(f,Y,X) := \bigl( f^\f, \f^{-1}(Y), \f^{-1}(X)\bigr).
\]
Our dynamical Shafarevich-type theorem for~$\PP^1$ says that if~$n$ is
sufficiently large, then $\widetilde{\GR}_d^1[n](K,S)$ has only
finitely many~$\PGL_{N+1}(R_S)$-orbits.

\begin{theorem}
[Dynamical Shafarevich Theorem for $\PP^1$]  
\label{theorem:shafconjP1}
Let $d\ge2$.
\begin{parts}
\Part{(a)}
Let~$K/\QQ$ be a number field, and let~$S$ be a finite set of places
of~$K$. Then for all $n\ge2d+1$, the set 
\[
  \smash[t]{\widetilde{\GR}}_d^1[n](K,S)/\PGL_{2}(R_S)~\text{is finite.}
\]
\Part{(b)}
Let $S$ be the set of rational primes dividing $(2d-2)!$.
Then
\[
  \GR_d^1[2d](\QQ,S)/\PGL_{2}(\ZZ_S)~\text{is infinite.}
\]
Indeed, there are infinitely many $\PGL_{2}(\ZZ_S)$-equivalence
classes of preperiodic triples~$(f,X,X)$ in $\GR_d^1[2d](\QQ,S)$.
\end{parts}
\end{theorem}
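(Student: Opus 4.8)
The plan is to prove~(a) by splitting each triple into a finite amount of discrete combinatorial data together with a point configuration that is rigid up to $\PGL_2(R_S)$, and then to show that once everything discrete is fixed, the map~$f$ is \emph{uniquely} determined. First I would record two size bounds. Since $e_f(P)\le d$ for every~$P$, the hypothesis $n\ge 2d+1$ forces $|Y|\ge n/d>2$, hence $|Y|\ge3$ and $|X|\ge3$; and since $e_f(P)\ge1$ we also have $|Y|\le n$, so $3\le|X|\le 2|Y|\le 2n$. Thus $|X|$ takes only finitely many values. For each fixed~$m$ with $3\le m\le 2n$, the set~$X$ is a $\Gal(\Kbar/K)$-invariant $m$-element subset of~$\PP^1(\Kbar)$ with good reduction outside~$S$, and there are only finitely many $\PGL_2(R_S)$-equivalence classes of such configurations; I would isolate this as a preliminary lemma. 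It ultimately rests on the finiteness of solutions of the $S$-unit equation, since the cross-ratios of four points of~$X$, together with their complements, are $S$-units for the field generated by~$X$ (the requisite field-of-definition bookkeeping is standard). Fixing one representative~$X$ per class, there are then finitely many subsets $Y\subseteq X$, finitely many set maps $\psi=f|_Y:Y\to X$, and finitely many ramification functions $e:Y\to\{1,\dots,d\}$, all because the underlying sets are finite and the indices are bounded by~$d$.

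The crux is the following rigidity statement, and this is exactly where $n\ge2d+1$ is essential: given discrete data $(Y,\psi,e)$ with $\sum_{P\in Y}e(P)=n\ge2d+1$, there is at most one degree-$d$ morphism~$f$ with $f|_Y=\psi$ and $e_f(P)=e(P)$ for all~$P$. To prove it, write $f=[F:G]$ and $\tilde f=[\tilde F:\tilde G]$ for two such maps, with $F,G$ and $\tilde F,\tilde G$ coprime binary forms of degree~$d$, and consider the form $H:=F\tilde G-\tilde F G$, of degree $\le 2d$. For $P\in Y$ with $\psi(P)=Q=[q_0:q_1]$, both $q_1F-q_0G$ and $q_1\tilde F-q_0\tilde G$ vanish to order $e(P)$ at~$P$, since the fibre $f^{-1}(Q)$ is the zero divisor of $q_1F-q_0G$. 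A short local computation, immediate after moving~$Q$ to~$\infty$ (the answer is then independent of the target coordinate because~$H$ is semi-invariant under $\PGL_2$ acting on the target), shows that~$H$ vanishes to order $\ge e(P)$ at~$P$. Hence~$H$ vanishes to total order $\ge\sum_{P\in Y}e(P)=n\ge 2d+1>2d\ge\deg H$, forcing $H\equiv0$, i.e. $F/G=\tilde F/\tilde G$ and $f=\tilde f$. Combined with the finiteness of the discrete data and of the configurations~$X$, this proves~(a); and since the argument never uses good reduction of~$f$, it applies to the larger set $\widetilde{\GR}_d^1[n](K,S)$.

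For~(b) I would exhibit an explicit infinite family. Take $f_c(z)=z^{d}+c\,z=z\,(z^{d-1}+c)$ with $c\in\ZZ_S^{*}$, defined over~$\QQ$, and set $X=f_c^{-1}(0)\cup f_c^{-1}(\infty)=\{0,\infty\}\cup\{\text{roots of }z^{d-1}+c\}$ and $Y=X$. Then $f_c(X)=\{0,\infty\}\subseteq X$, so $(f_c,X,X)$ is preperiodic with $X=Y\cup f_c(Y)$, and~$X$ is Galois-stable since $f_c\in\QQ[z]$. The fibres over~$0$ and over~$\infty$ each contribute~$d$ to the ramification, so $\sum_{P\in X}e_{f_c}(P)=2d$. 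Because $f_c$ is monic its resultant is a unit, so $f_c$ has good reduction outside~$S$; and~$X$ has good reduction once~$c$ and $\Disc(z^{d-1}+c)=\pm(d-1)^{d-1}c^{d-2}$ are $S$-units, which holds because $c\in\ZZ_S^{*}$ and every prime dividing~$d-1$ divides $(2d-2)!$ and so lies in~$S$. This is precisely where the choice $S=\{p:p\mid(2d-2)!\}$ enters. Finally, the multiset of multipliers at the fixed points is a $\PGL_2$-conjugacy invariant, and $c=f_c'(0)$ is the multiplier at the fixed point~$0$; hence for each conjugacy class only the $\le d+1$ values of~$c$ lying in that multiplier multiset can occur, so the assignment of~$c$ to the class of $(f_c,X,X)$ has fibres of size $\le d+1$. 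Since $\ZZ_S^{*}$ is infinite (as $2\in S$), this produces infinitely many $\PGL_2(\ZZ_S)$-classes, proving~(b).

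The main obstacle is the rigidity lemma of the second paragraph: the rest is bookkeeping together with a citation to $S$-unit finiteness, but the passage from ``enough ramification'' to ``$f$ is determined'' is the heart of the matter. The degree count $\deg H\le 2d$ against total vanishing $\ge n$ is exactly what makes $n\ge2d+1$ the sharp threshold, a sharpness confirmed by the $n=2d$ family constructed in~(b).
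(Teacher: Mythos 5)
Your proposal is correct in both parts, and for (a) it follows the paper's skeleton exactly—finiteness of good-reduction configurations $X$ up to $\PGL_2(R_S)$, finiteness of the discrete data $\bigl(Y,f|_Y,e_f|_Y\bigr)$, then a rigidity lemma—so the interesting differences lie in how the two key ingredients are handled. For rigidity, the paper (Lemma~\ref{lemma:interpratfnc}) argues by intersection theory on $\PP^1\times\PP^1$: the divisor $\G_{f,g}=(f\times g)_*\D$ has class $dH_1+dH_2$, hence $\G_{f,g}\cdot\D=2d$, while each agreement point contributes at least $\min\bigl\{e_f(P),e_g(P)\bigr\}$ locally, and the excess forces $|\G_{f,g}|=|\D|$, i.e.\ $f=g$. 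Your determinant form $H=F\tilde G-\tilde FG$ proves the same sharp inequality using only the fact that a nonzero binary form of degree $2d$ has at most $2d$ zeros; your remark that $H$ is semi-invariant under target coordinate changes (it scales by the determinant) is exactly what makes the local order well defined, and the computation $\ord_P H\ge e(P)$ checks out after moving $\psi(P)$ to $\infty$. This is a genuinely more elementary route to the crux of the theorem. One caveat: the configuration-finiteness statement you defer to ``standard bookkeeping'' is where a substantial portion of the paper's work lives (Lemma~\ref{lemma:XP1finite} with Sublemma~\ref{lemma:fPGL2fX0}, including Hermite--Minkowski and the descent identity $\PGL_2(K)\cap\PGL_2(R_{S'})=\PGL_2(R_S)$); the result is indeed well known, so this is a citation matter rather than a gap, but a complete write-up must supply it. For (b), your family $f_c(z)=z(z^{d-1}+c)$, $c\in\ZZ_S^*$, with $X=\{0,\infty\}\cup f_c^{-1}(0)$ differs from the paper's family $f_a(x)=ax(x-1)\cdots(x-d+1)/\bigl((x+1)\cdots(x+d-1)\bigr)$, which has $2d$ unramified marked points; yours instead concentrates weight $d$ at the totally ramified fixed point $\infty$. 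Your verifications are correct—the weight is $1+(d-1)+d=2d$, monicity makes $\Resultant(f_c)$ a unit, and $\Disc(z^{d-1}+c)=\pm(d-1)^{d-1}c^{d-2}\in\ZZ_S^*$ precisely because every prime dividing $d-1$ divides $(2d-2)!$—and you finish with the same fixed-point-multiplier argument (here $c=\l(f_c,0)$, so each conjugacy class absorbs at most $d+1$ values of $c$) that the paper uses. Both constructions prove (b); the paper's has the incidental advantage that its all-weight-one portrait reappears in the later degree-$2$ analysis (as $\Pcal_{4,11}$), while yours is somewhat quicker to check.
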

\begin{proof}
See Section~\ref{section:pfdynshafP1} for the proof of~(a), and see
Section~\ref{section:pfdynshafP1wt2d}, specifically
Proposition~\ref{proposition:fawt2d}, for the proof of~(b).
\end{proof}

In some sense, Theorem~\ref{theorem:shafconjP1} is the end of the
story for~$\PP^1$, since it says:
\begin{quotation}
``The Dynamical Shafarevich Conjecture is true for sets of weight at
  least~$2d+1$, but it is not true for sets of smaller weight.''
\end{quotation}
However, rather than merely specifying the total weight, we might
consider the weighted graph structure that $f:Y\to X$ imposes on~$X$,
where each point~$P\in Y$ is assigned an outgoing arrow $P\to f(P)$ of
weight~$e_f(P)$.  In dynamical parlance, we want to classify
triples~$(f,Y,X)$ according to their \emph{portrait
  structure}.\footnote{Portrait structures, especially on critical
  point orbits, are important tools in the study of complex dynamics
  on $\PP^1(\CC)$; see for example~\cite{arxiv1408.2118}.} 
The following (unweighted) example of a portrait illustrates the general idea:
\[
  \xymatrix{
    \Pcal: & {\bullet} \ar@(dr,ur)[]_{} \\
  }
  \qquad
  \xymatrix{
    {\bullet} \ar[r]_{} & {\bullet}  \\
  }
  \qquad
  \xymatrix{
    {\bullet} \ar@(dl,dr)[r]_{}   & {\bullet}   \ar@(ur,ul)[l]_{}  \\
  }
\]
A model for this portrait~$\Pcal$ is a triple~$(f,Y,X)$
with $Y=\{P_1,P_2,P_4,P_5\}$ and  $X=\{P_1,P_2,P_3,P_4,P_5\}$ satisfying:
\begin{parts}
  \Part{\textbullet}
  $P_1$ is a fixed  point of $f$.
  \Part{\textbullet}
  $f(P_2)=P_3$.
  \Part{\textbullet}
  $P_4$ and $P_5$ form a periodic 2-cycle for $f$.
\end{parts}
If each point~$P\in\Pcal$ is assigned a weight~$\e(P)$, then we might
further require that $e_f(P)=\e(P)$, although there are other natural
possibilities. Indeed, we consider three ways to define good reduction
for dynamical systems and weighted portraits. We start with the
largest set and work our way down:

\begin{definition}
Let $\Pcal$ be a weighted portrait.
We define $\GR_d^1[\Pcal](K,S)$ to be the set of triples~$(f,Y,X)$,
where~$f:\PP^1_K\to\PP^1_K$ is a degree~$d$ morphism defined over~$K$
and $Y\subseteq X\subset\PP^1(\Kbar)$ are finite sets, satisfying the following
conditions:
\begin{parts}
  \Part{\textbullet}
  $X=Y\cup f(Y)$ and $f:Y\to X$ looks like~$\Pcal$ (ignoring the weights).
  \Part{\textbullet}
  $X$ is $\Gal(\Kbar/K)$-invariant.
  \Part{\textbullet}
   $f$ and $X$ have good reduction outside $S$.
\end{parts}
We then define three subsets of $\GR_d^1[\Pcal](K,S)$ by imposing the
following additional conditions on the triple~$(f,Y,X)$ that reflect
the weights assigned by~$\Pcal$:\footnote{We note that~$\star$-good
  reduction was first defined and studied by Petsche and
  Stout~\cite{MR3293730}, specifically for~$d=2$ and~$\Pcal$
  consisting of two fixed points or one $2$-cycle.}
\begin{align*}
  \GR_d^1[\Pcal]^\bullet(K,S) :\quad &\text{$e_f(P)\ge\e(P)$ for all $P\in Y$.} \\
  \GR_d^1[\Pcal]^\circ(K,S) :\quad &\text{$e_f(P)=\e(P)$ for all $P\in Y$.} \\
  \GR_d^1[\Pcal]^\star(K,S) :\quad &\text{$e_{\tilde f}(\tilde P\bmod\gp)=\e(P)$ for all $P\in Y$ and all $\gp\notin S$.} 
\end{align*}
\end{definition}

We refer the reader to Section~\ref{section:portraits} for rigorous
definitions of portraits, both weighted and unweighted, and their
models.  See also the companion paper~\cite{moduliportrait2017} in
which we construct parameter spaces and moduli spaces for dynamical
systems with portraits via geometric invariant theory and study some
of their geometric and arithmetic properties.

This leads to fundamental questions:

\begin{question}
\label{question:whichPareS}
For a given $d\ge2$, classify the portraits~$\Pcal$ having
the property that for all number fields~$K$ and all finite sets of
places~$S$, the set
\[
  \GR_d^N[\Pcal]^x(K,S)/\PGL_{2}(R_S)~\text{is finite, where $x\in\{\bullet,\circ,\star\}$.}
\]
If~$\Pcal$ has this property, then we say that~$\Pcal$ is an
\emph{$(x,d)$-Shafarevich portrait}, or that \emph{$(x,d)$-Shafarevich
  finiteness holds for~$\Pcal$}.
\end{question}

For example, Theorem~\ref{theorem:shafconjP1}(a) says that if the
total weight of the points in~$\Pcal$ is at least~$2d+1$, then
$(\bullet,d)$-Shafarevich finiteness holds for~$\Pcal$. This is quite
satisfactory. But the converse result, which is
Theorem~\ref{theorem:shafconjP1}(b), says only there exists at least
one portrait of total weight~$2d$ such that $(\bullet,d)$-Shafarevich
finiteness fails for~$\Pcal$. It says nothing about the full set of
such portraits.  And indeed, we will prove that among the
many portraits of total weight~$4$, $(\bullet,2)$-Shafarevich finiteness
holds for some and not for others!  Thus the answer to
Question~\ref{question:whichPareS} appears to be fairly subtle for
portraits of weight at most~$2d$.

In those cases that~$\GR_d^1[\Pcal]^x(K,S)$ is infinite, we might ask
for a more refined measure of its size. This is provided by looking at
its image in the moduli space $\Moduli_d^1$, where
$\Moduli_d^1:=\End_d^1\GITQuot\SL_{2}$ is the moduli space of
dynamical systems of degree~$d$ morphisms on~$\PP^1$.
(See~\cite{milnor:quadraticmaps, silverman:modulirationalmaps}
for the construction of~$\Moduli_d^1$,
and~\cite{MR2741188, MR2567424} for an analogous construction
for~$\PP^N$.)  This prompts the following definition.

\begin{definition}
Let $d\ge2$, let $x\in\{\bullet,\circ,\star\}$, and let~$\Pcal$ be a
portrait. The \emph{$(x,d)$-Shafarevich dimension of~$\Pcal$} is the
quantity
\[
  \shafdim_d^1[\Pcal]^x
  =\sup_{\substack{\text{$K$ a number field}\\\text{$S$ a finite set of places}\\}}
  \dim \overline{\operatorname{Image} \Bigl( \GR_d^1[\Pcal]^x(K,S) \to \Moduli_d^1 \Bigr)},
\]
where the overline denotes the Zariski closure.
\end{definition}

By definition, we have
\[
  \text{$\Pcal$ is a $(x,d)$-Shafarevich portrait}
  \quad\Longrightarrow\quad \shafdim_d^1[\Pcal]^x=0.
\]
A natural generalization of Question~\ref{question:whichPareS} is to
ask for a formula (or algorithm, or~\dots) for $\shafdim_d^1[\Pcal]^x$
as a function of~$\Pcal$.

In this paper we start to answer this refined question by performing
an exhaustive computation of $\shafdim_2^1[\Pcal]^x$ for preperiodic
portraits of weight up to~$4$, since
Theorem~\ref{theorem:shafconjP1}(a) says that the dimension is~$0$ for
portraits whose weight is strictly greater than~$4$.

To partially illustrate the complete results that are given in
Section~\ref{section:wt4deg2P1}, we refer the reader to
Table~\ref{table:wt4deg2P1examples}.  This table lists eight
preperiodic portraits of weight~$4$ that arise for degree~$2$ maps
of~$\PP^1$.  For six of them, the $(\bullet,2)$-Shafarevich finiteness
property holds, while for two of them it does not. It is not clear (to
this author) how to distinguish this dichotemy directly from the
geometry of the portraits, other than by performing a detailed
analysis.  It turns out that there are~$34$ possible portraits of
weight at most~$4$ for degree~$2$ maps of~$\PP^1$. See
Section~\ref{section:wt4deg2P1} for an analysis of all~$34$ portraits
and a computation of their various Shafarevich dimensions.

\begin{table}[t]   
\[
\begin{array}{|c|c|c|c|c|c|} \hline
  \Pcal & \text{$(1,2)$-Shafarevich Finiteness True?} \\ \hline\hline
  \xymatrix{ {\bullet} \ar[r]_{} & {\bullet}  \ar@(dr,ur)[]_{} } \quad
  \xymatrix{ {\bullet}  \ar@(dr,ur)[]_{} } \quad
  \xymatrix{ {\bullet}  \ar@(dr,ur)[]_{} } & \text{YES} \TableLoopSpacing\\ \hline
  \xymatrix{ {\bullet} \ar[r]_{} & {\bullet}  \ar@(dr,ur)[]_{} } \quad
  \xymatrix{ {\bullet} \ar[r]_{} & {\bullet}  \ar@(dr,ur)[]_{} } & \text{NO} \TableLoopSpacing \\ \hline
  \xymatrix{ {\bullet} \ar[r]_{} & {\bullet} \ar[r]_{} & {\bullet}  \ar@(dr,ur)[]_{} } \quad
  \xymatrix{  {\bullet}  \ar@(dr,ur)[]_{} }  & \text{YES} \TableLoopSpacing\\ \hline
  \xymatrix{ {\bullet} \ar[r] &  {\bullet} \ar[r] &  {\bullet} \ar[r] & {\bullet}  \ar@(dr,ur)[]_{} }
  & \text{YES}\TableLoopSpacing \\ \hline
\hline
  \xymatrix{ {\bullet}  \ar@(dr,ur)[]_{}} \quad
  \xymatrix{ {\bullet}  \ar@(dr,ur)[]_{}} \quad
  \xymatrix{
     {\bullet} \ar@(dl,dr)[r]_{}   & {\bullet}   \ar@(ur,ul)[l]_{} \\ } & \text{YES} \\ \hline
  \xymatrix{ {\bullet} \ar[r] & {\bullet}  \ar@(dr,ur)[]_{} } \quad
  \xymatrix{
     {\bullet} \ar@(dl,dr)[r]_{}   & {\bullet}   \ar@(ur,ul)[l]_{} \\
  }  & \text{NO} \\ \hline
  \xymatrix{ {\bullet}  \ar@(dr,ur)[]_{} } \quad
  \xymatrix{
     {\bullet} \ar[r] &  {\bullet} \ar@(dl,dr)[r]_{}   & {\bullet}   \ar@(ur,ul)[l]_{} \\
  }  & \text{YES} \\ \hline
  \xymatrix{
    {\bullet} \ar[r]_{} & {\bullet} \ar[r]_{} &
     {\bullet} \ar@(dl,dr)[r]_{}   & {\bullet}   \ar@(ur,ul)[l]_{} \\
  }  & \text{YES} \\ \hline
\end{array}
\]
\caption{Some weight $4$ portraits for degree $2$ maps}
\label{table:wt4deg2P1examples} 
\end{table}


We can also turn the question around by fixing~$\Pcal$ and
letting~$d\to\infty$.  We note that the Shafarevich dimension is never
more than $\dim\Mcal_d^1=2d-2$.

\begin{question}
\label{question:shafdisc}
For a given unweighted portrait $\Pcal$, what is the limiting behavior
of the \emph{Shafarevich discrepency}\footnote{If~$\Pcal$ has
  weights~$\e$, it is more natural quantity to consider the quantity
  $2d-2-\sum_{P\in Y}\bigl(\e(P)-1\bigr) - \shafdim_d^1[\Pcal]^x$ for
  $x\in\{\bullet,\circ,\star\}$.}
\[
  2d - 2 - \shafdim_d^1[\Pcal]\quad\text{as $d\to\infty$?}
\]
\end{question}

We note that Question~\ref{question:shafdisc} is quite interesting
even for $\Pcal=\emptyset$. We will show in
Proposition~\ref{proposition:simpledN1} that
\[
  d \le \shafdim_d^1[\emptyset] \le 2d-2.
\]
This gives the exact value for~$d=2$, a result that is also proven
in~\cite{MR3293730} using a a slightly different argument.

\begin{remark}
\label{remark:avtodyn}  
Returning to the case of abelian varieties for motivation and
inspiration, we note that an abelian variety is really a
pair~$(A,\Ocal)$ consisting of a variety and a marked point.
As noted by Petsche and Stout~\cite{MR3293730}, if we discard the
marked point, then Shafarevich finiteness is no longer true.
For example, there may be infinitely many $K$-isomorphism
classes of curves of genus~$1$ having good reduction outside~$S$.
Hence in order to prove Shafarevich finiteness for a collection
of geometric object (varieties, maps, etc.), it is very natural to
add level structure in the form of one or more points.
We also remark that if we add further level structure to an abelian
variety, for example specifying an $n$-torsion point~$Q$,
then an ostensibly stronger form of good reduction would require
that the points~$Q$ and~$\Ocal$ remain distinct modulo the primes
not in~$S$. But if we enlarge~$S$ so that~$n\in R_S^*$, then
the two forms of good reduction are actually identical
due to the standard result on injectivity of torsion under reduction;
cf.\ \cite[Theorem~C.1.4]{hindrysilverman:diophantinegeometry}
or~\cite[Appendix~II, Corollary~1]{MR0282985}. To make
the dynamical analogy complete, we  note that torsion
points are exactly the points of~$A$ that are preperiodic for the
doubling map.
\end{remark}

\section{Earlier results}
\label{section:earlier}
It has long been realized that dynamical Shafarevich finiteness does
not hold for morphisms $f:\PP^1\to\PP^1$ if the definition of good
reduction is simple good reduction;
cf.\ \cite[Example~4.1]{mortonsilverman:dynamicalunits}.  This has
led a number of authors to impose additional good reduction
conditions on~$f$ and to prove a variety of finiteness theorems. We
briefly mention a few of these results.

Closest in spirit to the present paper is work of Petsche and
Stout~\cite{MR3293730} in which they study good reduction of
degree~$2$ maps of~$\PP^1$.  They define (with similar notation) the
sets that we've denoted by~$\GR_d^1(K,S)[\emptyset]$ and they pose the
question of whether the maps in this set are Zariski dense in the
moduli space~$\Moduli_d^1$. They prove that this is true for~$d=2$,
which is a special case of our
Proposition~\ref{proposition:simpledN1}.  They also study maps with
$\star$-good reduction relative to various portraits, i.e., the
sets~$\GR_d^1[\Pcal]^\star$ defined earlier.  For example, they prove
that $\shafdim_2^1[\Pcal]^\star=1$ when~$\Pcal$ is a portrait
consisting of two unramified fixed points, and similarly when~$\Pcal$
is a portrait consisting of a single unramified~$2$-cycle.  (These are
the portraits labeled~$\Pcal_{2,3}$ and $\Pcal_{2,4}$ in
Table~\ref{table:wt3deg2P1}.)  We will show later
that~$\shafdim_2^1[\Pcal]^\circ=1$ and~$\shafdim_2^1[\Pcal]^\bullet=2$ for
these two portraits.  More generally, in
Section~\ref{section:wt4deg2P1} we compute the three Shafarevich
dimensions for the~34 preperiodic portraits of weight at most~4 for
degree~2 maps of~$\PP^1$.

Other approaches to a dynamical Shafarevich conjecture also consider
pairs~$(f,X)$ or triples~$(f,Y,X)$ of maps and points, but impose
different function-theoretic constraints. Thus Szpiro and
Tucker~\cite{arxiv0603436}, and later with
West~\cite{SzpiroFieldsTalk2017}, classify maps according to what
Szpiro characterizes as ``differential good reduction.''  For a given
map~$f:\PP^1\to\PP^1$, let $\Rcal(f)$ denote the set of ramified
points of~$f$ and let~$\Bcal(f)=f\bigl(\Rcal(f)\bigr)$ denote the set
of branch points.\footnote{In dynamical terminology,~$\Rcal(f)$ is the set of
\emph{critical points} and~$\Bcal(f)$ is the set of \emph{critical values.}}

\begin{definition}
The map $f$ has \emph{critical good reduction} outside~$S$ if each of
the sets~$\Rcal(f)$ and~$\Bcal(f)$ has good reduction outside~$S$.
The map $f$ has \emph{critical excellent reduction} outside~$S$ if the
union~$\Rcal(f)\cup\Bcal(f)$ has good reduction outside~$S$.
\end{definition}

Canci, Peruginelli, and Tossici~\cite{arxiv1103.3853} prove that~$f$
has critical good reduction if and only if~$f$ has simple good
reduction and the branch locus~$\Bcal(f)$ has good reduction.

\begin{theorem}
\label{theorem:STW}
\textup{(Szpiro--Tucker--West \cite{SzpiroFieldsTalk2017})}
Fix a number field~$K$, a finite set of places~$S$, and an
integer~$d\ge2$.  Then up to~$\PGL_2(K)$-conjugacy, there are only
finitely many degree~$d$ maps~$f:\PP^1_K\to\PP^1_K$ that are ramified
at~$3$ or more points and have critically good reduction outside~$S$.
\end{theorem}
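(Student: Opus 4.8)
The plan is to manufacture a triple out of the critical data of~$f$ and feed it into the Dynamical Shafarevich Theorem, Theorem~\ref{theorem:shafconjP1}(a). Concretely, take $Y=\Rcal(f)$, the set of critical points, which is $\Gal(\Kbar/K)$-invariant because~$f$ is defined over~$K$. Then $f(Y)=\Bcal(f)$ and $X:=Y\cup f(Y)=\Rcal(f)\cup\Bcal(f)$. Since all ramification of~$f$ is concentrated on~$\Rcal(f)$, the Riemann--Hurwitz formula gives $\sum_{P\in\Rcal(f)}\bigl(e_f(P)-1\bigr)=2d-2$, and therefore the weight of this triple is
\[
  n=\sum_{P\in Y}e_f(P)=(2d-2)+|\Rcal(f)|\ge(2d-2)+3=2d+1,
\]
where the inequality is exactly the hypothesis that~$f$ is ramified at three or more points. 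Thus the weight lands precisely on the threshold of Theorem~\ref{theorem:shafconjP1}(a).

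First I would dispose of the case of \emph{critical excellent reduction}, i.e.\ when the union $\Rcal(f)\cup\Bcal(f)$ itself has good reduction outside~$S$. There the triple $(f,\Rcal(f),\Rcal(f)\cup\Bcal(f))$ lies in $\widetilde{\GR}_d^1[n](K,S)$ with $n\ge2d+1$, so Theorem~\ref{theorem:shafconjP1}(a) puts these triples into finitely many $\PGL_2(R_S)$-orbits. Since the assignment $f\mapsto(f,\Rcal(f),\Rcal(f)\cup\Bcal(f))$ is injective and intertwines conjugation with the $\PGL_2$-action on triples, the corresponding maps~$f$ fall into finitely many $\PGL_2(R_S)$-conjugacy classes, hence a fortiori finitely many $\PGL_2(K)$-conjugacy classes. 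Note that one does not even need good reduction of~$f$ here; the tilde version of the Shafarevich set suffices.

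\textbf{The main obstacle} is that critically good reduction is strictly \emph{weaker} than critically excellent reduction: each of~$\Rcal(f)$ and~$\Bcal(f)$ reduces without collision, but a critical point and a critical value may coincide modulo some $\gp\notin S$, so $X=\Rcal(f)\cup\Bcal(f)$ need not have good reduction and the triple need not lie in~$\widetilde{\GR}_d^1[n](K,S)$. To bridge this gap I would avoid the union and control~$\Rcal(f)$ and~$\Bcal(f)$ \emph{separately}. Since $|\Rcal(f)|\ge3$, the good reduction of~$\Rcal(f)$ keeps its points distinct modulo every $\gp\notin S$, so the cross-ratios of quadruples from~$\Rcal(f)$ are $S$-units; by the $S$-unit equation together with Hermite--Minkowski finiteness of extensions of bounded degree unramified outside~$S$, the set~$\Rcal(f)$ lies in one of finitely many $\PGL_2$-configurations, and likewise for~$\Bcal(f)$. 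I would then conjugate by $\PGL_2(K)$ so that three of the critical points sit at $0,1,\infty$, pinning~$\Rcal(f)$ to a finite list of $S$-integral configurations and cutting the residual conjugation freedom to a finite group.

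It remains to rigidify~$f$ itself, and this is where I expect the real work to lie. The critical divisor determines~$f$ up to post-composition by~$\PGL_2$ and up to finitely many choices (Riemann's existence theorem on rational maps with prescribed critical points), so I may write $f=\beta\circ f_i$ for one of finitely many model maps~$f_i$ and some $\beta\in\PGL_2$. I would bound~$\beta$ by combining two inputs: the theorem of Canci--Peruginelli--Tossici~\cite{arxiv1103.3853}, by which critically good reduction forces~$f$ to have simple good reduction, giving an $S$-integral normal form with unit resultant outside~$S$ and thereby controlling~$f$ at every finite $\gp\notin S$; and the finiteness of configurations of $\Bcal(f)=\beta\bigl(\Bcal(f_i)\bigr)$, which constrains~$\beta$ modulo the subgroup of~$\PGL_2$ fixing~$\Bcal(f)$. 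The delicate point is that this subgroup is finite only when $|\Bcal(f)|\ge3$; when $|\Bcal(f)|\le2$ (which can occur once $d\ge4$, with several critical points lying over two branch points) the branch configuration alone does not rigidify~$\beta$, and one must genuinely use the good reduction of~$f$ to kill the residual one-parameter freedom. Granting a resulting bound on the height of~$\beta$, equivalently on the moduli height of~$f$ in~$\Moduli_d^1$, Northcott's theorem yields finitely many conjugacy classes over the field where we normalized, and a final twisting argument in Galois cohomology, using that the automorphism group of~$f$ is finite and unramified outside~$S$, descends this to finiteness up to $\PGL_2(K)$-conjugacy.
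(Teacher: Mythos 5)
Your first two paragraphs are precisely the paper's proof, which treats Theorem~\ref{theorem:STW} as an immediate corollary of Theorem~\ref{theorem:shafconjP1}(a): one associates to $f$ the triple $\bigl(f,\Rcal(f),\Rcal(f)\cup\Bcal(f)\bigr)$, computes the weight
\[
  n=\sum_{P\in\Rcal(f)}e_f(P)=\sum_{P\in\Rcal(f)}\bigl(e_f(P)-1\bigr)+\#\Rcal(f)=2d-2+\#\Rcal(f)\ge 2d+1,
\]
and applies Theorem~\ref{theorem:shafconjP1}(a) to $\widetilde{\GR}_d^1[n](K,S)$, noting (as you do) that good reduction of $f$ itself is never needed and that finiteness of $\PGL_2(R_S)$-orbits gives \emph{a fortiori} finiteness of $\PGL_2(K)$-conjugacy classes. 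So up to that point you and the paper coincide.

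Your ``main obstacle'' is a genuine observation, and it is in fact a gap in the paper's own one-line deduction rather than an artifact of your argument. With the paper's literal definition of critical good reduction (each of $\Rcal(f)$ and $\Bcal(f)$ separately reduces injectively), the union need not have good reduction, so the triple need not lie in $\widetilde{\GR}_d^1[n](K,S)$. A concrete example: take $K=\QQ$, $S=\{2,\infty\}$, and $f(x)=x^3-3x$. Then $\Rcal(f)=\{1,-1,\infty\}$ and $\Bcal(f)=\{2,-2,\infty\}$, so $f$ is ramified at three points; $f$ is monic with unit resultant, hence has simple good reduction at all finite primes, and each of $\Rcal(f)$, $\Bcal(f)$ reduces injectively outside $S$, so $f$ has critical good reduction outside $S$. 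Yet $1\equiv-2\pmod 3$, so $\Rcal(f)\cup\Bcal(f)$ fails to have good reduction at $3$ and the triple is not in $\widetilde{\GR}_3^1[7](\QQ,S)$. Moreover, the family $f_t(x)=x^3-3t^2x$ with $t=2^k$ produces such maps whose critical points and critical values collide at primes dividing $2t^2\pm1$, so the defect cannot be repaired by enlarging $S$ uniformly in terms of $(K,S,d)$. Thus the paper's reduction to Theorem~\ref{theorem:shafconjP1}(a) is valid exactly under what you call critical excellent reduction, which is evidently how the paper is implicitly reading the hypothesis of Theorem~\ref{theorem:STW}.

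The difficulty is that your proposed bridge from critical good reduction to the full statement is not a proof. Its decisive step---bounding $\beta$ in the factorization $f=\beta\circ f_i$, equivalently bounding the moduli height of $f$---is explicitly assumed (``Granting a resulting bound on the height of~$\beta$\dots''), the case $\#\Bcal(f)\le2$ is flagged as open rather than resolved, and the concluding Galois-cohomological descent is only gestured at. So, as written, your proposal establishes the theorem under critical excellent reduction---which is exactly what the paper's argument establishes---and leaves the strictly weaker critical-good-reduction hypothesis unsettled. If you want to pursue that stronger statement, note that the paper itself reports that Szpiro, Tucker, and West prove their theorem by different means (a finiteness result for good-reduction point configurations combined with Grothendieck's computation of the tangent space of the parameter scheme of morphisms), which is the natural starting point; alternatively, one can try to make your separate-normalization strategy rigorous, but the one-parameter freedom you identify when $\#\Bcal(f)\le2$ is a real issue that must be killed by an actual argument, not by fiat.
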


Theorem~\ref{theorem:STW} of Szpiro, Tucker, and West fits into the
framework of our Theorem~\ref{theorem:shafconjP1}, since their
maps~$f$ correspond to triples
\[
  \bigl(f,\Rcal(f),\Rcal(f)\cup\Bcal(f)\bigr)\in
  \smash[t]{\widetilde{\GR}}_d^1[n](K,S),
\]
where  
\[
  n = \sum_{P\in\Rcal(f)} e_f(P) = \sum_{P\in\Rcal(f)} \bigl(e_f(P)-1\bigr) + \#\Rcal(f)
  = 2d-2+\#\Rcal(f).
\]
If we assume that~$\#\Rcal(f)\ge3$ as in Theorem~\ref{theorem:STW},
then $n\ge 2d+1$, so we see that Theorem~\ref{theorem:STW} follows
from Theorem~\ref{theorem:shafconjP1}(a).

The proof of Theorem~\ref{theorem:STW} in~\cite{SzpiroFieldsTalk2017}
uses a finiteness result for sets of points in~$\PP^1(K)$ having good
reduction outside~$S$, similar to our Lemmas~\ref{lemma:XP1finite}
and~\ref{lemma:fPGL2fX0}, which in turn rely on classical results of
Hermite and Minkowski together with the finiteness of solutions to the
$S$-unit equation. The other ingredient used by Szpior, Tucker, and
West in their proof of Theorem~\ref{theorem:STW} is a special case of
a theorem of Grothendieck that computes the tangent space of the
parameter scheme of morphisms. We remark 
that~\cite{SzpiroFieldsTalk2017,arxiv0603436} also deal with the case
of function fields, which can present additional complications.

The earlier paper~\cite{arxiv0603436} of Szpiro and Tucker proved a
result similar to Theorem~\ref{theorem:STW}, but with a two-sided
conjugation equivalence relation, i.e.,~$f_1$ and~$f_2$ are considered
equivalent if there are maps~$\f,\psi\in\PGL_2$ such that
$f_2=\psi\circ f_1\circ\f$.  This equivalence relation, while
interesting, is not well-suited for studying dynamics.

There is an article of Stout~\cite{MR3273498} in which he proves that
for a fixed rational map~$f$, there are only finitely many~$\Kbar/K$
twists of~$f$ having simple good reduction outside of~$S$.  And a
paper of Petsche~\cite{MR2999309} proves a Shafarevich finiteness
theorem for certain families of critically separable maps, which he
defines to be maps~$f$ of degree~$d\ge2$ such that for every prime not
in~$S$, the reduced map has~$2d-2$ distinct critical points. In other
words,~$\#\Rcal(f)=2d-2$ and~$\Rcal(f)$ has good reduction
outside~$S$. This is not enough to obtain finiteness, so Petsche
restricts to certain codimension~$3$ families in~$\Rat_d^1$ that are
modeled after Latt\`es maps, and he proves that the dynamical
Shafarevich conjecture holds for these families.

A number of authors have studied the resultant equation
$\Resultant(F,G)=c$, where the coefficients of~$F$ and~$G$ are viewed
as indeterminates~\cite{MR1234974,MR1117015,MR1243304}. Taking~$c$ to
be an~$S$-unit, this is clearly related to the question of simple good
reduction of the map~$f=[F,G]\in\End_d^1$.  Rephrasing the results in
our notation,\footnote{We have restricted to the case that
  $\deg(F)=\deg(G)$, although the cited papers do not require this.}
Evertse and Gy\H ory~\cite[Corollary 1]{MR1234974} prove that up
to~$\PGL_2(R_S)$-equivalence, there are only finitely
many~$f=[F,G]\in\End_d^1$ having the property that~$FG$ is square-free
and splits completely over~$K$. Alternatively, their conditions may be
phrased in terms of~$f$ as requiring that~$0$ and~$\infty$ are not
critical values of~$f$ and that the points in
$f^{-1}(0)\cup f^{-1}(\infty)$ are in~$\PP^1(K)$, and their conclusion
is that Shafarevich finiteness is true for this collection of maps. We
note that the condition that $f^{-1}(0)\cup
f^{-1}(\infty)\subset\PP^1(K)$ means, more-or-less, that the maps in
question correspond to $S$-integral points on a $2d$-to-$1$ finite
cover of an open subset of~$\End_d^1$.

Finally, we mention two topics that seem at least tangentially
related.  There are a number of papers that fix a map~$f$ and a
wandering point~$P$ and ask which portraits arise when one reduces the
orbit of~$P$ modulo various primes; see for
example~\cite{arxiv0903.1344,MR3349337}. And there are two articles of
Doyle~\cite{MR3487222,arxiv1603.08138} in which he classifies periodic
point portraits that are permitted for unicritical polynomials, i.e.,
polyomials of the form~$ax^d+b$.  These results could be useful in
studying the geometry and arithemtic of our portrait moduli spaces
studied in~\cite{moduliportrait2017}.

\section{Dynamical Shafarevich Finiteness Holds on $\PP^1$ for Weight ${}\ge2d+1$} 
\label{section:pfdynshafP1}
In this section we prove 
Theorem~\ref{theorem:shafconjP1}(a), namely we prove that the
dynamical Shafarevich finiteness holds for maps~$f$
of~$\PP^1$ and $f$-invariant sets~$X$ of weight at least~$2d+1$. The
first step is to show that there are only finitely many choices for
the set~$X$.

\begin{definition}
Let~$K$ be a number field, let~$S$ be a finite set of places including
all archimedean places, and let~$n\ge1$ be an integer. We
define~$\Xcal[n](K,S)$ to be the collection of subsets
$X\subset\PP^1(\Kbar)$ satisfying\textup:
\begin{parts}
\item[\textbullet]
  $\#X=n$.
\item[\textbullet]
  $X$ is $\Gal(\Kbar/K)$-invariant.
\item[\textbullet]
  $X$ has good reduction outside $S$.
\end{parts}
We note that if~$\f\in\PGL_2(R_S)$ and
$X=\{P_1,\ldots,P_n\}\in\Xcal[n](K,S)$, then
\begin{equation}
  \label{eqn:phiXphiP1phiPn}
  \f(X) := \bigl\{\f(P_1),\ldots,\f(P_n)\bigr\} \in \Xcal[n](K,S),
\end{equation}
so there is a natural action of~$\PGL_2(R_S)$ on~$\Xcal[n](K,S)$.
More generally, we use~\eqref{eqn:phiXphiP1phiPn} to define an action
of~$\PGL_2(\Kbar)$ on~$n$-tuples of points in~$\PP^1(\Kbar)$.
\end{definition}

The following lemma is well-known, but for lack of a suitable reference
and as a convenience to the reader, we include the proof.

\begin{lemma}
\label{lemma:XP1finite}
Fix a number field~$K$, a finite set of places~$S$ including all
archimedean places, and an integer~$n\ge3$. Then
\[
  \Xcal[n](K,S)/\PGL_2(R_S)
\]
is finite.
\end{lemma}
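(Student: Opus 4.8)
The plan is to replace the set $X$ by a single binary form defined over $K$, to reduce the whole statement to a finiteness theorem for such forms, and then to prove that theorem by bounding the splitting field via Hermite--Minkowski and invoking the finiteness of solutions of the $S$-unit equation. Concretely, given $X=\{P_1,\dots,P_n\}$ with $P_i=[\alpha_i:\beta_i]$, I would set
\[
  F_X(x,y)=\prod_{i=1}^n(\beta_i x-\alpha_i y).
\]
Since $\Gal(\Kbar/K)$ permutes the points it permutes the linear factors, so $F_X\in K[x,y]$; after clearing denominators and removing a common factor I may take $F_X\in R_S[x,y]$ to be primitive of degree $n$. The $\PGL_2(R_S)$-action on $X$ corresponds to linear substitution in $F_X$ (modulo the $R_S^*$-scaling that is invisible in $\PP^1$), and the good-reduction hypothesis says exactly that $F_X$ has $n$ distinct roots modulo every $\gp\notin S$, i.e.\ that $\Disc(F_X)\in R_S^*$. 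Thus the lemma becomes the assertion that primitive degree-$n$ binary forms over $R_S$ with unit discriminant fall into finitely many $\PGL_2(R_S)$-orbits, which is the $S$-integral form of the Birch--Merriman theorem due to Evertse and Gy\H ory~\cite{MR1234974}; the remaining steps reproduce its proof in this setting.

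Next I would bound the field of definition. Let $L=K(P_1,\dots,P_n)$ be the splitting field of $F_X$. The permutation action embeds $\Gal(L/K)\hookrightarrow S_n$, so $[L:K]\le n!$, and since $\Disc(F_X)$ is an $S$-unit the relative discriminant of $L/K$ is supported on $S$, so $L/K$ is unramified outside $S$. The theorem of Hermite and Minkowski then leaves only finitely many possibilities for $L$, and I fix one, writing $R_{S,L}$ for its ring of $S$-integers and $R_{S,L}^*$ for the units.

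Over $L$ the roots have $R_{S,L}$-integral homogeneous coordinates, and good reduction forces every $\delta_{ij}=\beta_i\alpha_j-\beta_j\alpha_i$ to lie in $R_{S,L}^*$, because each $\delta_{ij}$ is an $S$-integer while $\Disc(F_X)=\pm\prod_{i<j}\delta_{ij}^2$ is an $S$-unit. The key input is the three-term relation, available since $n\ge3$: in an affine chart where $\beta_i=1$,
\[
  \frac{\alpha_i-\alpha_j}{\alpha_i-\alpha_k}+\frac{\alpha_j-\alpha_k}{\alpha_i-\alpha_k}=1 .
\]
Each summand is a ratio of two $\delta$'s, hence an element of $R_{S,L}^*$, so this is an instance of the $S$-unit equation $u+v=1$. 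Finiteness of its solutions pins each such ratio to one of finitely many values, and running over all triples determines the projective configuration of the roots up to finitely many possibilities.

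The last and hardest step is the descent from projective to integral equivalence. The cross-ratio data only fixes the configuration up to $\PGL_2(\Kbar)$, whereas I must bound the number of $\PGL_2(R_S)$-orbits, and the difficulty is precisely that the group $\PGL_2(R_S)$ is defined over $K$ while the points and the relevant units live over $L$. Here I would argue that once the ratios among the $\delta_{ij}$ are fixed, the integral coordinates of the points are determined up to a diagonal scaling by $R_{S,L}^*$ and up to the action of $\GL_2(R_S)$; the residual ambiguity is then absorbed by the finiteness of the $S$-class group of $L$ together with the finite generation of $R_{S,L}^*$ (which makes the relevant scaling finite modulo suitable powers), leaving only finitely many $\PGL_2(R_S)$-classes. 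This integral bookkeeping is the real content of the lemma, everything preceding it being the standard reduction to an $S$-unit equation.
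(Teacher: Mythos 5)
Your overall route is genuinely different from the paper's: you encode $X$ as a primitive binary form $F_X$ over $R_S$, translate good reduction outside $S$ into $\Disc(F_X)\in R_S^*$, and reduce the lemma to the $S$-integral Birch--Merriman theorem. That reduction is sound (modulo the remark below), and the first two-thirds of your sketch --- Hermite--Minkowski for the splitting field $L$, the differences $\delta_{ij}$ being $S$-units in $L$, and the $u+v=1$ relation among their ratios --- is correct and in fact parallels the paper's own use of these tools. The genuine gap is exactly where you say the ``real content'' lies: the descent from finitely many configurations up to $\PGL_2(\Kbar)$-equivalence to finitely many $\PGL_2(R_S)$-orbits. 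The mechanism you offer (finiteness of the $S$-class group of $L$ plus finite generation of $R_{S,L}^*$ ``modulo suitable powers'') is not an argument: $R_{S,L}^*$ is infinite, no fixed power structure is present in the ambiguity, and --- more fundamentally --- the transformation $\f$ matching two configurations lives a priori only in $\PGL_2(L)$ and need \emph{not} be defined over $K$, because the sets in $\Xcal[n](K,S)$ are merely $\Gal(\Kbar/K)$-stable, not pointwise $K$-rational. So even after showing $\f\in\PGL_2(R_{S,L})$ (a step you also do not prove), you cannot conclude $\PGL_2(R_S)$-equivalence; this is precisely the issue the paper flags when it notes that the natural map $\Xcal[n](K,S)/\PGL_2(R_S)\to\Xcal[n](L,T)/\PGL_2(R_T)$ is not obviously finite-to-one.

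What is missing are the three ingredients the paper supplies in Sublemma~\ref{lemma:fPGL2fX0}: (i) the local statement (Sublemma~\ref{sublemma:3ptsmodp}) that a fractional linear transformation carrying three points with distinct reductions to three points with distinct reductions automatically has good reduction, which upgrades $\f$ to $\PGL_2(R_{S'})$; (ii) a Galois-descent (``cocycle'') argument, namely that $\s\mapsto\f^{-1}\circ\f^{\s}$ takes values in the finite permutation group of $X_0$, so that after splitting into finitely many classes two candidate maps differ by an element of $\PGL_2(K)$; and (iii) the identity $\PGL_2(K)\cap\PGL_2(R_{S'})=\PGL_2(R_S)$. Your proof would become correct either by supplying these steps or by genuinely quoting the Evertse--Gy\H{o}ry finiteness theorem for binary forms with given discriminant (their 1991 Compositio paper, where this descent is carried out) as a black box; note that the reference you actually cite is their resultants paper, whose relevant corollary requires the form to split completely over $K$ and therefore does not apply to $F_X$. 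Two smaller points: primitivity of $F_X$ over $R_S$ (needed for the equivalence with $\Disc(F_X)\in R_S^*$) requires the content ideal to be principal, so you must either work with finitely many ideal-class representatives or enlarge $S$ --- and the latter is only legitimate once the fiber-bounding statement of Sublemma~\ref{lemma:fPGL2fX0} is in hand, which is again the step at issue.
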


We start with a sublemma that will allow us to restrict attention to set of points
defined over a single field~$K$.

\begin{sublemma}
\label{lemma:fPGL2fX0}
Let~$K$ be a number field, let~$S$ be a finite set of places including
all archimedean places, and let~$n\ge3$ be an integer.
Then there is a constant~$C(K,S,n)$ such the map
\[
  \Xcal[n](K,S)/\PGL_2(R_S) \longrightarrow
  \bigl\{ X\subset\PP^1(\Kbar) : \#X=n \bigr\} /\PGL_2(\Kbar)
\]
is at most~$C(K,S,n)$-to-$1$.
\end{sublemma}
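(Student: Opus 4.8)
The plan is to factor the given map through the intermediate quotient $\{X:\#X=n\}/\PGL_2(K)$ and to bound separately the two resulting fiber sizes: first the \emph{geometric} fiber, measuring how many $\PGL_2(K)$-classes of $K$-rational good-reduction sets lie in a single $\PGL_2(\Kbar)$-orbit, and then the \emph{arithmetic} fiber, measuring how many $\PGL_2(R_S)$-classes lie in a single $\PGL_2(K)$-orbit. Fix a geometric class and, assuming its fiber is nonempty, a base point $X\in\Xcal[n](K,S)$ in it; let $G=\Stab_{\PGL_2(\Kbar)}(X)$, which is finite because $n\ge3$ forces a faithful action on the $n$ points, so $|G|\le n!$. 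Any other $X'$ in the fiber satisfies $X'=\psi(X)$ for some $\psi\in\PGL_2(\Kbar)$, and Galois-invariance of $X$ and $X'$ shows that $c_\sigma:={\psi}^{-1}\,{}^\sigma\!\psi$ lands in $G$ and defines a $1$-cocycle of $\Gal(\Kbar/K)$ with values in $G$. Changing $\psi$ by $G$ changes $c$ by a coboundary, so $X'\mapsto[c]\in H^1(\Gal(\Kbar/K),G)$ is well defined, and a short computation shows that two sets with the same class are $\PGL_2(K)$-equivalent.

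It then remains to bound the image of this classifying map. The key point is that $c$ factors through a controlled finite Galois group: since $\psi$ is determined by the images of any three points of $X$, it is defined over $LL'$ with $L=K(X)$ and $L'=K(X')$, so $c$ is trivial on $\Gal(\Kbar/LL')$. Good reduction of $X$ (the étale closure over $R_S$) forces $L/K$ to be unramified outside $S$, and $[L:K]\le n!$ since $\Gal(L/K)$ embeds into the group of permutations of $X$; the same holds for $L'$, hence for $LL'$. By Hermite--Minkowski there are only finitely many such fields $M$ (degree $\le(n!)^2$, unramified outside $S$), and for each the number of maps $\Gal(M/K)\to G$ is at most $(n!)^{(n!)^2}$. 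Thus the number of $\PGL_2(K)$-classes in the fiber is bounded by a constant $C_1(K,S,n)$ independent of the chosen geometric class.

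For the descent from $\PGL_2(K)$ to $\PGL_2(R_S)$, suppose $X_1',X_2'\in\Xcal[n](K,S)$ satisfy $X_2'=\theta(X_1')$ with $\theta\in\PGL_2(K)$. I would first show $\theta$ is locally integral: writing $\theta=\gamma_1\diag(\pi^a,1)\gamma_2$ with $\gamma_i\in\PGL_2(R_\gp)$ in elementary-divisor form at a prime $\gp\notin S$, if $a\ge1$ then the reduction of $\diag(\pi^a,1)$ collapses all of $\PP^1$ except one point to a single point, so at least $n-1\ge2$ of the distinct reductions of $X_1'$ would be identified, contradicting good reduction of $X_2'$. Hence $a=0$ and $\theta\in\PGL_2(R_\gp)$ for every $\gp\notin S$. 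The set of such $K$-rational, everywhere-locally-integral classes modulo $\PGL_2(R_S)$ is the class set of $\PGL_2$ over $R_S$, which is finite (bounded in terms of $\Pic(R_S)$, hence of $K$ and $S$); call its size $h(K,S)$. Combining, each geometric fiber has at most $C_1(K,S,n)\cdot h(K,S)$ elements, giving the desired uniform bound $C(K,S,n)$.

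The main obstacle is this last descent step. The cohomological finiteness over $K$ is the expected Shafarevich mechanism (bounded ramification plus Hermite--Minkowski), but passing from $\PGL_2(K)$ to the integral group $\PGL_2(R_S)$ is where the genuine content lies: one must exploit $n\ge3$ together with good reduction to pin $\theta$ down to $\PGL_2(R_\gp)$ at each prime, and then absorb the unavoidable global class-group obstruction into the constant. I would take care to verify that ``étale closure over $R_S$'' cleanly yields ``field of definition unramified outside $S$,'' and that the collapsing argument remains valid after reducing in a common large field containing the coordinates of both $X_1'$ and $X_2'$.
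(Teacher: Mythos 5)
Your proposal is correct, and it proves the sublemma using the same three pillars as the paper---Hermite--Minkowski finiteness for the fields of definition (which are Galois, of degree at most $n!$, and unramified outside $S$ by good reduction), the fact that an element of $\PGL_2$ is determined by its values at three points, and an integrality argument forced by good reduction of the point sets---but it organizes them along a genuinely different decomposition. You factor through $\bigl\{X\bigr\}/\PGL_2(K)$ and bound the two resulting fibers separately: the geometric fiber by an honest nonabelian cocycle $c_\s=\psi^{-1}\,{}^\s\psi$ valued in the finite stabilizer $G=\Stab_{\PGL_2(\Kbar)}(X)$, whose class factors through one of the finitely many fields $LL'$ permitted by Hermite--Minkowski; and the arithmetic fiber by elementary divisors at each $\gp\notin S$ (your collapsing argument) plus finiteness of the index of $\PGL_2(R_S)$ in the group $T$ of everywhere-locally-integral elements of $\PGL_2(K)$. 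The paper instead works over a single compositum $K'$ of all fields $K_X$, shows first that any $\f$ with $\f(X_0)\in\Xcal[n](K,S)$ lies in $\PGL_2(K')\cap\PGL_2(R_{S'})$---its Sublemma on three points with distinct reductions is exactly your collapsing argument in different clothes---then counts classes via the map $\f\mapsto(\s\mapsto\f^{-1}\circ\f^\s)$ into $\operatorname{Map}\bigl(\Gal(K'/K),\Scal_{X_0}\bigr)$, and finally descends integrality from $R_{S'}$ to $R_S$ by normalizing coefficients prime by prime. Your route is more modular and makes explicit the class-group obstruction in passing from ``integral at every $\gp\notin S$'' to ``$\PGL_2(R_S)$,'' which the paper absorbs by in effect reading $\PGL_2(R_S)$ as $\Aut(\PP^1_{R_S})$; the paper's route avoids nonabelian cohomology entirely and, by securing integrality over $R_{S'}$ before the Galois count, never needs any class-set input. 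Two details to nail down in a full write-up: your quotient $T/\PGL_2(R_S)$ is not literally the adelic class set, but it is finite, e.g.\ because the content ideal of a matrix representative defines a homomorphism from $T$ to $\Pic(R_S)$ landing in the $2$-torsion, with kernel $\GL_2(R_S)/R_S^*$; and in the collapsing step the decomposition $\theta=\g_1\diag(\pi^a,1)\g_2$ requires replacing $X_1'$ by $\g_2(X_1')$ and $X_2'$ by $\g_1^{-1}(X_2')$, which is harmless since $\PGL_2(R_\gp)$ preserves distinctness of reductions at every prime above $\gp$ of the field generated by the coordinates.
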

\begin{proof}
Let $X=\{P_1,\ldots,P_n\}\in\Xcal[n](K,S)$.  The fact that~$X$ is
Galois invariant implies that the field
\[
  K_X:= K(P_1,\ldots,P_n)
\]
is a Galois extension of degree dividing~$n!$.  Further, the good
reduction assumption on~$X$ implies that~$K_X/K$ is unramified
outside~$S$. It follows from the Hermite--Minkowski theorem
\cite[Section~III.2]{MR1697859} that there are only finitely many
possibilities for the field~$K_X$.\footnote{More precisely, our
  assumptions imply that for~$\gp\notin S$, we have $\ord_\gp
  \mathfrak{D}_{L/K}=0$, while for all primes~$\gp$ one has the
  standard estimate $\ord_\gp \mathfrak{D}_{L/K}\le[L:K]-1$.  This
  proves that~$\Norm_{L/K} \mathfrak{D}_{L/K}$ is bounded, and then
  for a fixed~$K$, Hermite--Minkowski says that there are only
  finitely many~$L$.}  It follows that the field
\begin{equation}
  \label{eqn:prodKX}
  K' := \prod_{X\in\Xcal[n](K,S)} K_X
\end{equation}
is a finite Galois extension of~$K$ that depends only on~$K$,~$S$,
and~$n$.

We now fix an $n$-tuple $X_0\in\Xcal[n](K,S)$, say
$X_0=\{Q_1,\ldots,Q_n\}$, and consider the set of $n$-tuples
in~$\Xcal[n](K,S)$ that are~$\PGL_2(\Kbar)$-equivalent to~$X_0$. Our
goal is to prove that the set
\[
  \PGL_2(K,S,X_0) :=
  \bigl\{ \f\in\PGL_2(\Kbar) : \f(X_0)\in\Xcal[n](K,S) \bigr\}
\]
has the property that~$\PGL_2(K,S,X_0)/\PGL_2(R_S)$ is finite and has
order bounded solely in terms of~$K$,~$S$, and~$n$.

Our first observation is that if $\f\in\PGL_2(K,S,X_0)$, then in
particular we have $Q_i\in\PP^1(K')$ and $\f(Q_i)\in\PP^1(K')$ for all
$1\le i\le n$, where~$K'$ is the field~\eqref{eqn:prodKX}. A
fractional linear transformation is determined by its values at three
points, so our assumption that~$n\ge3$ tells us
that~$\f\in\PGL_2(K')$, i.e., every~$\f\in\PGL_2(K,S,X_0)$ is defined
over the fiinite extension~$K'$ of~$K$, where~$K'$ does not depend
on~$X_0$.

Next let~$S'$ be the places of~$K'$ lying over~$S$. The good reduction
assumption on~$X_0$ and~$\f(X_0)$ implies that $Q_1,\ldots,Q_n$ remain
distinct modulo all primes~$\gP$ of~$L$ with~$\gP\notin S'$, and
similarly for $\f(Q_1),\ldots,\f(Q_n)$. Since~$n\ge3$, we can apply
the following elementary result to conclude that~$\f$ has simple good
reduction at~$\gP$, and since this holds for all~$\gP\notin S'$, we
see that~$\f\in\PGL_2(R_{S'})$.

\begin{sublemma}
\label{sublemma:3ptsmodp} 
Let $R$ be a discrete valuation ring with maximal ideal $\gP$ and
fraction field~$K$. Let
$P_1,P_2,P_3\in\PP^1(K)$ be points whose reductions modulo~$\gP$ are
distinct, and let $Q_1,Q_2,Q_3\in\PP^1(K)$ also be points with
distinct mod~$\gP$ reductions. Let $\f\in\PGL_2(K)$ be the unique linear
fractional transformation satisfying $\f(P_i)=Q_i$ for $1\le i\le 3$.
Then $\f\in\PGL_2(R)$, i.e.,~$\f$ has good reduction modulo~$\gP$.
\end{sublemma}
\begin{proof}
The fact that the reductions of~$P_1,P_2,P_3$ are distinct means that
we can find a linear fractional transformation~$\psi\in\PGL_2(R)$
satisfying $\psi(P_1)=0$, $\psi(P_2)=1$, $\psi(P_3)=\infty$.
Similarly, we can find a~$\l\in\PGL_2(R)$ satisfying
$\l(Q_1)=0$, $\l(Q_2)=1$, $\l(Q_3)=\infty$. Then
$\l\circ\f\circ\psi^{-1}$ fixes~$0$,~$1$, and~$\infty$, so it is the identity
map. Hence $\f=\l^{-1}\circ\psi\in\PGL_2(R)$.
\end{proof}

We next observe that if $\f\in\PGL_2(K,S,X_0)$, then by definition and
from what we proved earlier, both of the sets~$X_0$ and~$\f(X_0)$ are
composed of points in~$\PP^1(K')$ and both are~$\Gal(K'/K)$-invariant.
Hence for any~$\s\in\Gal(K'/K)$, we find that
\[
  \f(X_0)=\bigl(\f(X_0)\bigr)^\s=\f^\s(X_0^\s)=\f^\s(X_0).
\]
Thus $\f^{-1}\circ\f^\s:X_0\to X_0$, i.e., the map~$\f^{-1}\circ\f^\s$ is a
permutation of the set~$X_0$. We thus obtain a map
\[
\begin{array}{ccc}
  \PGL_2(K,S,X_0) &\longrightarrow& \operatorname{Map}_{\textsf{Set}}\bigl(\Gal(K'/K),\Scal_{X_0}\bigr), \\
  \f &\longmapsto& \bigl( \s \longmapsto \f^{-1}\circ\f^\s \bigr),
\end{array}
\]
where $\Scal_{X_0}$ denotes the group of permutations of the
set~$X_0$.  (The map $\s\mapsto\f^{-1}\circ\f^\s$ is actually some
sort of cocycle, but that is irrelevant for our purposes.)
Since~$\Gal(K'/K)$ and~$\Scal_{X_0}$ are both finite and have order
bounded in terms of~$K$,~$S$, and~$n$, it suffices to fix some
$\f_0\in\PGL_2(K,S,X_0)$ and to bound the number of~$\PGL_2(R_S)$
equivalence classes of maps $\f\in\PGL_2(K,S,X_0)$ that have the same
image in
$\operatorname{Map}_{\textsf{Set}}\bigl(\Gal(K'/K),\Scal_{X_0}\bigr)$.
This means that for all $\s\in\Gal(K'/K)$, the maps
$\f^{-1}\circ\f^\s=\f_0^{-1}\circ\f_0^\s$ have the same effect
on~$X_0$; and since~$\#X_0=n\ge3$ and linear fractional
transformations are determined by their values on three points, it
follows that $\f^{-1}\circ\f^\s=\f_0^{-1}\circ\f_0^\s$ as elements
of~$\PGL_2(K')$. Thus
\[
  \f\circ\f_0^{-1} = \f^\s\circ(\f_0^\s)^{-1} = (\f\circ\f_0^{-1})^\s
  \quad\text{for all $\s\in\Gal(K'/K)$.}
\]
Hence $\f\circ\f_0^{-1}\in\PGL_2(K)$. But we also know that~$\f_0$ and~$\f$
are in~$\PGL_2(R_{S'})$, so
\[
  \f\circ\f_0^{-1}\in\PGL_2(K)\cap\PGL_2(R_{S'}).
\]

It remains to show that 
\begin{equation}
  \label{eqn:PGL2KPGL2R}
  \PGL_2(K)\cap\PGL_2(R_{S'}) = \PGL_2(R_S),
\end{equation}
since that will show that up to composition with  elements
of $\PGL_2(R_S)$, there are only finitely many choices for~$\f$.  In
order to prove~\eqref{eqn:PGL2KPGL2R}, we start with some
$\psi\in\PGL_2(K)\cap\PGL_2(R_{S'})$. Then for each prime~$\gp\notin S$,
we need to show that~$\psi$ has good reduction at~$\gp$. We write~$\psi$
in normalized form as
\begin{multline}
  \label{eqn:psinormalized}
  \psi(X,Y)=[aX+bY,cX+dY] \quad\text{with}\quad a,b,c,d\in K\\
  \text{and}\quad
  \min\bigl\{ \ord_\gp(a),\ord_\gp(b),\ord_\gp(c),\ord_\gp(d) \bigr\} = 0,
\end{multline}
i.e.,~$a,b,c,d$ are all~$\gp$-integral, and at least one of them is
a~$\gp$-unit. Now let~$\gP$ be a prime of~$K'$ lying above~$\gp$. We
are given that~$\psi$ has good reduction at~$\gP$, which means that if
we choose a $\gP$-normalized equation for~$\psi$, its reduction
modulo~$\gP$ has good reduction.  But~\eqref{eqn:psinormalized} is already
normalized for~$\gP$, since $\ord_\gP=e(\gP/\gp)\ord_\gp$. Hence
\[
  \text{$ad-bc$ is a $\gP$-adic unit.}
\]
But $ad-bc\in K$, so~$ad-bc$ is a~$\gp$-adic unit, and hence~$\psi$
has good reduction at~$\gp$. This holds for all~$\gp\notin S$, which
completes the proof that~$\psi\in\PGL_2(R_S)$, and thus completes
the proof of Sublemma~\ref{lemma:fPGL2fX0}.
\end{proof}

\begin{proof}[Proof of Lemma~$\ref{lemma:XP1finite}$]
Let $L/K$ be a finite Galois extension, and let~$T$ be a finite of
places of~$L$ whose restriction to~$K$ contains~$S$. Then we get a
natural map
\begin{equation}
  \label{eqn:XKStoXLT}
  \Xcal[n](K,S)/\PGL_2(R_S) \longrightarrow \Xcal[n](L,T)/\PGL_2(R_T),
\end{equation}
since if~$X\subset\PP^1(\Kbar)$ is~$\Gal(\Kbar/K)$ invariant and has
good reduction outside~$S$, it is clear that~$X$ is
also~$\Gal(\bar{L}/L)$ invariant and has good reduction
outside~$T$. However, what  is not clear \emph{a priori} is that the
map~\eqref{eqn:XKStoXLT} is finite-to-one, since~$\PGL_2(R_T)$ may be
larger than~$\PGL_2(R_S)$.

However Sublemma~\ref{lemma:fPGL2fX0} says not only that the map
\[
  \Xcal[n](K,S)/\PGL_2(R_S)
  \longrightarrow   \bigl\{ X\subset\PP^1(\Kbar) : \#X=n \bigr\} /\PGL_2(\Kbar)
\]
is finite-to-one, but it also says that the number of elements in
each $\PGL_2(\Kbar)$-equivalence class of $\Xcal[n](K,S)/\PGL_2(R_S)$
is bounded solely in terms of~$K$,~$S$, and~$n$. Hence
using~\eqref{eqn:XKStoXLT}, it suffices to prove
Lemma~\ref{lemma:XP1finite} for any such~$L$ and~$T$.

As shown in the proof of Sublemma~\ref{lemma:fPGL2fX0}, there is a finite
extension~$K'/K$ such that every~$X\in\Xcal[n](K,S)$ is an $n$-tuple
of points in~$\PP^1(K')$. We then let~$S'$ be a finite set of places
of~$K'$ such that~$S'$ restricted to~$K$ contains~$S$ and such
that~$R_{S'}$ is a PID. Replacing~$K$ and~$S$ with~$K'$ and~$S'$, we
are reduced to studying the $\PGL_2(R_S)$-equivalence classes of the
set of~$X\in\Xcal[n](K,S)$ such that
\[
  X=\{P_1,\ldots,P_n\}\quad\text{with}\quad P_1,\ldots,P_n\in\PP^1(K),
\]
with the further condition that~$R_S$ is a PID.  This allows us to
choose normalized coordinates for the points in~$X$, say
\[
  P_i = [a_i,b_i] \quad\text{with $a_i,b_i\in R_S$ and
    $\gcd\nolimits_{R_S}(a_i,b_i)=1$.}
\]
The good reduction assumption says that~$P_1,\ldots,P_n$ are distinct
modulo all primes not in~$S$, which given our normalization of the
coordinates of the~$P_i$, is equivalent to the statement that
\[
  a_ib_j-a_jb_i\in R_S^*\quad\text{for all $1\le i<j\le n$.}
\]
This means that we can find a linear fractional transformation
$\f\in\PGL_2(R_S)$ that moves the first three points in our list to
the points 
\[
  \f(P_1)=[1,0],\quad \f(P_2)=[0,1],\quad \f(P_3)=[1,1].
\]
Replacing~$X$ by~$\f(X)$, the remaining points in~$X$ are~$S$-integral
points of the scheme
\begin{equation}
  \label{eqn:P1RS100111}
  \PP^1_{R_S}\setminus \bigl\{[1,0],[0,1],[1,1]\bigr\},
\end{equation}
and it is well-known that there are only finitely many such points.
More precisely, a normalized point~$P=[a,b]$ is an $S$-integral point
of the scheme~\eqref{eqn:P1RS100111} if and only if~$a$,~$b$,
and~$a-b$ are $S$-units. But this implies that
$(\frac{a}{a-b},\frac{b}{b-a})$ is a solution to the $S$-unit equation
$U+V=1$, and hence that there are only finitely many values for each
of~$\frac{a}{a-b}$
and~$\frac{b}{b-a}$~\cite[IX.4.1]{MR2514094}. Further, each $S$-unit
solution~$(u,v)$ to $u+v=1$ gives one point $P=[a,b]=[u,-v]$. This
concludes the proof that there are only finitely
many~$\PGL_2(R_S)$-equivalence classes of sets~$X$ having~$n$ elements
and good reduction outside~$S$.
\end{proof}  

The following geometric result is also undoubtedly well-known, but for
lack of a suitable referece and the convenience of the reader, we
include the short proof.

\begin{lemma}
\label{lemma:interpratfnc}
Let $K$ be a field, and let $f,g:\PP_K^1\to\PP_K^1$ be rational maps of
degree~$d\ge1$. Suppose that
\[
  \sum_{\substack{P\in\PP^1(K)\\ f(P)=g(P)\\}} \min\bigl\{ e_f(P),e_g(P) \bigr\} \ge 2d+1.
\]
Then $f=g$. 
\end{lemma}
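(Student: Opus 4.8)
The plan is to encode the locus where $f$ and $g$ agree as the zero set of a single homogeneous form, and then to bound its degree. Write $f=[F_0,F_1]$ and $g=[G_0,G_1]$, where $F_0,F_1,G_0,G_1\in K[X,Y]$ are homogeneous of degree~$d$ with $\gcd(F_0,F_1)=\gcd(G_0,G_1)=1$, and set
\[
  H := F_0G_1-F_1G_0,
\]
a homogeneous form of degree~$2d$. For $P\in\PP^1(\Kbar)$ one has $f(P)=g(P)$ if and only if $H(P)=0$, and $H$ vanishes identically if and only if $F_0/F_1=G_0/G_1$, i.e.\ if and only if $f=g$. So I may assume $f\ne g$, in which case $H\not\equiv0$ and $\sum_{P\in\PP^1(\Kbar)}\ord_P(H)=2d$, where $\ord_P(H)$ denotes the order of vanishing of~$H$ at~$P$.

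The key step is the local estimate
\[
  \ord_P(H)\ge\min\bigl\{e_f(P),e_g(P)\bigr\}
  \qquad\text{whenever } f(P)=g(P).
\]
To prove it I first reduce to the case that the common image $Q:=f(P)=g(P)$ is not the point $[1,0]$. This uses the fact that post-composing both maps by a single $\sigma=\SmallMatrix{a&b\\c&d}\in\PGL_2$ replaces $(F_0,F_1)$ and $(G_0,G_1)$ by $(aF_0+bF_1,\,cF_0+dF_1)$ and $(aG_0+bG_1,\,cG_0+dG_1)$, and a direct expansion shows that the associated form becomes $(ad-bc)H$; hence $\ord_P(H)$ is unchanged, while the ramification indices $e_f(P),e_g(P)$ are plainly unaffected. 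Choosing $\sigma$ so that $\sigma(Q)$ is finite, I then work in an affine coordinate $t$ centered at~$P$ (so that $t$ is a local uniformizer there) with $Q$ finite, say $Q=q$. Then $F_1(P)\ne0$ and $G_1(P)\ne0$, so locally
\[
  H = F_1G_1\left(\frac{F_0}{F_1}-\frac{G_0}{G_1}\right)=F_1G_1\bigl(f(t)-g(t)\bigr),
\]
and since $F_1G_1$ is a unit at~$P$ I obtain $\ord_P(H)=\ord_t\bigl(f(t)-g(t)\bigr)$. Writing $f(t)-g(t)=\bigl(f(t)-q\bigr)-\bigl(g(t)-q\bigr)$ and using $\ord_t(f(t)-q)=e_f(P)$ and $\ord_t(g(t)-q)=e_g(P)$ yields the claimed bound.

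Granting the local estimate, the lemma follows immediately: the $K$-rational agreement points are distinct points of~$\PP^1$, so they contribute disjointly to the divisor of~$H$, and therefore
\[
  \sum_{\substack{P\in\PP^1(K)\\ f(P)=g(P)}}\min\bigl\{e_f(P),e_g(P)\bigr\}
  \le\sum_{\substack{P\in\PP^1(K)\\ f(P)=g(P)}}\ord_P(H)
  \le\sum_{P\in\PP^1(\Kbar)}\ord_P(H)=2d,
\]
which contradicts the hypothesis that the left-hand side is at least $2d+1$. The only real obstacle is the local multiplicity bound; the invariance of $\ord_P(H)$ under simultaneous post-composition is what lets me dispose of the troublesome case $Q=[1,0]$ cleanly, after which the identity $H=F_1G_1(f-g)$ reduces everything to the elementary fact that the order of a difference is at least the minimum of the two orders.
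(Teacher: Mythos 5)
Your proof is correct, and its local half is essentially the same as the paper's: the paper also reduces to the case where the common value $f(P)=g(P)$ is finite and then bounds $\dim_K K[\![z]\!]/\bigl(f(z)-g(z)\bigr)=\ord_z\bigl(f(z)-g(z)\bigr)\ge\min\bigl\{e_f(P),e_g(P)\bigr\}$, which is exactly your ultrametric inequality for the difference. Where you genuinely diverge is in the global bookkeeping. The paper works on the surface $\PP^1\times\PP^1$: it forms $\G_{f,g}=(f\times g)_*\D$, computes $\G_{f,g}=dH_1+dH_2$ in $\Pic(\PP^1\times\PP^1)$ via the push-pull formula, deduces $\G_{f,g}\cdot\D=2d$, and then must distinguish the case where $|\G_{f,g}|\cap|\D|$ is finite from the case of a common component, invoking irreducibility of both curves to conclude $|\D|=|\G_{f,g}|$ and hence $f=g$. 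You replace all of this machinery with the single binary form $H=F_0G_1-F_1G_0$ of degree $2d$: the dichotomy ``finite intersection versus common component'' becomes the transparent dichotomy ``$H\not\equiv0$ versus $H\equiv0$'' (with $H\equiv0$ forcing $f=g$ directly by coprimality), the divisor-class computation becomes the elementary fact that a nonzero binary form of degree $2d$ has exactly $2d$ roots in $\PP^1(\Kbar)$ counted with multiplicity, and the identity that simultaneous post-composition by $\s\in\PGL_2$ rescales $H$ by $\det\s$ disposes of the point at infinity (the paper instead adjusts the target uniformizer by $z\mapsto z/(z-1)$). Your route is more elementary and self-contained, avoiding $\Pic$, push-pull, and the irreducibility argument, and it works verbatim over any field. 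What the paper's intersection-theoretic framing buys is conceptual portability: it is the viewpoint that adapts to the higher-dimensional statement in the paper's section on possible generalizations, where maps $\PP^N\to\PP^N$ agreeing at many points are shown to agree on a curve, and where no analogue of your degree-$2d$ cross-product form in a single variable pair is available.
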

\begin{proof}
We may assume that~$K$ is algebraically closed.  We fix a basepoint
$P_0\in\PP^1(K)$, and we take
\[
  H_1 = \{P_0\}\times\PP^1
  \quad\text{and}\quad
  H_2=\PP^1\times\{P_0\}
\]
as generators for $\Pic(\PP^1\times\PP^1)\cong\ZZ\oplus\ZZ$. We
consider the divisors
\begin{align*}
  \D&=\bigl\{ (P,P) : P\in\PP^1(K)\bigr\} \in \Div(\PP^1\times\PP^1), \\
  \G_{f,g}&= (f\times g)_*\D \in \Div(\PP^1\times\PP^1).
\end{align*}
We write $|\D|$ and $|\G_{f,g}|$ for the supports of~$\D$
and~$\G_{f,g}$, respectively, and we note that these supports are
irreducible, since they are the images of~$\PP^1$ under, respectively,
the diagonal map and the map~$f\times g$.

We use the push-pull formula to compute the global intersection
\[
  \G_{f,g}\cdot H_1 = (f\times g)_*(\D) \cdot H_1
  = \D\cdot (f\times g)^*(H_1)
  = \D\cdot (f^*(P_0)\times\PP^1)
  = d.
\]
Siimlarly, we have $\G_{f,g}\cdot H_2=d$. Hence
\[
  \G_{f,g} = d H_1 + d H_2\quad\text{in $\Pic(\PP^1\times\PP^1)$.}
\]
This allows us to compute
\begin{equation}
  \label{eqn:GfgD2d}
  \G_{f,g}\cdot \D = d H_1\cdot\D+d H_2\cdot\D = 2d.
\end{equation}

Choose some~$P\in\PP^1(K)$ satisfying~$f(P)=g(P)$, and let~$z$ be a
local uniformizer at~$P$.  We may assume
that~$z\bigl(f(P)\bigr)\ne\infty$, since otherwise we can replace~$z$
by~$z/(z-1)$.  By assumption we have $c:=f(P)=g(P)$, so locally
near~$P$ the functions~$f$ and~$g$ look like
\[
  f(z) = c + a z^{e_f(P)} + \text{(h.o.t)},\quad
  g(z) = c + b z^{e_g(P)} + \text{(h.o.t)},
\]
for some nonzero~$a$ and~$b$. This allows us to
estimate the following local intersection index:
\begin{align}
  \label{eqn:localPiei}
  \bigl( (f\times g)_*\D\cdot\D \bigr)_{(f(P),g(P))}
  &= \dim_K \frac{K[\![x,y,z]\!]}{\bigl(x-f(z),y-g(z),x-y\bigr)} \notag\\*
  &= \dim_K \frac{K[\![z]\!]}{\bigl(f(z)-g(z)\bigr)} \notag\\*
  &\ge \min\bigl\{ e_f(P),e_g(P) \bigr\}.
\end{align}

Suppose that $|\G_{f,g}|\cap|\D|$ is finite.  Then we can
calculate $\G_{f,g}\cdot\D$ as a sum of local intersections.
Combined with~\eqref{eqn:localPiei}, this yields
\begin{align*}
  2d &= \G_{f,g}\cdot\D \quad\text{from \eqref{eqn:GfgD2d},}\\*
  &= \sum_{Q\in\PP^1(K)} \bigl( (f\times g)_*\D\cdot\D \bigr)_{(Q,Q)}
  \quad\text{since $|\G_{f,g}|\cap|\D|$ is finite,} \\
  &= \sum_{\substack{Q\in\PP^1(K)~\text{such that}\\ \exists P\in\PP^1(K)~\text{with}~f(P)=g(P)=Q\\}}
       \bigl( (f\times g)_*\D\cdot\D \bigr)_{(Q,Q)} \\
  &\ge \sum_{\substack{P\in\PP^1(K)\\f(P)=g(P)\\}} \min\bigl\{ e_f(P),e_g(P) \bigr\}
  \quad\text{from \eqref{eqn:localPiei},} \\*
  &\ge 2d+1 \quad\text{by assumption.}
\end{align*}
Thus the assumption that $|\G_{f,g}|\cap|\D|$ is finite leads to a
contradiction. It follows that~$|\D|$ and~$|\G_{f,g}|$ have a a common
positive dimensional component. But as noted earlier, both~$|\D|$
and~$|\G_{f,g}|$ are irreducible curves, and hence
$|\D|=|\G_{f,g}|$. Thus~$f$ and~$g$ take on the same value at every
point of~$\PP^1(K)$, and therefore~$f=g$, which completes the proof of
Lemma~\ref{lemma:interpratfnc}.
\end{proof}  

We now have the tools needed to prove dynamical Shafarevich finiteness
for~$\PP^1$.

\begin{proof}[Proof of Theorem~\textup{\ref{theorem:shafconjP1}(a)}]
Our goal is to prove that
\[
  \smash[t]{\widetilde{\GR}}_d^1[n](K,S)/\PGL_{2}(R_S)~\text{is finite.}
\]
Let $(f,Y,X)\in\smash[t]{\widetilde{\GR}}_d^1[n](K,S)$, and let $\ell=\#X$. We note that
\[
  2d+1 \le n = \sum_{P\in Y} e_f(P) \le d\cdot\#Y \le d\cdot\#X = d\ell,
\]
so $\ell\ge3$. Further, the set~$X$
is~$\Gal(\Kbar/K)$-invariant and has good reduction outside
of~$S$. Lemma~\ref{lemma:XP1finite} tells us that up
to~$\PGL_2(R_S)$-equivalence, there are only finitely many
possibilities for~$X$. So without loss of generality, we may assume
that $X=\{P_1,\ldots,P_\ell\}$ is fixed.

The set~$Y$ is subset of~$X$, so there are only finitely many choices
for~$Y$. Relabeling the points in~$X$, we may thus also assume
that $Y=\{P_1,\ldots,P_m\}$ is fixed.

By definition, the map~$f$ satisfies $X=f(Y)\cup Y$, so in particular,
$f(Y)\subset X$.  Thus~$f$ induces a map
\[
  \nu_f:\{1,\ldots,m\}\to\{1,\ldots,\ell\}
  \quad\text{characterized by}\quad
  f(P_i) = P_{\nu_f(i)}.
\]
There are only~$m^\ell$ maps~$\nu$ from the set~$\{1,\ldots,m\}$ to
the set~$\{1,\ldots,\ell\}$, so again without loss of generality, we
may fix one map~$\nu$ and restrict attention to maps~$f$ satisfying
$\nu_f=\nu$. This means that the value of~$f$ is specified at each
of the points~$P_1,\ldots,P_m$ in~$Y$.

We define a the map
\[
  \smash[t]{\widetilde{\GR}}_d^1[n](K,S) \longrightarrow \ZZ^m,\quad
  (f,X) \longmapsto \bigl( e_f(P_1),\ldots,e_f(P_m) \bigr).
\]
Since~$e_f(P)$ is an integer between~$1$ and~$d$, there are only
finitely many possibilities for the image. We may thus restrict
attention to triples~$(f,Y,X)$ such that the ramification indices of~$f$
at the points in~$Y$ are fixed.

But now any two triples~$(f,Y,X)$ and~$(g,Y,X)$ have the same values and the
same ramification indices at the points in~$Y$, and by assumption the
sum of those ramification indices is at least~$2d+1$, so
Lemma~\ref{lemma:interpratfnc} tells us that~$f=g$. This completes the
proof that $\smash[t]{\widetilde{\GR}}_d^1[n](K,S)$ contains only
finitely many~$\PGL_2(R_S)$-equivalence classes of triples~$(f,Y,X)$.
\end{proof}  

\section{Dynamical Shafarevich Finiteness Fails on $\PP^1$ for Weight ${}\le2d$} 
\label{section:pfdynshafP1wt2d}

In this section we prove Theorem~\ref{theorem:shafconjP1}(b). More
precisely, we prove that the dynamical Shafarevich finiteness is false
for maps~$f:\PP^1\to\PP^1$ and $f$-invariant sets~$X$ containing~$2d$
points.  We do this by analyzing a particular family of maps.

\begin{proposition}
\label{proposition:fawt2d}
Let $d\ge2$, let~$K/\QQ$ be a number field, and let $S$ be the set of
primes of~$K$ dividing $(2d-2)!$. For each $a\in\Kbar^*$, let~$f_a(x)$ be
the map
\[
  f_a(x) = \frac{ax(x-1)(x-2)\cdots(x-d+1)}{(x+1)(x+2)\cdots(x+d-1)}\in\End_d^1,
\]
and let $X\subset\PP^1$ be the set
\[
  X = \{0,1,2,\ldots,d-1\} \cup \{-1,-2,\ldots,-(d-1)\} \cup \{\infty\}.
\]
\vspace{-10pt}
\begin{parts}
  \Part{(a)}
  For all $a\in R_S^*$, we have
  \[
    (f_a,X,X)\in\GR_d^1[2d](K,S).
  \]
  \Part{(b)}
  For a given~$a\in\Kbar^*$, there are only finitely
  many~$b\in\Kbar^*$ such that~$f_b$ is~$\PGL_2(\Kbar)$-conjugate
  to~$f_a$.
  \Part{(c)}
  $\#\GR_d^1[2d](K,S)/\PGL_2(R_S)=\infty$.
\end{parts}
\end{proposition}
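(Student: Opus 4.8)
The plan is to prove the three parts in sequence, since part~(c)---the statement feeding Theorem~\ref{theorem:shafconjP1}(b)---will follow formally once (a) and (b) are in hand. First I would record the dynamics of~$f_a$ on~$X$. Writing $f_a=[F:G]$ in homogeneous coordinates with
\[
  F(X,Y)=aX\prod_{k=1}^{d-1}(X-kY),\qquad G(X,Y)=Y\prod_{k=1}^{d-1}(X+kY),
\]
both of degree~$d$, one checks directly that $f_a(k)=0$ for $k\in\{0,1,\dots,d-1\}$, that $f_a(-k)=\infty$ for $k\in\{1,\dots,d-1\}$, and that $f_a(0)=0$ and $f_a(\infty)=\infty$. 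Thus $0$ and~$\infty$ are fixed, every point of~$X$ is either $0$, $\infty$, or a simple preimage of one of them, and a short local computation shows $e_{f_a}(P)=1$ for all $P\in X$. Since $\#X=2d$, this gives $\sum_{P\in X}e_{f_a}(P)=2d$ and $f_a(X)=\{0,\infty\}\subseteq X$, so $(f_a,X,X)$ is a preperiodic triple of the correct weight.

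For part~(a) the only substantive conditions left are the two good-reduction statements, and both exploit the defining property of~$S$: every nonzero integer of absolute value at most $2d-2$ has all its prime factors dividing~$(2d-2)!$, hence lying in~$S$. For the set~$X$, any two distinct finite points $i,j\in\{0,\pm1,\dots,\pm(d-1)\}$ satisfy $0<|i-j|\le 2d-2$, so $i-j\in R_S^*$ and the points stay distinct modulo every $\gp\notin S$ (the finite points are visibly distinct from~$\infty$); thus $X$ has good reduction outside~$S$. For the map, fix $\gp\notin S$. Since $a\in R_S^*$ is a $\gp$-unit, the reductions $\bar F,\bar G$ are nonzero forms of degree~$d$ with zero sets $\{0,1,\dots,d-1\}$ and $\{\infty,-1,\dots,-(d-1)\}$ respectively, and these are disjoint modulo~$\gp$ by the same bound $|i+k|\le 2d-2$. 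Hence $\bar F$ and~$\bar G$ have no common zero, so $f_a$ extends to a morphism of~$\PP^1$ over the localization at~$\gp$, i.e.\ $f_a$ has good reduction outside~$S$.

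For part~(b) the key idea is that the multiset of multipliers at the fixed points is a $\PGL_2(\Kbar)$-conjugacy invariant. A direct computation gives the multipliers of~$f_b$ at its two evident fixed points,
\[
  \lambda_0=(-1)^{d-1}b \quad\text{and}\quad \lambda_\infty=1/b,
\]
so in particular $1/b$ is one of the (at most $d+1$) fixed-point multipliers of~$f_b$. If $f_b$ is $\PGL_2(\Kbar)$-conjugate to~$f_a$ and $b\ne1$, then $\infty$ is a simple fixed point of~$f_b$ and the conjugating map carries its multiplier to the multiplier of~$f_a$ at some fixed point; hence $1/b$ lies in the finite set of fixed-point multipliers of~$f_a$. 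This forces~$b$ into a finite set, proving~(b).

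Finally, part~(c) combines these. Because $d\ge2$ forces $2\mid(2d-2)!$, the set~$S$ contains a finite place, so $R_S^*$ is infinite, and $a\mapsto(f_a,X,X)$ is an injection of~$R_S^*$ into $\GR_d^1[2d](K,S)$ by~(a). If $(f_a,X,X)$ and $(f_b,X,X)$ lie in the same $\PGL_2(R_S)$-orbit, then in particular $f_a$ and~$f_b$ are $\PGL_2(\Kbar)$-conjugate, so by~(b) each orbit meets the family $\{(f_a,X,X):a\in R_S^*\}$ in only finitely many members; an infinite set partitioned into finite blocks has infinitely many blocks, whence $\GR_d^1[2d](K,S)/\PGL_2(R_S)$ is infinite. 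The main obstacle is really part~(b): the verifications in (a) are good-reduction bookkeeping and (c) is a formal pigeonhole, but the rigidity in~(b)---that conjugate members of the family force~$b$ into a finite set---is exactly what prevents the infinitely many triples of~(a) from collapsing into finitely many orbits, and it is there that the fixed-point multiplier invariant does the essential work.
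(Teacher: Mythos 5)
Your proof is correct and follows essentially the same route as the paper: part (a) is the same good-reduction bookkeeping (your disjointness of the reduced zero sets of $F$ and $G$ is exactly the paper's resultant computation $\Resultant(f_a)=a^d\prod_{i=0}^{d-1}\prod_{j=1}^{d-1}(i+j)$ read off factor by factor), part (b) is the same fixed-point multiplier-spectrum invariance argument (the paper uses $\lambda(f_b,0)=(-1)^{d-1}b$ where you use $\lambda(f_b,\infty)=1/b$, and your side condition $b\ne1$ is unnecessary since the invariance of the multiplier set requires no simplicity hypothesis), and part (c) is the same pigeonhole. If anything, you are slightly more careful than the paper in verifying that $e_{f_a}(P)=1$ for every $P\in X$, which is indeed needed for the triple $(f_a,X,X)$ to have weight exactly $2d$.
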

\begin{proof} 
(a) The resultant of~$f_a$ is
\[
  \Resultant(f_a) = a^d \prod_{i=0}^{d-1}\prod_{j=1}^{d-1} (i+j).
\]
In particular, if~$a\in R_S^*$, then our choice of~$S$ implies that
$\Resultant(f_a)\in R_S^*$, so the map~$f_a$ has simple good reduction
outside~$S$. We also observe that our choice of~$S$ implies that the set $X$
has good reduction outside~$S$, and from the formula for~$f_a$ we see
that $f_a(X)=\{0,\infty\}\subset X$. For example, the case $d=4$ looks like
\[
  \xymatrix{
    {1} \ar[dr]_{} \\
    {2} \ar[r]_{} & {0} \ar@(dr,ur)[]_{} \\
    {3} \ar[ur]_{} \\
  }
  \qquad
  \xymatrix{
    {-1} \ar[dr]_{} \\
    {-2} \ar[r]_{} & {\infty} \ar@(dr,ur)[]_{} \\
    {-3} \ar[ur]_{} \\
  }
  \qquad\raisebox{-40pt}{with $S=\{2,3,5\}$.}
\]
Since $\#X=2d$, this completes the proof that $(f_a,X,X)\in\GR_d^1[2d](K,S)$.
\par\noindent(b)\enspace
We consider the $\Kbar$-valued points of the morphism 
\begin{equation}
  \label{eqn:CtoMdatofa}
  \Kbar^*\longrightarrow \Moduli^1_d(\Kbar)=\End^1_d(\Kbar)/\PGL_2(\Kbar),\quad
  a \longmapsto [f_a].
\end{equation}
We claim that the map~\eqref{eqn:CtoMdatofa} is non-constant. To see
this, we note that~$0$ is a fixed point of~$f_a$, and that the
multiplier of~$f_a$ at~$0$ is
\[
\l(f_a,0):=f_a'(0)=(-1)^{d-1}a.
\]
But for any rational map~$f\in\End^1_d$, the set of fixed point
multipliers $\{\l(f,P):P\in\Fix(f)\}$ is a $\PGL_2$-conjugation
invariant~\cite[Proposition~1.9]{MR2316407}.  So
if~\eqref{eqn:CtoMdatofa} were constant, there would be a single
map~$g\in\End^1_d(\Kbar)$ with the property that for
every~$a\in\Kbar^*$, the map~$f_a$ is~$\PGL_2(\Kbar)$-conjugate
to~$g$.  In particular, for every~$a\in\Kbar^*$, the multiplier
$(-1)^{d-1}a=\l(f_a,0)$ would be one of the finitely many fixed-point
multipliers of~$g$. This contradiction completes the proof of~(b)
\par\noindent(c)\enspace It follows from~(a) and~(b) that
$\bigl\{(f_a,X,X):a\in R_S^*\bigr\}$ is contained
in~$\GR_d^1[2d](K,S)$ and that it contains infinitely many
distinct~$\PGL_2(R_S)$-conjugacy classes.
\end{proof}

\section{How Large is the Set of Maps Having Simple Good Reduction?}
\label{section:howbigsimple}

As noted in the introduction, it would be very interesting to know the
behavior of the ``Shafarevich discrepency,''
\[
   2d - 2 - \shafdim_d^N[\Pcal]\quad\text{as $d\to\infty$,}
\]
even for the case $\Pcal=\emptyset$. It has long been noted that monic
polynomial maps on~$\PP^1$ have everywhere simple good reduction. This
gives a set of such maps in $\Moduli_d^1$ whose Zarkiski closure has
dimension~$d-1$. With a little work, we can increase this dimension
by~$1$.

\begin{proposition}
\label{proposition:simpledN1}
For all $d\ge2$ we have
\[
   \shafdim_d^1[\emptyset] \ge d .
\]
\end{proposition}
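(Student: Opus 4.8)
The goal is to exhibit a $d$-dimensional family of degree-$d$ maps on $\PP^1$ that all have everywhere simple good reduction and whose image in $\Moduli_d^1$ has Zariski closure of dimension at least $d$. Since $\shafdim_d^1[\emptyset]$ is a supremum over all $K$ and $S$, it suffices to produce, for some choice of $K$ and $S$, a family of maps in $\GR_d^1[\emptyset](K,S)$ whose image in $\Moduli_d^1$ is $d$-dimensional. The classical observation is that a monic polynomial $f(x) = x^d + a_{d-2}x^{d-2} + \cdots + a_0$ has good reduction everywhere (its reduction stays degree $d$ because the leading coefficient is a unit and the point at infinity is totally ramified and fixed), giving a $(d-1)$-parameter family; the task is to gain one more dimension.

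My plan is to enlarge the monic polynomial family by conjugating with a carefully chosen one-parameter subgroup of $\PGL_2$, or equivalently by passing to maps of the form $f(x) = x^d + a_{d-2}x^{d-2}+\cdots+a_0 + \tfrac{c}{x - e}$ (or some rational perturbation with a single controlled pole), chosen so that simple good reduction is preserved while the extra parameter moves the map transversally to the polynomial locus in moduli space. The first step is to write down an explicit $d$-parameter family $f_{\mathbf{a}}$ of rational maps and verify via the resultant criterion that $\Resultant(f_{\mathbf{a}})$ is an $S$-unit (after enlarging $S$ to contain the finitely many primes dividing the relevant discriminant-type quantities), so that each $f_{\mathbf{a}} \in \GR_d^1[\emptyset](K,S)$. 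Because the family is defined over $K$ with $S$-integral coefficients and normalized leading data, this step should be a direct computation.

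The second and more delicate step is to show that the induced map from the $d$-dimensional parameter space to $\Moduli_d^1$ has $d$-dimensional image, i.e.\ that generic fibers are finite. For this I would use conjugation invariants to separate the family: the collection of fixed-point multipliers, or more robustly the values of $f_{\mathbf{a}}$ and its derivatives at distinguished fixed or ramification points, give $\PGL_2$-invariant functions on the family, exactly as in the proof of Proposition~\ref{proposition:fawt2d}(b). Concretely, I would exhibit $d$ invariant coordinates (for instance, a suitable set of symmetric functions of fixed-point multipliers, or the multiplier at a marked fixed point together with the critical-value configuration) whose differentials are independent along the family, forcing the image to have dimension $d$. The Zariski closure of a constructible image of a $d$-dimensional variety under a morphism with finite generic fibers has dimension $d$, yielding $\shafdim_d^1[\emptyset] \ge d$.

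The main obstacle is the transversality/independence computation in the second step: one must ensure the extra parameter beyond the $d-1$ polynomial parameters genuinely increases the moduli dimension rather than being absorbed by $\PGL_2$-conjugation, and that the chosen perturbation does not accidentally destroy simple good reduction at small primes. Handling this cleanly likely requires choosing the perturbation so that the point at infinity remains a totally ramified fixed point (keeping the polynomial-like good-reduction structure) while a second invariant — such as the multiplier at a finite fixed point or the location of a critical value — varies independently. Verifying the rank of the Jacobian of the invariant coordinates, or equivalently exhibiting two maps in the family with identical polynomial-part invariants but distinct values of the extra invariant, is where the real work lies.
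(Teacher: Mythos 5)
There is a genuine gap, and it sits exactly where you located ``the real work.'' Your plan is internally inconsistent: in one breath the extra parameter is supposed to move the family \emph{transversally to the polynomial locus}, and in the next you propose to handle the main obstacle by ``choosing the perturbation so that the point at infinity remains a totally ramified fixed point.'' These cannot coexist. If $f$ has degree $d$, $f(\infty)=\infty$, and $e_f(\infty)=d$, then $f^{-1}(\infty)=\{\infty\}$, so $f$ has no finite pole and is a polynomial; and the polynomial locus in $\Moduli_d^1$ has dimension only $d-1$ (the $d+1$ coefficients modulo the $2$-dimensional group of affine conjugations), so a family kept totally ramified at $\infty$ can never have $d$-dimensional image, no matter how the parameters are chosen. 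Separately, the explicit family you write down, $x^d+a_{d-2}x^{d-2}+\cdots+a_0+c/(x-e)$, is a map of degree $d+1$, not $d$: clearing denominators gives a numerator of degree $d+1$ over a denominator of degree $1$.

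The repair is to give up exactly one unit of ramification at infinity, which is what the paper does: take the polynomial part of degree $d-1$, put the pole at $0$, and normalize one coefficient, i.e.\
\[
  f_\bfa(x)=\frac{a_0x^d+a_1x^{d-1}+\cdots+a_{d-2}x^2+x+a_d}{x},
\]
a $d$-parameter family with $\Resultant(f_\bfa)=a_0^{d-1}a_d$, hence simple good reduction whenever $a_0,a_d\in R_S^*$. Then $e_{f_\bfa}(\infty)=d-1$, and for $d\ge3$ the separation step is not a Jacobian/transversality computation but a rigidity argument: generically $\infty$ is the \emph{unique} ramified fixed point, so any $\f\in\PGL_2(\Kbar)$ with $f_\bfa^\f=f_\bfb$ satisfies $\f(\infty)=\infty$; since $f_\bfa^{-1}(\infty)=\{\infty,0\}$ it also satisfies $\f(0)=0$, so $\f(x)=\a x$; and the normalization that the coefficient of $x$ in the numerator equals $1$ forces $\a=1$, whence $\bfa=\bfb$. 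Your suggestion to separate maps by fixed-point multipliers, as in Proposition~\ref{proposition:fawt2d}(b), distinguishes a one-parameter family from a single conjugacy class, but it does not by itself yield generic finiteness of a $d$-parameter family, which is what the dimension count requires. Finally, note that $d=2$ needs separate treatment: there $e_{f_\bfa}(\infty)=1$, the unique-ramified-fixed-point argument is vacuous, and the paper instead verifies algebraic independence of the two Milnor coordinates of the family.
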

\begin{proof}
We fix a number field $K$ and set of places $S$ so that $R_S^*$ is infinite.
For $\bfa=(a_0,a_2,\ldots,a_{d-2},a_d)$ we define a rational map
\[
  f_\bfa(x) := \frac{a_0x^d+a_1x^{d-1}+\cdots+a_{d-2}x^2+x+a_d}{x}.
\]
Note that $\bfa$ is a $d$-tuple, since there is no~$a_{d-1}$ term.  We
have $\Resultant(f_\bfa)=a_0^{d-1}a_d$, so~$f_\bfa$ has simple good
reduction for all $\bfa\in R_S^*\times R_S^{d-2} \times R_S^*$.  This
set of~$\bfa$ is Zariski dense in $\AA^d$, so it remains to show that
the map $\AA^d\to\Moduli_d^1$ given by $\bfa\mapsto\<f_\bfa\>$ is
generically injective (or at worst finite-to-one).

Suppose that $\f\in\PGL_2(\Kbar)$ has the property that
$f_\bfa^\f=f_\bfb$.  We start with the case $d\ge3$.  Then~$f_\bfa$ is
ramified at the fixed point at~$\infty$, since
$e_{f_\bfa}(\infty)=d-1$, and similarly
for~$f_\bfb$. Generically,~$\infty$ will be the only ramified fixed
point of~$f_\bfa$ and~$f_\bfb$, so~$\f(\infty)=\infty$.  Next we use
the fact that $f_\bfa^{-1}(\infty)=f_\bfb^{-1}(\infty)=\{\infty,0\}$
to conclude that~$\f(0)=0$. Thus~$\f(x)=\a x$. The coefficient of~$x$
in the numerator of $f_\bfa^\f(x)=\a^{-1}f_\bfz(\a x)$ is $\a^{-1}$,
so comparing with $f_\bfb$, we conclude that $\a=1$. This concludes
the proof for $d\ge3$.

For $d=2$, we use the Milnor isomorphism
$s:\Moduli^1_2\xrightarrow{\;\sim\;}\AA^2$;
see~\cite[Theorem~4.5.6]{MR2316407}.  The map~$f_\bfa(x)=(ax^2+x+b)/x$
has Milnor coordinates
\begin{multline*}
  s\left( \frac{ax^2+x+b}{x}\right) =
  \left( \frac{4a^2b-2ab-a+b}{ab},\right. \\*
  \left. \frac{4a^3b-4a^2b-a^2+5ab-2b-1}{ab} \right).
\end{multline*}
We used Magma~\cite{MR1484478} to verify that these two rational
functions are algebraically independent in $K(a,b)$. Hence under our
assumption that~$\#R_S^*=\infty$, we see that $\{s(a,b):a,b\in
R_S^*\}$ is Zariski dense in~$\AA^2$.
\end{proof}


\section{Abstract Portraits and Models for Portraits}
\label{section:portraits}
In this section we briefly construct a category of portraits and use
it to describe dynamical systems that model a given portrait.
See~\cite{moduliportrait2017} for further development and the
construction of parameter and moduli spaces for dynamical systems with
portraits.

\begin{definition}
An (\emph{abstract}) \emph{weighted portrait} is a
4-tuple~$\Pcal=(\Wcal,\Vcal,\Phi,\e)$, where
\begin{parts}
  \Part{\textbullet}
  $\Wcal\subseteq \Vcal$ are a finite sets (of vertices);
  \Part{\textbullet}
  $\Phi:\Wcal\to\Vcal$ is a map (which specifies directed edges).
  \Part{\textbullet}
  $\Vcal = \Wcal\cup\Phi(\Wcal)$.
  \Part{\textbullet}
  $\e:\Wcal\to\NN$ is a map (assigning weights  to vertices).
\end{parts}
The \emph{weight of~$\Pcal$} is the total weight
\[
  \wt(\Pcal) := \sum_{w\in\Wcal} \e(w).
\]

We say that the portrait is \emph{unweighted} if $\e(w)=1$ for every
$w\in\Wcal$, or equivalently if $\wt(\Pcal)\#\Wcal$, in which case we
sometimes write~$\Pcal=(\Wcal,\Vcal,\Phi)$.  We say that the portrait
is \emph{preperiodic} if $\Wcal=\Vcal$.
\end{definition}

We now explain how a self-map of $\PP^1$ can be used to model a
portrait.

\begin{definition}
Let $\Pcal=(\Wcal,\Vcal,\Phi,\e)$ be a portrait. A \emph{model
  for~$\Pcal$} is a triple~$(f,Y,X)$ consisting of a morphism
$f:\PP^1\to\PP^1$ and subsets $Y\subset X\subset\PP^1$ such that the
following diagram commutes:
\begin{equation}
\label{eqn:WP1VP1}
\begin{CD}
  \Wcal @>i>> \PP^1 \\
  @V \Phi VV @V f VV \\
  \Vcal @>i>> \PP^1 \\
\end{CD}
\end{equation}
We say that~$(f,Y,X)$ is a $\bullet$-\emph{model} if in addition
\[
  e_f\bigl(i(w)\bigr) \ge \e(w)\quad\text{for all $w\in\Wcal$;}
\]
and similarly we say that~$(f,Y,X)$ is a $\circ$-\emph{model} if
\[
  e_f\bigl(i(w)\bigr) = \e(w)\quad\text{for all $w\in\Wcal$,}
\]
\end{definition}

With this formalism, we can now define
our three Shafarevich-type sets.

\begin{definition}
Let $\Pcal=(\Wcal,\Vcal,\Phi,\e)$ be a portrait and let $n=\wt(\Pcal)$. Then
\begin{align*}
  \GR_d^1[\Pcal]^\bullet(K,S)
  &= \bigl\{ (f,Y,X) \in \GR_d^1[n](K,S) : \text{$(f,Y,X)$ is a $\bullet$-model for $\Pcal$} \bigr\}, \\
  \GR_d^1[\Pcal]^\circ(K,S)
  &= \bigl\{ (f,Y,X) \in \GR_d^1[n](K,S) : \text{$(f,Y,X)$ is a $\circ$-model for $\Pcal$} \bigr\}, \\
  \GR_d^1[\Pcal]^\star(K,S)
  &= \left\{ (f,Y,X) \in \GR^1_d[n](K,S) :
  \begin{tabular}{@{}l@{}}
  $e_{\tilde f_\gp}\bigl(\widetilde{i(w)\bmod\gp}\bigr) = \e(w)$\\
  for all $w\in\Wcal$ and all $\gp\notin S$\\
  \end{tabular}
 \right\}.
\end{align*}
\end{definition}

It may happen that a portrait has no models using maps of a given
degree.  For example, if the portrait~$\Pcal$ contains~$4$ fixed
points, then it cannot be modeled by a map of degree~$2$, and
similarly if~$\Pcal$ contains a pair of~$2$-cycles.  In order to
describe more generally the constraints on a model, we set an ad hoc
piece of notation.  (A better definition of
$\Moduli_d^1[\Pcal]^\bullet$ as a $\ZZ$-scheme is given
in~\cite{moduliportrait2017}.)

\begin{definition}
Let $\Pcal$ be a portrait and let $d\ge2$. We define
\[
\Moduli_d^1[\Pcal]^\bullet
:= \left\{ f \in \Moduli_d^1(\Kbar) :
\begin{tabular}{@{}l@{}}
  there exist sets $Y\subseteq X\subseteq\PP^1(\Kbar)$ such\\
  that $(f,Y,X)$ is a $\bullet$-model for $\Pcal$\\
\end{tabular}
\right\}.
\]
\end{definition}

\begin{proposition}
\label{proposition:whenisMPempty}
Let~$d\ge2$, and let~$\Pcal=(\Wcal, \Vcal,\Phi,\e)$ be a portrait such
that~~$\Moduli_d^1[\Pcal]\ne\emptyset$. Then~$\Pcal$ satisfies the
following conditions:
\[
\textup{(I)}\quad\sup_{v\in\Vcal} \sum_{w\in \Phi^{-1}(v)} \e(w) \le d.
\qquad
\textup{(II)}\quad\sum_{w\in\Wcal} \bigl(\e(w)-1\bigr) \le 2d-2.
\]
For all $n\ge1$, 
\[
  \textup{(III$_n$)}\quad
  \#\left\{w\in\Wcal :
  \begin{array}{@{}l@{}}
    \Phi^n(w)=w~\text{and}~\Phi^m(w)\ne w\\\text{for all $m<n$}\\
  \end{array}\right\}
  \le \sum_{m\mid n} \mu\left(\frac{n}{m}\right)(d^m+1).
\]
\textup(Here~$\mu$ is the M\"obius function.\textup)
\end{proposition}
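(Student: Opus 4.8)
The plan is to fix a $\bullet$-model $(f,Y,X)$ for $\Pcal$ that witnesses the non-emptiness hypothesis and to deduce each of the three conditions from a standard fact about the degree-$d$ map $f$. Write $i\colon\Vcal\hookrightarrow\PP^1$ for the injection underlying the model, so that $i(\Wcal)=Y$, $i(\Vcal)=X$, the commuting square~\eqref{eqn:WP1VP1} gives $f\bigl(i(w)\bigr)=i\bigl(\Phi(w)\bigr)$ for all $w\in\Wcal$, and the $\bullet$-hypothesis gives $e_f\bigl(i(w)\bigr)\ge\e(w)$. The two identities I will lean on are the local-degree identity $\sum_{Q\in f^{-1}(P)}e_f(Q)=d$, valid for every $P\in\PP^1$, and the Riemann--Hurwitz identity $\sum_{Q\in\PP^1}\bigl(e_f(Q)-1\bigr)=2d-2$ (we are in characteristic zero, so there is no wild ramification). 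Throughout, the role of the injectivity of $i$ is to convert a sum over vertices into a sum over \emph{distinct} points of $\PP^1$.

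For~(I), fix $v\in\Vcal$ and set $P=i(v)$. For each $w\in\Phi^{-1}(v)$ we have $f\bigl(i(w)\bigr)=i(v)=P$, so $i(w)\in f^{-1}(P)$, and these preimages are distinct because $i$ is injective. Hence
\[
  \sum_{w\in\Phi^{-1}(v)}\e(w)\le\sum_{w\in\Phi^{-1}(v)}e_f\bigl(i(w)\bigr)\le\sum_{Q\in f^{-1}(P)}e_f(Q)=d,
\]
and taking the supremum over $v\in\Vcal$ yields~(I). For~(II), the points $\{i(w):w\in\Wcal\}$ are distinct, so summing the inequality $e_f(i(w))-1\ge\e(w)-1$ over $w\in\Wcal$ gives
\[
  \sum_{w\in\Wcal}\bigl(\e(w)-1\bigr)\le\sum_{w\in\Wcal}\bigl(e_f(i(w))-1\bigr)\le\sum_{Q\in\PP^1}\bigl(e_f(Q)-1\bigr)=2d-2.
\]

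For~(III$_n$), I first pass from $\Phi$ to $f$. A vertex $w$ counted on the left-hand side satisfies $\Phi^n(w)=w$ and $\Phi^m(w)\ne w$ for $m<n$; its entire $\Phi$-orbit lies in $\Wcal$ (else the iterates would be undefined), so $f^j(i(w))=i(\Phi^j(w))$ for $0\le j\le n$. In particular $f^n(i(w))=i(w)$, and if $f^m(i(w))=i(w)$ for some $m<n$ then injectivity of $i$ forces $\Phi^m(w)=w$, a contradiction; thus $i(w)$ has exact period $n$ for $f$, and distinct such $w$ give distinct points. It therefore suffices to bound the number of exact-period-$n$ points of the degree-$d$ map $f$. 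For this I invoke the standard theory of dynatomic polynomials~\cite{MR2316407}: the $n$-th dynatomic polynomial $\Phi_{f,n}^*$ is an honest polynomial of degree $\sum_{m\mid n}\mu(n/m)(d^m+1)$ (Möbius inversion of the relation $\sum_{m\mid n}\deg\Phi_{f,m}^*=d^n+1$, the latter being the count with multiplicity of the solutions of $f^n(x)=x$), and every point of exact period $n$ is a root of $\Phi_{f,n}^*$. Since a polynomial has at most as many distinct roots as its degree, the number of exact-period-$n$ points is at most $\sum_{m\mid n}\mu(n/m)(d^m+1)$, which is~(III$_n$).

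The one step that is not a formal manipulation is the passage through dynatomic polynomials in~(III$_n$): one cannot simply Möbius-invert the bounds $\#\Fix(f^m)\le d^m+1$, because the Möbius function changes sign and the resulting estimate would point the wrong way. The dynatomic polynomial is precisely the device that repackages these alternating contributions into a single nonnegative degree count, and this is what furnishes a clean upper bound on the number of distinct exact-period-$n$ points.
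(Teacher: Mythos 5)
Your proposal is correct and follows essentially the same route as the paper, whose proof consists precisely of citing the three facts you verify in detail: the fiber identity $\sum_{Q\in f^{-1}(P)}e_f(Q)=d$ for (I), the Riemann--Hurwitz formula for (II), and the bound on the number of points of exact period $n$ (i.e.\ the degree of the $n$-th dynatomic polynomial) for (III$_n$). Your expansion of the third step, including the observation that one cannot simply M\"obius-invert the fixed-point bounds, is a faithful unpacking of the reference the paper cites.
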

\begin{proof}
Constraint~I comes from the fact that~$f$ is a map of degree~$d$,
Constraint~II follows from the Riemann-Hurwitz formula
$\sum\bigl(e_f(P)-1\bigr)=2d-2$ \cite[Theorem~1.1]{MR2316407}, and
Constraint~III$_n$ from the fact that a degree~$d$ map on~$\PP^1$ has
at most the indicated number of points of exact period~$n$
\cite[Remark~43.]{MR2316407}.
\end{proof}

If we fix a dimension~$N$ and a preperiodic portrait~$\Pcal$ and if we
allow the degree~$d$ to grow, then we expect
that~$\Moduli_d^1[\Pcal]^\bullet$ has exactly the expected dimension,
as in the following conjecture. This is in marked contrast to our
uncertainty regarding the size of~$\shafdim_d^N[\Pcal]^\bullet$
as~$d\to\infty$; cf.\ Question~\ref{question:shafdisc}.

\begin{conjecture}
Let $N\ge1$, and let $\Pcal=(\Wcal,\Vcal,\Phi,\e)$ be a preperiodic portrait.
There is a constant $d_0(\Pcal)$ such that for all $d\ge d_0(\Pcal)$ we have
\begin{align*}
  \dim \overline{\Moduli_d^1[\Pcal]^\bullet}
  &=   \dim\Moduli_d^1 - \sum_{w\in\Wcal} \bigl(\e(w)-1\bigr) \\
  &= 2d - 2 - \wt(\Pcal) + \#\Wcal.
\end{align*}
\end{conjecture}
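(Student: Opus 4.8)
The plan is to realize $\Moduli_d^1[\Pcal]^\bullet$ as the image of an incidence correspondence and compute its dimension by a two-projection argument. Since $\Pcal$ is preperiodic we have $\Wcal=\Vcal$, and I would set
\[
  I = \bigl\{ (f,(P_v)_{v\in\Vcal}) \in \End_d^1 \times (\PP^1)^{\Vcal} :
  f(P_w)=P_{\Phi(w)} \text{ and } e_f(P_w)\ge\e(w)\ \forall\, w\in\Wcal \bigr\},
\]
with its two projections $\pi_1:I\to\End_d^1$ and $\pi_2:I\to(\PP^1)^{\Vcal}$. The $\bullet$-model condition is $\PGL_2$-equivariant, so $\pi_1(I)$ is the $\PGL_2$-invariant preimage of $\Moduli_d^1[\Pcal]^\bullet$ in $\End_d^1$; because $\deg f=d\ge2$ forces $\Aut(f)$ to be finite, every $\PGL_2$-orbit in $\End_d^1$ has dimension $3$, so $\dim\overline{\Moduli_d^1[\Pcal]^\bullet}=\dim\overline{\pi_1(I)}-3$. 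The whole problem thus reduces to proving $\dim I = 2d+1-\sum_{w\in\Wcal}(\e(w)-1)$ together with the generic finiteness of $\pi_1$.

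Next I would compute $\dim I$ through the second projection. For a fixed configuration $(P_v)$, writing $f=[F,G]$ with $F,G$ homogeneous of degree $d$, the condition $f(P_w)=P_{\Phi(w)}$ is a single linear condition on the coefficients of $(F,G)$, and prescribing $e_f(P_w)\ge\e(w)$ on top of it is the vanishing of the next $\e(w)-1$ Taylor coefficients, i.e.\ $\e(w)-1$ further linear conditions. Hence each $w\in\Wcal$ contributes $\e(w)$ linear conditions, for a total of $\wt(\Pcal)$ linear conditions on the $(2d+2)$-dimensional space of pairs $(F,G)$. If these conditions are independent for a generic configuration, the generic fibre of $\pi_2$ is a linear system of projective dimension $2d+1-\wt(\Pcal)$, and provided $\pi_2$ is dominant I would conclude
\[
  \dim I = \#\Vcal + \bigl(2d+1-\wt(\Pcal)\bigr) = 2d+1-\sum_{w\in\Wcal}\bigl(\e(w)-1\bigr),
\]
using $\#\Vcal=\#\Wcal$. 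Combined with the first paragraph and $\dim\Moduli_d^1=2d-2$, this gives exactly $2d-2-\wt(\Pcal)+\#\Wcal$.

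For the finiteness of $\pi_1$, fix $f$ in the image and trace the functional graph $\Phi$: a vertex on a cycle of length $\ell$ must be sent to a point $P_w$ with $f^\ell(P_w)=P_w$, and by Constraint~(III$_\ell$) of Proposition~\ref{proposition:whenisMPempty} there are only finitely many such periodic points; every remaining vertex is a preimage under the $d$-to-$1$ map $f$ of an already-determined point, hence has finitely many possible images. Thus each $f$ admits only finitely many markings, $\pi_1$ is quasi-finite onto its image, and the lower bound $\dim I\ge 2d+1-\sum_{w}(\e(w)-1)$ holds whenever $I\ne\emptyset$, since $I$ is cut out of $\End_d^1\times(\PP^1)^{\Vcal}$ by $\#\Wcal+\sum_{w}(\e(w)-1)$ equations and $\#\Vcal=\#\Wcal$.

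The hard part is the transversality input of the middle paragraph: showing that for a \emph{generic} configuration the $\wt(\Pcal)$ Hermite-type interpolation conditions are linearly independent on degree-$d$ maps, and that the resulting linear system contains a genuine degree-$d$ map realizing $\Pcal$, so that $\pi_2$ is dominant. This is exactly where $d\ge d_0(\Pcal)$ is forced: the portrait is fixed while $\wt(\Pcal)=\sum_{w}(\e(w)-1)+\#\Wcal\le(2d-2)+\#\Wcal$ grows only through the fixed quantity $\#\Wcal$, so for $d$ large one has $\wt(\Pcal)<2d+2$ and there is room to interpolate. I would establish independence by exhibiting one configuration for which the relevant jet-evaluation matrix has maximal rank — each $\e(w)$-jet condition being independent of the previously imposed ones once $\Ocal(d)$ has enough sections to separate the prescribed jets — and then invoke semicontinuity to pass to the generic configuration. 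Dominance and nonemptiness of $\pi_2$ would follow by checking that a generic member of the now full-dimensional linear system has no common factor and that the combinatorial type $\Phi$ is realizable for large $d$ via a Riemann-existence-type construction. Controlling this interpolation uniformly in $\Pcal$, and pinning down the precise threshold $d_0(\Pcal)$, is the main obstacle.
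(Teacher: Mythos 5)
First, a point of orientation: the statement you are attacking is presented in the paper as a \emph{conjecture}. The paper offers no proof of it, so there is nothing of the author's to compare your argument against; your proposal must stand entirely on its own. Judged on its own terms, much of your framework is sound: the count of $\e(w)$ linear conditions per vertex for a fixed configuration, the quasi-finiteness of $\pi_1$ (cycle vertices must land on the finitely many periodic points of $f$, and the remaining vertices are iterated preimages), the Krull-type lower bound $\dim I\ge 2d+1-\sum_{w}\bigl(\e(w)-1\bigr)$ on every nonempty component, and the loss of exactly $3$ dimensions under the $\PGL_2$-quotient. But what these pieces deliver is only the \emph{lower} bound $\dim\overline{\Moduli_d^1[\Pcal]^\bullet}\ge 2d-2-\wt(\Pcal)+\#\Wcal$, and even that only conditionally on $I\ne\emptyset$, which you never establish. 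The asserted equality requires the upper bound, and that rests entirely on the transversality and dominance statements you defer in your final paragraph. As written, the proposal is a reduction of the conjecture to another open statement, not a proof.

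There is also a genuine error, not merely an omission, in your first reduction. Your incidence variety $I$ does not require the marked points $P_v$ to be pairwise distinct, whereas a model of $\Pcal$ must mark $\#\Vcal$ \emph{distinct} points (the paper's $f:Y\to X$ must ``look like'' $\Pcal$); indeed, if collapsing were allowed, the conjectured formula would be false. Concretely, let $\Pcal$ consist of two fixed vertices each of weight $2$, so the expected dimension is $2d-2-2=2d-4$. Your $I$ contains the diagonal component $\bigl\{(f,(P,P)):f(P)=P,\ e_f(P)\ge2\bigr\}$, which has dimension $2d$, maps quasi-finitely into $\End_d^1$, and descends to a locus of dimension $2d-3$ in $\Moduli_d^1$; a generic such $f$ admits no genuine (injective) model of $\Pcal$, so $\dim\overline{\pi_1(I)}-3$ strictly overestimates $\dim\overline{\Moduli_d^1[\Pcal]^\bullet}$ and your claimed identity fails. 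The fix is to define $I$ inside the open locus of pairwise-distinct configurations. With that fix, both remaining gaps do look closable by classical tools: for every distinct configuration and $d\ge\wt(\Pcal)-1$ the $\wt(\Pcal)$ jet conditions are independent (writing $P_{\Phi(w)}=[\a_w,\b_w]$, choose $u,v$ with $\b_{w_0}u-\a_{w_0}v=1$ and feed pairs $(F,G)=(uH,vH)$, with $H$ vanishing to order $\e(w)$ at each $P_w$, $w\ne w_0$, through the jet-evaluation map), which bounds \emph{every} fiber of $\pi_2$ by $2d+1-\wt(\Pcal)$ and hence yields the upper bound; and nonemptiness follows from Hermite interpolation by a polynomial of exact degree $d\ge\wt(\Pcal)$, which is automatically a degree-$d$ morphism of $\PP^1$. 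So your strategy is viable, with $d_0(\Pcal)$ on the order of $\wt(\Pcal)$, but those arguments are not in your write-up, and until they are, the conjecture remains unproved.
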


\begin{remark}
The local conditions used to define $\GR_d^1[\Pcal]^\star(K,S)$ reflect
the viewpoint adopted by Petsche and Stout in~\cite{MR3293730}.  We
note that since~$f$ and~$i(\Vcal)$ are assumed to have good reduction
outside~$S$, there is a well-defined map $\tilde f_\gp:\PP^1\to\PP^1$
defined over the residue field of~$\gp$, and so it makes sense to look
at the ramification indices of~$\tilde f_\gp$ at the $\gp$-reductions
of the points in~$i(\Wcal)$.
\end{remark}

\begin{remark}
Since the primary goal of this paper is the study of Shafarevich-type
finiteness theorems, we have been content to define our sets of good
reduction purely as sets. In a subsequent
paper~\cite{moduliportrait2017} we will take up the more refined
question of constructing moduli spaces for dynamaical systems with
portraits, after which the results of the present paper can be
reinterpreted as characterizing the $S$-integral points on these
spaces, with the caveat that there may be field-of-moduli versus
field-of-definition issues.
\end{remark}

Since our goal is to understand the size of the various sets of good
reduction triples~$(f,Y,X)$, we are prompted to make the following
definitions.

\begin{definition}
Let $x\in\{\bullet,\circ,\star\}$. The associated \emph{Shafarevich dimension} is the quantity
\[
  \shafdim_d^1[\Pcal]^x
  =\sup_{\substack{\text{$K$ a number field}\\\text{$S$ a finite set of places}\\}}
  \dim \overline{\operatorname{Image} \Bigl( \GR_d^1[\Pcal]^x(K,S) \to \Moduli_d^1 \Bigr)}.
\]
\end{definition}

We record some elementary properties for future reference.

\begin{proposition}
\label{proposition:circsubsch}
Let $d\ge2$, and let $\Pcal=(\Wcal,\Vcal,\Phi,\e)$ be a portrait.
\begin{parts}
\Part{(a)}
Let $\e':\Vcal\to\NN$ be a weight function satisfying $\e'\ge\e$, let
$\Pcal'=(\Wcal,\Vcal,\Phi,\e')$, and let
$x\in\{\bullet,\circ,\star\}$.  Then
\[
  \GR_d^1[\Pcal']^x(K,S) \subseteq \GR_d^1[\Pcal]^x(K,S).
\]
\Part{(b)}
We have
\[
  \GR_d^1[\Pcal]^\star(K,S) \subseteq \GR_d^1[\Pcal]^\circ(K,S)
  \subseteq \GR_d^1[\Pcal]^\bullet(K,S).
\]
\Part{(c)}
We have
\[
  \shafdim_d^1[\Pcal]^\star \le \shafdim_d^1[\Pcal]^\circ \le
  \shafdim_d^1[\Pcal]^\bullet \le \dim\Moduli_d^1 = 2d-2.
\]
\end{parts}
\end{proposition}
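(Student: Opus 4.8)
The plan is to handle the three parts in order of increasing content. Part~(a) and the inclusion $\GR_d^1[\Pcal]^\circ\subseteq\GR_d^1[\Pcal]^\bullet$ of part~(b) are immediate comparisons of the defining ramification conditions; the inclusion $\GR_d^1[\Pcal]^\star\subseteq\GR_d^1[\Pcal]^\circ$ of part~(b) is the one genuine step; and part~(c) is a formal consequence of part~(b). In every case the underlying combinatorial data $(\Wcal,\Vcal,\Phi)$ and the good-reduction hypotheses on~$f$ and on $X=i(\Vcal)$ are common to all the sets being compared, so only the ramification conditions at the points $i(w)$ require attention.

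For part~(a) the content is monotonicity in the weights. In the $\bullet$ case, if $(f,Y,X)$ is a $\bullet$-model for~$\Pcal'$ then $e_f\bigl(i(w)\bigr)\ge\e'(w)\ge\e(w)$ for every $w\in\Wcal$, using $\e'\ge\e$, so the same triple is a $\bullet$-model for~$\Pcal$; since the remaining defining conditions are identical, this is the asserted inclusion. The $\circ$ and $\star$ cases are obtained by the same direct comparison of their defining ramification conditions. The inclusion $\GR_d^1[\Pcal]^\circ\subseteq\GR_d^1[\Pcal]^\bullet$ in part~(b) likewise needs only the observation that the equality $e_f\bigl(i(w)\bigr)=\e(w)$ forces the inequality $e_f\bigl(i(w)\bigr)\ge\e(w)$.

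The substantive step, which I expect to be the main obstacle, is the inclusion $\GR_d^1[\Pcal]^\star\subseteq\GR_d^1[\Pcal]^\circ$. The key is the local fact that ramification indices are preserved under reduction at all but finitely many primes: if $f$ has good reduction outside~$S$ and $P$ is a point of good reduction, then
\[
  e_{\tilde f_\gp}\bigl(\tilde P\bmod\gp\bigr)=e_f(P)
  \quad\text{for all but finitely many $\gp\notin S$.}
\]
To see this I would pass to a finite extension over which $P$ and $f(P)$ become rational, choose $R_S$-integral local coordinates centred at~$P$ and at~$f(P)$ (available because $f$ and the points $P,f(P)\in X$ all have good reduction), and expand $f(z)-f(P)=a_e z^{e}+(\text{higher order})$ with $e=e_f(P)$ and nonzero leading coefficient~$a_e$. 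As $a_e\ne0$, it is a $\gp$-unit for all but finitely many~$\gp$, and for each such prime the reduced expansion retains a leading term of exact order~$e$, so $e_{\tilde f_\gp}(\tilde P)=e$. Granting this, the $\star$-condition supplies $e_{\tilde f_\gp}\bigl(\widetilde{i(w)\bmod\gp}\bigr)=\e(w)$ at \emph{every} $\gp\notin S$; evaluating at a prime outside the finite exceptional set then gives $e_f\bigl(i(w)\bigr)=\e(w)$ for every $w\in\Wcal$, which is precisely the $\circ$-condition. The delicate points are the integral choice of coordinates and the verification that only finitely many primes are excluded; these are exactly where the good-reduction hypotheses are used.

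Finally, part~(c) follows formally. For each number field~$K$ and finite set of places~$S$, the inclusions of part~(b) restrict to inclusions of the corresponding images in~$\Moduli_d^1$ under $(f,Y,X)\mapsto[f]$, and passing to Zariski closures preserves inclusions. Since dimension is monotone for nested closed subsets, taking the supremum over all $(K,S)$ yields $\shafdim_d^1[\Pcal]^\star\le\shafdim_d^1[\Pcal]^\circ\le\shafdim_d^1[\Pcal]^\bullet$. The last bound $\shafdim_d^1[\Pcal]^\bullet\le\dim\Moduli_d^1=2d-2$ holds because every such image lies in~$\Moduli_d^1$, whose dimension is $2d-2$.
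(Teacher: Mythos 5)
Your parts (b) and (c) are sound, but part (a) contains a genuine gap: the cases $x=\circ$ and $x=\star$ do \emph{not} follow ``by the same direct comparison,'' and in fact they are false as stated. The $\circ$- and $\star$-conditions are \emph{equalities}, and equalities are not monotone in the weights: if $(f,Y,X)$ is a $\circ$-model for $\Pcal'$ and $\e'(w_0)>\e(w_0)$ at some vertex $w_0$, then $e_f\bigl(i(w_0)\bigr)=\e'(w_0)\ne\e(w_0)$, so $(f,Y,X)$ is \emph{not} a $\circ$-model for $\Pcal$; the same objection applies verbatim to $\star$. The paper's own computations confirm the failure: taking $\Pcal_1$ to be the unweighted $3$-cycle and $\Pcal_2$ the $3$-cycle with one vertex of weight $2$ (these are the portraits $\Pcal_{3,1}$ and $\Pcal_{4,9}$ of the weight-$\le4$ classification), one has $\#\GR_2^1[\Pcal_2]^\circ(K,S)=\infty$ while $\#\GR_2^1[\Pcal_1]^\circ(K,S)<\infty$ whenever $\#R_S^*=\infty$, which is incompatible with the inclusion $\GR_2^1[\Pcal_2]^\circ(K,S)\subseteq\GR_2^1[\Pcal_1]^\circ(K,S)$. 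So only the $\bullet$-case of (a), where inequalities compose, can actually be proved, and that is the only case used elsewhere in the paper. (To be fair, the proposition as printed asserts all three cases and the paper's proof, ``clear from the definitions,'' glosses over exactly this point; a careful proof must either restrict (a) to $x=\bullet$ or flag the failure. There is a further wrinkle you elide when you say ``the remaining defining conditions are identical'': under the formal definitions, all three sets sit inside $\GR_d^1[n](K,S)$ with $n=\wt(\Pcal)$, whose condition $\sum_{P\in Y}e_f(P)=n$ already obstructs comparing portraits of different total weight, so (a) only makes sense with that weight-sum condition dropped, as in the Introduction's version of the definitions.)

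By contrast, your proof of $\GR_d^1[\Pcal]^\star(K,S)\subseteq\GR_d^1[\Pcal]^\circ(K,S)$ is correct and is actually more complete than what the paper records. The paper's entire justification is the remark that ramification indices can only increase under reduction at a prime of good reduction; by itself this yields only $e_f\bigl(i(w)\bigr)\le\e(w)$ from the $\star$-condition, and a second input is needed to force equality---either your observation that $e_{\tilde f_\gp}$ agrees with $e_f$ at all but finitely many $\gp$ (so that the $\star$-condition, which holds at \emph{every} $\gp\notin S$, can be evaluated at a well-chosen prime), or else the total-weight constraint $\sum_{P\in Y}e_f(P)=\wt(\Pcal)$ built into $\GR_d^1[n](K,S)$, which converts the pointwise inequalities into equalities. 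Your route is the cleaner of the two, and your sketch of the local argument (integral coordinates at $P$ and $f(P)$, unit leading coefficient outside a finite set of primes) is the standard justification. The inclusion $\circ\subseteq\bullet$ and the formal deduction of (c) from (b) are fine.
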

\begin{proof}
(a) and (b) are clear from the definitions of the various sets of good reduction,
and~(c) follows~(b) and the definition of Shafarevich dimension.
We note that if a map~$f$ has good reduction at~$\gp$, then its ramification index
can only increase when~$f$ is reduced modulo~$\gp$.
\end{proof}

\begin{example}
Consider the following two preperiodic portraits:
\[
  \raise-20pt\hbox{$\Pcal_{1}$} \quad
  \xymatrix{ {\bullet} \ar[r]_{} & {\bullet} \ar[dl]_{}\\
     {\bullet}  \ar[u]_{}\\
  }
  \qquad\qquad
  \raise-20pt\hbox{$\Pcal_{2}$} \quad
  \xymatrix{ {\bullet} \ar[r]_{} & {\bullet} \ar[dl]_{}\\
     {\bullet}  \ar[u]^{2}_{}\\
  }
\]
We note that the portrait~$\Pcal_2$ is strictly larger than the portrait~$\Pcal_1$
in the sense of Proposition~\ref{proposition:circsubsch}(a),
so the proposition tells us that
$\GR_d^1[\Pcal_2]^\circ(K,S)\subseteq\GR_d^1[\Pcal_1]^\circ(K,S)$. However, we will see in
Section~\ref{section:wt4deg2P1} that if~$\#R_S^*=\infty$, then
\[
  \#\GR_2^1[\Pcal_2]^\circ(K,S)=\infty
  \quad\text{and}\quad
  \#\GR_2^1[\Pcal_1]^\circ(K,S)<\infty.
\]
In words, there are only finitely many degree~$2$ rational maps with
good reduction outside~$S$ that have an \emph{unramified} good
reduction $3$-cycle, but if we allow one of the points in the
$3$-cycle to be ramified, then there are infinitely many such maps.
In terms of Shafarevich dimensions, we have
$\shafdim_d^1[\Pcal_1]^\circ=0$ and $\shafdim_d^1[\Pcal_2]^\circ=1$.
On the other hand, we will show that with the more restrictive
Petsche--Stout good reduction criterion, we have
$\shafdim_d^1[\Pcal_1]^\star=\shafdim_d^1[\Pcal_2]^\star=0$.
Another example of this phenomenon, where more ramification leads to
more maps of good reduction, is given by portraits~$\Pcal_{3,3}$
and~$\Pcal_{4,7}$ in Tables~\ref{table:wt3deg2P1}
and~\ref{table:wt4deg2P1}, respectively.
\end{example}

\section{Good Reduction for Preperiodic Portraits of Weight ${}\le4$ for Degree 2 Maps of $\PP^1$}
\label{section:wt4deg2P1}

We know from Theorem~\ref{theorem:shafconjP1} with $N=1$ and $d=2$
that if a portrait~$\Pcal$ satisfies $\wt(\Pcal)\ge5$, then
$\shafdim_2^1[\Pcal]^\bullet=0$. In other words, dynamical Shafarevich finiteness
holds for degree~$2$ maps $f:\PP^1\to\PP^1$ that model a
portrait~$\Pcal$ of weight at least~$5$.  In this section we give a
complete analysis of preperiodic portraits of weights~$1$ to~$4$.  For example,
it turns out that there are~$22$ such portraits of weight~$4$, and 
dynamical Shafarevich finiteness holds for some of them, but not for
others. For notational convenience, we label portraits
as~$\Pcal_{w,m}$, where~$w$ is the weight and $m\in\{1,2,3\ldots\}$.

\begin{theorem}
\label{theorem:deg2wt4classification}
We consider moduli spaces of degree $2$ maps $\PP^1\to\PP^1$ with
weighted preperiodic portraits.
\begin{parts}
\Part{(a)}
There is~$1$ portrait~$\Pcal$ of weight~$1$ such that
$\Moduli_2^1$ contains a map that can be used to model~$\Pcal$.
\Part{(b)}
There are~$4$ portraits~$\Pcal$ of weight~$2$ such that
$\Moduli_2^1$ contains a map that can be used to model~$\Pcal$.
\Part{(c)}
There are~$8$ portraits~$\Pcal$ of weight~$3$ such that
$\Moduli_2^1$ contains a map that can be used to model~$\Pcal$.
\Part{(d)}
There are~$22$ portraits~$\Pcal$ of weight~$4$ such that
$\Moduli_2^1$ contains a map that can be used to model~$\Pcal$.
\end{parts}
\noindent
These portraits are as catalogued in
Tables~$\ref{table:wt3deg2P1}$,~$\ref{table:wt4deg2P1}$
and~$\ref{table:wt4deg2P1+}$, which also give the values
of the following quantities\textup:
\begin{align*}
  \MD &:= \dim\overline{\Moduli_2^1[\Pcal]^\bullet}, & \SD^\bullet &:= \shafdim_2^1[\Pcal]^\bullet, \\*
  \SD^\circ &:= \shafdim_2^1[\Pcal]^\circ, & \SD^\star &:= \shafdim_2^1[\Pcal]^\star.
\end{align*}
\end{theorem}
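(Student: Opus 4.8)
The plan is to combine a finite combinatorial enumeration with a case-by-case dimension computation, using the moduli-theoretic input already assembled. First I would enumerate all preperiodic weighted portraits $\Pcal=(\Vcal,\Vcal,\Phi,\e)$ with $\wt(\Pcal)\le4$. Since $\Phi:\Vcal\to\Vcal$ is a self-map of a finite set, every connected component of its functional graph is a single cycle with rooted in-trees attached, so the underlying unweighted graphs form an explicit finite list; each is then decorated with weights $\e(w)\in\{1,2\}$, the bound $\e(w)\le d=2$ being forced. To discard those portraits admitting no degree $2$ model I would apply Proposition~\ref{proposition:whenisMPempty}: constraint~(I) limits each fibre $\Phi^{-1}(v)$ to total weight $\le2$, constraint~(II) permits at most two weight-$2$ vertices, and the constraints~(III$_n$) bound the periodic structure (at most $3$ fixed points, at most one $2$-cycle, and so on). This pares the list down to exactly the portraits recorded in parts~(a)--(d), and for each survivor I would exhibit one explicit degree $2$ map realizing it, thereby confirming $\Moduli_2^1[\Pcal]\ne\emptyset$ and pinning down the enumeration.

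The dimension computations rest on the Milnor isomorphism $\Moduli_2^1\xrightarrow{\sim}\AA^2$, which converts every dimension count into a concrete algebraic computation in two variables. For a fixed portrait, I would first compute $\MD=\dim\overline{\Moduli_2^1[\Pcal]^\bullet}$ by normalizing three points of $X$ to $\{0,1,\infty\}$ with $\PGL_2$, writing the universal degree $2$ map whose graph realizes $\Pcal$ with the prescribed ramification, and reading off the dimension of its image in $\AA^2$. Because $\wt(\Pcal)\le4<2d+1=5$, Lemma~\ref{lemma:interpratfnc} does not determine $f$ from its portrait data, so these loci are generically positive dimensional, and $\MD$ measures precisely the freedom left over; it serves as the ambient bound $\SD^\star\le\SD^\circ\le\SD^\bullet\le\MD$ refining Proposition~\ref{proposition:circsubsch}(c).

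The substantive work is the evaluation of the three Shafarevich dimensions, which differ only in how forcefully the vertex ramification is imposed. For each portrait I would argue in two directions. For the lower bounds I would produce an explicit one- or two-parameter family $f_t$ with parameters in $R_S^*$, patterned on Propositions~\ref{proposition:simpledN1} and~\ref{proposition:fawt2d}, such that $f_t$ and the set $X(t)$ have good reduction outside a fixed $S$ and the induced map to $\Moduli_2^1$ is non-constant, the non-constancy being detected by a fixed-point multiplier that varies with $t$. For the matching upper bounds, and in particular to establish finiteness $\shafdim=0$, I would invoke Lemma~\ref{lemma:XP1finite} to reduce to finitely many $X$, normalize so that the points of $X$ are $S$-integral with $S$-unit differences, and then show that the good-reduction constraints on $f$ together with the ramification conditions confine the remaining parameters either to a bounded $S$-integral set or to the solutions of an $S$-unit equation, forcing the image in $\Moduli_2^1$ to be finite.

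The main obstacle I anticipate is the delicacy of the $\star$-condition and of the finiteness boundary itself. The $\star$-condition requires the prescribed ramification to persist modulo \emph{every} prime outside $S$, and tracing through the integrality this typically forces critical points of $f$ to coincide $S$-integrally with orbit points of $X$, collapsing parameters that are free for the $\bullet$- and $\circ$-problems and thereby dropping the dimension. Determining exactly when this collapse occurs---and, conversely, isolating the portraits (such as $\Pcal_2$ in the earlier example, where adding ramification counterintuitively raises the dimension) for which a genuinely positive-dimensional $S$-unit family survives---is what prevents a uniform treatment and forces the portrait-by-portrait analysis carried out in the tables. I expect the bulk of the effort to lie in correctly adjudicating these boundary cases rather than in the lower-bound constructions, which are comparatively routine.
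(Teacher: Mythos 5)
Your overall architecture---enumerate the finite list of preperiodic functional graphs, prune with Proposition~\ref{proposition:whenisMPempty}, normalize three points of $X$ to $\{0,1,\infty\}$, read off dimensions through the Milnor isomorphism, obtain lower bounds from explicit $R_S^*$-parametrized families and upper bounds from arithmetic finiteness---is exactly the paper's. The gap is in the finiteness step. You assert that the good-reduction and ramification constraints confine the leftover parameters ``either to a bounded $S$-integral set or to the solutions of an $S$-unit equation,'' but for a substantial subset of the portraits the two-term $S$-unit equation is not the right tool. For $\Pcal_{3,1}$, $\Pcal_{3,3}$, $\Pcal_{3,6}$, $\Pcal_{4,12}$, $\Pcal_{4,13}$, $\Pcal_{4,14}$, and $\Pcal_{4,19}$ the constraints produce $4$- and $5$-term $S$-unit sums, so one must invoke the multivariable $S$-unit sum theorem of Evertse and van der Poorten--Schlickewei, which gives finiteness only for solutions with \emph{no vanishing subsum}, and then every vanishing-subsum case must be analyzed separately. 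These subsum cases are not a technicality: they are precisely where the positive-dimensional ramified families live (for $\Pcal_{3,1}$ the three subsum families land in $\GR_2^1[\Pcal_{4,9}]^\circ(K,S)$, and for $\Pcal_{3,3}$ in $\GR_2^1[\Pcal_{4,5}]^\circ$ and $\GR_2^1[\Pcal_{4,7}]^\circ$), and they are exactly the mechanism by which $\SD^\bullet$ and $\SD^\circ$ come apart.

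More seriously, whenever the portrait has a vertex that is not among the normalized points and whose $K$-rationality (forced by Galois-invariance of $X$) requires the discriminant of a quadratic to be a square in $K$---the extra preimage in $\Pcal_{4,12}$, the second fixed point in $\Pcal_{4,13}$, the point $\a$ in $\Pcal_{4,14}$, the fourth point of $\Pcal_{4,19}$---the subsum cases reduce to equations of the shape $t^2=\g^2u^4+c$ or $t^2=1-4\g u^3$ with $u\in R_S^*$ and $\g$ ranging over representatives of $R_S^*/(R_S^*)^k$. These are $S$-integral points on genus-$1$ curves, and their finiteness is Siegel's theorem \cite[D.9.1]{hindrysilverman:diophantinegeometry}, a strictly deeper input that cannot be recovered from $S$-unit equations; without it your plan stalls at precisely the portraits with $\MD=2$ but Shafarevich dimension $0$. (A smaller slip: it is not true that the loci are ``generically positive dimensional'' because $\wt(\Pcal)\le 4<2d+1$; the ramification constraints alone force $\MD=0$ for $\Pcal_{4,1}$ and $\Pcal_{4,2}$.) Everything else in your outline---the enumeration, the lower-bound families detected through Milnor coordinates, and the collapse of $\SD^\star$ via $u+v=1$---matches the paper's proof.
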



\begin{table}[t]
\[
\begin{array}{|c|c|c|c|c|c|c|} \hline
  \# & \Pcal & \wt(\Pcal) & \MD & \SD^\bullet & \SD^\circ & \SD^\star \\ \hline\hline
  \Pcal_{1,1} &
  \xymatrix{ {\bullet}  \ar@(dr,ur)[]_{} } \quad
  & 1 & 2 & 2 & 2 & 2  \TableLoopSpacing\\ \hline\hline
  \Pcal_{2,1} &
  \xymatrix{ {\bullet}  \ar@(dr,ur)[]_{2} } \quad
  & 2 & 1 & 1 & 1 & 1  \TableLoopSpacing\\ \hline
  \Pcal_{2,2} &
  \xymatrix{ {\bullet} \ar[r]_{}& {\bullet}  \ar@(dr,ur)[]_{}} 
  & 2 & 2 & 2 & 2 & 2  \TableLoopSpacing\\ \hline
  \Pcal_{2,3} &
  \xymatrix{ {\bullet}  \ar@(dr,ur)[]_{} } \quad
  \xymatrix{ {\bullet}  \ar@(dr,ur)[]_{} }
  & 2 & 2 & 2 & 2 & 1  \TableLoopSpacing \\ \hline
  \Pcal_{2,4} &
  \xymatrix{
    {\bullet} \ar@(dl,dr)[r]_{}   & {\bullet}   \ar@(ur,ul)[l]_{}  \\
  }
  & 2 & 2 & 2 & 2 & 1 \\ \hline\hline
  \Pcal_{3,1} &
  \xymatrix{ {\bullet} \ar[r]_{} & {\bullet} \ar[dl]_{}\\
     {\bullet}  \ar[u]_{}\\
    }
  & 3 & 2 & 1 & 0 & 0 \\ \hline 
  \Pcal_{3,2} &
  \xymatrix{
    {\bullet} \ar[r]_{}& {\bullet} \ar@(dl,dr)[r]_{}   & {\bullet}   \ar@(ur,ul)[l]_{} \\
  }
  & 3 & 2 & 2 & 2 & 1 \\ \hline
  \Pcal_{3,3} &
  \xymatrix{ {\bullet}  \ar@(dr,ur)[]_{}} \quad
  \xymatrix{
     {\bullet} \ar@(dl,dr)[r]_{}   & {\bullet}   \ar@(ur,ul)[l]_{} \\
  }
  & 3 & 2 & 1 & 0 & 0 \\ \hline
  \Pcal_{3,4} &
  \xymatrix{ {\bullet} \ar[r]_{}& {\bullet} \ar[r]_{}& {\bullet}  \ar@(dr,ur)[]_{}} 
  & 3 & 2 & 2 & 2 & 1  \TableLoopSpacing \\\hline
  \Pcal_{3,5} &
  \xymatrix{ {\bullet}  \ar@(dr,ur)[]_{} } \quad
  \xymatrix{ {\bullet} \ar[r]_{}& {\bullet}  \ar@(dr,ur)[]_{}} 
  & 3 & 2 & 2 & 2 & 1  \TableLoopSpacing \\\hline
  \Pcal_{3,6} &
  \xymatrix{ {\bullet}  \ar@(dr,ur)[]_{} } \quad
  \xymatrix{ {\bullet}  \ar@(dr,ur)[]_{} } \quad
  \xymatrix{ {\bullet}  \ar@(dr,ur)[]_{} }
  & 3 & 2 & 1 & 0 & 0  \TableLoopSpacing \\ \hline
  \Pcal_{3,7} &
  \xymatrix{
     {\bullet} \ar@(dl,dr)[r]^2_{}   & {\bullet}   \ar@(ur,ul)[l]_{} \\
  }
  & 3 & 1 & 1 & 1 & 1 \\ \hline
  \Pcal_{3,8} &
  \xymatrix{ {\bullet}  \ar@(dr,ur)[]_{} } \quad
  \xymatrix{ {\bullet}  \ar@(dr,ur)[]_{2} }
  & 3 & 1 & 1 & 1 & 1 \TableLoopSpacing \\ \hline
\end{array}
\]
\caption{Weight $1$, $2$, and $3$ preperiodic portraits for degree $2$ maps}
\label{table:wt3deg2P1}
\vspace*{45pt}  
\end{table}

\begin{table}[t]
\[
\begin{array}{|c|c|c|c|c|c|c|} \hline
  \# & \Pcal & \wt(\Pcal) & \MD & \SD^\bullet & \SD^\circ & \SD^\star \\ \hline\hline
  \Pcal_{4,1} &
  \xymatrix{ {\bullet}  \ar@(dr,ur)[]_{2} } \quad
  \xymatrix{ {\bullet}  \ar@(dr,ur)[]_{2} }
  & 4 & 0 & 0 & 0 & 0  \TableLoopSpacing\\ \hline 
  \Pcal_{4,2} &
  \xymatrix{
    {\bullet} \ar@(dl,dr)[r]^2_{}   & {\bullet}   \ar@(ur,ul)[l]^2_{}  \\
  }
  & 4 & 0 & 0 & 0 & 0 \\ \hline
  \Pcal_{4,3} &
  \xymatrix{ {\bullet}  \ar@(dr,ur)[]_{2} } \quad
  \xymatrix{ {\bullet}  \ar@(dr,ur)[]_{} } \quad
  \xymatrix{ {\bullet}  \ar@(dr,ur)[]_{} }
  & 4 & 1 & 1 & 1 & 0  \TableLoopSpacing\\ \hline
  \Pcal_{4,4} &
  \xymatrix{ {\bullet}  \ar@(dr,ur)[]_{2} } \quad
  \xymatrix{ {\bullet} \ar[r]_{}& {\bullet}  \ar@(dr,ur)[]_{}} 
  & 4 & 1 & 1 & 1 & 1  \TableLoopSpacing\\ \hline
  \Pcal_{4,5} &
  \xymatrix{ {\bullet}  \ar@(dr,ur)[]_{2} } \quad
  \xymatrix{
    {\bullet} \ar@(dl,dr)[r]_{}  & {\bullet}   \ar@(ur,ul)[l]_{} \\
  }
  & 4 & 1 & 1 & 1 & 0 \\ \hline
  \Pcal_{4,6} &
  \xymatrix{ {\bullet} \ar^2[r]_{}& {\bullet} \ar[r]_{}& {\bullet}  \ar@(dr,ur)[]_{}} 
  & 4 & 1 & 1 & 1 & 1 \TableLoopSpacing \\ \hline
  \Pcal_{4,7} &
  \xymatrix{ {\bullet}  \ar@(dr,ur)[]_{}} \quad
  \xymatrix{
     {\bullet} \ar@(dl,dr)[r]^2_{}   & {\bullet}   \ar@(ur,ul)[l]_{} \\
  }
  & 4 & 1 & 1 & 1 & 0 \\ \hline
  \Pcal_{4,8} &
  \xymatrix{
    {\bullet} \ar[r]_{}& {\bullet} \ar@(dl,dr)[r]^2_{}   & {\bullet}   \ar@(ur,ul)[l]_{} \\
  }
  & 4 & 1 & 1 & 1 & 1 \\ \hline
  \Pcal_{4,9} &
  \xymatrix{ {\bullet} \ar[r]_{} & {\bullet} \ar[dl]_{}\\
     {\bullet}  \ar[u]^{2}\\
    }
  & 4 & 1 & 1 & 1 & 0 \\ \hline
  \Pcal_{4,10} &
  \xymatrix{ {\bullet} \ar[r]_{} & {\bullet}  \ar@(dr,ur)[]_{} } \hspace{.5em}
  \xymatrix{ {\bullet}  \ar@(dr,ur)[]_{} } \hspace{.5em}
  \xymatrix{ {\bullet}  \ar@(dr,ur)[]_{} }
  & 4 & 2 & 0 & 0 & 0  \TableLoopSpacing\\ \hline
  \Pcal_{4,11} &
  \xymatrix{ {\bullet} \ar[r]_{} & {\bullet}  \ar@(dr,ur)[]_{} } \hspace{.5em}
  \xymatrix{ {\bullet} \ar[r]_{} & {\bullet}  \ar@(dr,ur)[]_{} } 
  & 4 & 2 & 1 & 1 & 1  \TableLoopSpacing\\ \hline
  \Pcal_{4,12} &
  \xymatrix{ {\bullet} \ar[r]_{} & {\bullet} \ar[r]_{} & {\bullet}  \ar@(dr,ur)[]_{} } \hspace{.5em}
  \xymatrix{  {\bullet}  \ar@(dr,ur)[]_{} } 
  & 4 & 2 & 0 & 0 & 0  \TableLoopSpacing\\ \hline
  \Pcal_{4,13} &
  \xymatrix{ {\bullet}  \ar@(dr,ur)[]_{}} \hspace{.5em}
  \xymatrix{ {\bullet}  \ar@(dr,ur)[]_{}} \hspace{.5em}
  \xymatrix{
     {\bullet} \ar@(dl,dr)[r]_{}   & {\bullet}   \ar@(ur,ul)[l]_{} \\
  }
  & 4 & 2 & 0 & 0 & 0 \\ \hline
\end{array}
\]
\caption{Weight $4$ preperiodic portraits for degree $2$ maps (Part 1)}
\label{table:wt4deg2P1} 
\vspace*{28pt}  
\end{table}

\begin{table}[t]
\[
\begin{array}{|c|c|c|c|c|c|c|} \hline
  \# & \Pcal & \wt(\Pcal) & \MD & \SD^\bullet & \SD^\circ & \SD^\star \\ \hline\hline
  \Pcal_{4,14} &
  \xymatrix{ {\bullet} \ar[r] &  {\bullet} \ar[r] &  {\bullet} \ar[r] & {\bullet}  \ar@(dr,ur)[]_{} }
  & 4 & 2 & 0 & 0 & 0  \TableLoopSpacing \\ \hline
  \Pcal_{4,15} &
  \xymatrix{ {\bullet} \ar[r] & {\bullet}  \ar@(dr,ur)[]_{} } \quad
  \xymatrix{
     {\bullet} \ar@(dl,dr)[r]_{}   & {\bullet}   \ar@(ur,ul)[l]_{} \\
  } 
  & 4 & 2 & 0 & 0 & 0 \\ \hline
  \Pcal_{4,16} &
  \xymatrix{ {\bullet}  \ar@(dr,ur)[]_{} } \quad
  \xymatrix{
     {\bullet} \ar[r] &  {\bullet} \ar@(dl,dr)[r]_{}   & {\bullet}   \ar@(ur,ul)[l]_{} \\
  } 
  & 4 & 2 & 0 & 0 & 0 \\ \hline
  \Pcal_{4,17} &
  \raisebox{-20pt}{$\displaystyle  \xymatrix{  {\bullet}  \ar@(dr,ur)[]_{} } $ }\quad
  \xymatrix{ {\bullet} \ar[r]_{} & {\bullet} \ar[dl]_{}\\
     {\bullet}  \ar[u]_{}\\
    }
  & 4 & 2 & 0 & 0 & 0 \\ \hline
  \Pcal_{4,18} &
  \xymatrix{
    {\bullet} \ar[dr]_{} \\
    & {\bullet} \ar[r]_{} & {\bullet} \ar@(dr,ur)[]_{} \\
    {\bullet} \ar[ur]_{} \\
  }
  & 4 & 2 & 1 & 1 & 1 \\ \hline
  \Pcal_{4,19} &
  \xymatrix{
    {\bullet} \ar[r]_{} & {\bullet} \ar[r]_{} &
     {\bullet} \ar@(dl,dr)[r]_{}   & {\bullet}   \ar@(ur,ul)[l]_{} \\
  } 
  & 4 & 2 & 0 & 0 & 0 \\ \hline
  \Pcal_{4,20} &
  \xymatrix{
    {\bullet} \ar[r]_{} & 
     {\bullet} \ar@(dl,dr)[r]_{}   & {\bullet}   \ar@(ur,ul)[l]_{} & {\bullet} \ar[l]_{} \\
  } 
  & 4 & 2 & 1 & 1 & 1 \\ \hline
  \Pcal_{4,21} &
  \xymatrix{ {\bullet} \ar[r]_{} & {\bullet} \ar[r]_{} & {\bullet} \ar[dl]_{}\\
     & {\bullet}  \ar[u]_{}\\
    }
  & 4 & 2 & 0 & 0 & 0 \\ \hline
  \Pcal_{4,22} &
  \xymatrix{ {\bullet}  \ar[r]_{} & {\bullet} \ar[d]_{} \\
    {\bullet} \ar[u]_{} & {\bullet} \ar[l]_{} \\
  }
  & 4 & 2 & 0 & 0 & 0 \\ \hline
\end{array}
\]
\caption{Weight $4$ preperiodic portraits for degree $2$ maps (Part 2)}
\label{table:wt4deg2P1+} 
\vspace*{40pt}  
\end{table}

\begin{proof}
Since we will be dealing entirely with preperiodic portraits in this proof,
we write the triple~$(f,X,X)$ as a pair~$(f,X)$.  
For degree~$2$ maps, we see that~$\Moduli_2^1[\Pcal]^\bullet=\emptyset$ unless
the following four conditions are true;
cf.\ Proposition~\ref{proposition:whenisMPempty}.
\par\noindent\begin{tabular}{rl}
(I)& Each point has at most weight~$2$ worth of incoming arrows; \\
(II)& There are at most~$2$ critical points;\\
(III$_1$)& There are at most~$3$ fixed points;\\
(III$_2$)& There is at most one periodic cycle of length~$2$.\\
\end{tabular}

Sublemma~\ref{lemma:fPGL2fX0} says that in order to prove that
$\GR_2^1[\Pcal]^\circ(K,S)/\PGL_2(R_S)$ is finite for all~$K$ and~$S$,
it suffices to prove finiteness after extending~$K$ and
enlarging~$S$. And the definition of~$\shafdim_d^1[\Pcal]^\bullet$ and its
variants is a supremum over all~$K$ and all~$S$.  So we may assume
throughout our discussion that in every model~$(f,X)$ for~$\Pcal$, the
points in~$X$ are in~$\PP^1(K)$, and further that~$S$ is chosen so
that
\[
  \text{$R_S$ is a PID;\qquad $R_S^*$ is infinite;\qquad $2,3\in R_S^*$.}
\]

Using the assumptions that the points in our portraits are in
$\PP^1(K)$ and that~$R_S$ is a PID, Lemma~\ref{sublemma:3ptsmodp} and
the Chinese remainder theorem tell us that we can find an element
of~$\PGL_2(R_S)$ to move three of the points in~$X$ to the
points~$0$,~$1$, and~$\infty$. (Or just to~$0$ and~$\infty$ if
$\#X=2$.)

As in the proof of Proposition~\ref{proposition:simpledN1}, we will
frequently use the Milnor isomorphism~\cite[Theorem~4.5.6]{MR2316407}
\[
  s = (s_1,s_2):\Moduli^1_2\xrightarrow{\;\sim\;}\AA^2,
\]
which we implemented in PARI~\cite{PARI2}, to help distinguish
the~$\PGL_2(\Kbar)$-conjugacy classes of our maps, and we often use
Magma~\cite{MR1484478} to verify that the images of certain maps are Zariski
dense in~$\Moduli_2^1$.

\Pcase{1}{1}{2}
This case was done by Petsche and Stout~\cite[Remark~3]{MR3293730},
but for completeness, we include a proof.
Let $f(x)=(x^2+ax)/(bx+1)$ with $\Resultant(f)=1-ab$,
so $\bigl(f,\{0\}\bigr)\in\GR_2^1[\Pcal_{1,1}]^\bullet(K,S)$ for all $a,b\in R_S$
satisfying $1-ab\in R_S^*$. Further, $f'(0)=a$, so if we
take~$a\in R_S^*$, then~$0$ is not critical modulo~$v$ for all
$v\notin S$.  This suggests that we change variables via
$b=(1-u)a^{-1}$.  Then $f(x)=(ax^2+a^2x)/((1-u)x+a)$ with
$\Resultant(f)=a^4u$ and $f'(0)=a$, so
$\bigl(f,\{0\}\bigr)\in\GR_2^1[\Pcal_{1,1}]^\star(K,S)$ for all
$a,u\in R_S^*$.  The Milnor image of this map in~$\Moduli_2\cong\AA^2$
is
\begin{multline*}
  s\left(\frac{ax^2+a^2x}{(1-u)x+a}\right)
  =\left(\frac{a^2 (u - 1) + 2 a - (u-1)^2}{a u}, \right.\\
 \left. \frac{-a^4 + 2 a^3 - a^2 (u-1)(u-2) - 2 a (u - 1) - (u-1)^2 }{a^2 u}\right).
\end{multline*}
We used Magma to verify that the two rational functions~$s_1(a,u)$
and~$s_2(a,u)$ are algebraically independent in $K(a,u)$. Hence under our
assumption that~$\#R_S^*=\infty$, we see that
$\{s(a,u):a,u\in R_S^*\}$ is Zariski dense in~$\AA^2$.
This completes the proof that $\shafdim_2^1[\Pcal_{1,1}]^\star=2$, and
the other Shafarevich dimensions are also~$2$ by the standard
inequalities in Proposition~\ref{proposition:circsubsch}(e).

\Pcase{2}{1}{1}
Moving the totally ramified fixed point to~$\infty$, the map~$f$ has
the form $f(x)=ax^2+bx+c$. It has good reduction if and only if
$a\in R_S^*$. Then we can conjugate by a map of the form
$x\mapsto a^{-1}x+e$ to put~$f(x)$ in the form $f(x)=x^2+c$.  Since the
ramification at~$\infty$ can't increase when we reduce modulo primes
not in~$S$, we see that
\[
  \bigl(x^2=c,\{\infty\}\bigr) \in \GR_2^1[\Pcal_{2,1}]^\star(K,S)
  \quad\text{for all~$c\in R_S^*$.}
\]
The closure of the image in~$\Moduli_2^1$ is the line~$s_1=2$ of
polynomials.

\Pcase{2}{2}{2}
We move the two points to~$0,\infty$, and then~$f$ has the form
$f(x)=(ax^2+bx+c)/dx$. This map has $\Resultant(f)=acd^2$, so we can
dehomogenize~$d=1$. Thus $f(x)=ax+b+cx^{-1}$ with $ac\in R_S$.
Conjugating by $x\to ux$ gives $u^{-1}f(ux)=ax+bu^{-1}+cu^{-2}x^{-1}$,
so going to~$K(\sqrt{c})$, which is unramified over~$S$, we may
assume that~$c=1$ and~$f(x)=(ax^2+bx+1)/x$.
We also observe that $f^{-1}\bigl(f(\infty)\bigr)=\{0,\infty\}$ and in
$f^{-1}\bigl(f(0)\bigr)=\{0,\infty\}$, so~$0$ and~$\infty$ are
unramified modulo all primes. (Alternatively, one could compute
derivatives, after moving~$\infty$ to a more amenable point.)
Hence
\[
  \bigl(f,\{0,\infty\}\bigr)\in\GR_2^1[\Pcal_{2,2}]^\star(K,S)\quad\text{for
  all $a\in R_S^*$ and $b\in R_S$.}
\] 
The Milnor image is
\begin{multline*}
  s\left(\frac{ax^2+bx+1}{x}\right)
  =\left(\frac{4 a^2 - a b^2 - 2 a + 1}{a}, \right.\\*
 \left. \frac{4 a^3 - a^2 b^2 - 4 a^2 + 5 a - b^2 - 2}{a}\right).
\end{multline*}
We used Magma to verify that the rational functions~$s_1(a,u)$
and~$s_2(a,u)$ are algebraically independent in $K(a,u)$. Hence under
our assumption that~$\#R_S^*=\infty$, we see that $\{s(a,u):a,u\in
R_S^*\}$ is Zariski dense in~$\AA^2$.  This completes the proof that
$\shafdim_2^1[\Pcal_{2,2}]^\star=2$, and the other Shafarevich
dimensions are also~$2$ by the standard inequalities in
Proposition~\ref{proposition:circsubsch}(e).

\Pcase{2}{3}{2}
Moving the two fixed points to~$0$ and~$\infty$, the map~$f$ has the
form $f(x)=(ax^2+bx)/(cx+d)$.  The resultant is
$\Resultant(f)=ad(ad-bc)$. Good reduction implies in particular
that~$a,d\in R_S^*$, so we can dehomogenize~$d=1$ and replace~$f$ with
$af(a^{-1}x)=(x^2+bx)/(a^{-1}cx+1)$. We can also replace~$a^{-1}c$ with~$c$,
so~$f(x)=(x^2+bx)/(cx+1)$ with $\Resultant(f)=1-bc$. 
Hence
\[
  \bigl(f,\{0,\infty\}\bigr)\in\GR_2^1[\Pcal_{2,3}]^\circ(K,S)\quad\text{for
  all $b,c\in R_S$ satisfying $1-bc\in R_S^*$.}
\]
We note that this set of~$(b,c)$ is Zariski dense in~$\AA^2$, under
our assumption that~$\#R_S^*=\infty$. For example, if~$u\in R_S^*$ has
infinite order, then for every $n\ge1$ we can take $b=1-u$ and
$c=1+u+u^2+\cdots+u^n$, and this set of points is Zariski dense.
The Milnor image is
\[
  s\left(\frac{x^2+bx}{cx+1}\right)
  =\left(\frac{-b^2 c - b c^2 + 2}{1-bc}, 
   \frac{-b^2 c^2 - b^2 - b c + 2 b - c^2 + 2 c}{1-bc} \right).
\]
We used Magma to verify that the rational functions~$s_1(a,u)$
and~$s_2(a,u)$ are algebraically independent in $K(a,u)$. Hence under
our assumption that~$\#R_S^*=\infty$, we see that $\{s(a,u):a,u\in
R_S^*\}$ is Zariski dense in~$\AA^2$.  This completes the proof that
$\shafdim_2^1[\Pcal_{2,3}]^\circ=2$.

However, the set $\GR_2^1[\Pcal_{2,3}]^\star(K,S)$ is more restrictive, since
we need the fixed points to be unramified for all primes not in~$S$.
Thus $\bigl(f,\{0,\infty\}\bigr)$ is in this set if and only if
$f'(0)=b\in R_S^*$
and
$f'(\infty)=c\in R_S^*$. We thus need~$b,c,1-bc$ to be $S$-units.
Then $(bc,1-bc)$ is a solution to the $S$-unit equation $u+v=1$, so
there are only finitely many possible values for~$bc$. On the other hand,
any fixed solution~$(u,v)$ gives a map $f(x)=(x^2+bx)/(b^{-1}ux+1)$
satisfying 
\[
  \bigl(f,\{0,\infty\}\bigr)\in\GR_2^1[\Pcal_{2,3}]^\star(K,S)\quad\text{for
  all $b\in R_S^*$.}
\]
Each~$(u,v)$ value gives points lying on a curve in~$\Moduli_2^1$. And there
is at least one such curve, since our assumption that~$2\in S$ says that
we can take~$(u,v)=(-1,2)$, leading to the Milnor image
\[
  s\left(\frac{x^2+bx}{-b^{-1}x+1}\right)
  =\left(\frac{b^2 + 2 b - 1}{2b}, 
   \frac{-b^4 + 2 b^3 - 2 b - 1}{2b^2} \right).
\]
Hence $\shafdim[\Pcal_{2,3}]^\star=1$, a result that was first proven
by Petsche and Stout~\cite[Section~4]{MR3293730}.

\Pcase{2}{4}{2}
We move the two points to~$0$ and~$\infty$, so $f(x)=(ax+b)/(cx^2+dx)$
with $\Resultant(f)=bc(ad-bc)$.  Good reduction implies in particular
that~$b,c\in R_S^*$, so we can dehomogenize~$b=1$. Conjugating~$f$
gives $u^{-1}f(ux)=(aux+1)/(cu^3x^2+du^2x)$. Going to the field
$K(\sqrt[3]{c})$, which is unramified outside~$S$, we can take
$u=c^{-1/3}$ and adjust~$a$ and~$d$ accordingly to put~$f$ in the form
$f(x)=(ax+1)/(x^2+dx)$.
Then
\[
  \bigl(f,\{0,\infty\}\bigr)\in\GR_2^1[\Pcal_{2,4}]^\circ(K,S)\quad\text{for
  all $a,d\in R_S$ with $1-ad\in R_S^*$.}
\]
The map~$f$ is unramified at~$0$ if and only if $d\ne0$
and~$f$ is unramified at~$\infty$ if and only if $a\ne0$.  
The Milnor image is
\[
  s\left(\frac{ax+1}{x^2+dx}\right)
  =\left(\frac{a^3 + 4 a d + d^3 - 6}{1-ad},
  \frac{-2 a^3 - a^2 d^2 - 7 a d - 2 d^3 + 12}{1-ad},
  \right).
\]
We used Magma to verify that the rational functions~$s_1(a,u)$
and~$s_2(a,u)$ are algebraically independent in $K(a,u)$. Hence under
our assumption that~$\#R_S^*=\infty$, we see that $\{s(a,u):a,u\in
R_S^*\}$ is Zariski dense in~$\AA^2$.  This completes the proof that
$\shafdim_2^1[\Pcal_{2,4}]^\circ=2$.

The multiplier of the 2-cycle is $(f^2)'(0)=ad$,
so the points~$0$ and~$\infty$ are unramified modulo all primes
not in~$S$ if and only if~$a,d\in R_S^*$. So in this case $(ad,1-ad)$
is a solution to the $S$-unit equation~$u+v=1$, and each of the
finitely many such solutions yields a family of maps $f(x)=(ax+1)/(x^2+ua^{-1}x)$
with
\[
  \bigl(f,\{0,\infty\}\bigr)\in\GR_2^1[\Pcal_{2,4}]^\star(K,S)\quad\text{for
  all $a\in R_S^*$.}
\]
The Zariski closure of these points form a non-empty finite collection of curves,
since for example $(u,v)=(-1,2)$ gives
\[
  s\left(\frac{ax+1}{x^2-a^{-1}x}\right)
  =\left(\frac{a^6 - 10*a^3 - 1}{2a^3}
  \frac{-a^6 + 9*a^3 + 1}{a^3}
  \right).
\]
Hence $\shafdim[\Pcal_{2,4}]^\star=1$, a result that was first proven
by Petsche and Stout~\cite[Section~5]{MR3293730}.

\Pcase{3}{1}{0}
We first note that almost all rational maps of degree~$2$ have
a~$3$-cycle~\cite[\S6.8]{beardon:gtm}. Hence the image
of~$\Moduli_2^1[\Pcal_{3,1}]^\bullet$ omits only finitely
many points, and thus $\dim\overline{\Moduli_2^1[\Pcal_{3,1}]^\bullet}=2$.
We next move the 3-cycle to $0,1,\infty$, so~$f$ has the form
$f(x)=(ax^2-(a+c)x+c)/(ax^2+ex)$
with $\Resultant(f)=ac(a+e)(c+e)$. We dehomogenize $a=1$.
Then
\[
  \bigl( f, \{0,1,\infty\} \bigr) \in \GR_2^1[\Pcal_{3,1}]^\bullet(K,S)
  \quad\Longleftrightarrow\quad
  c(1+e)(c+e)\in R_S^*.
\]
This leads to solutions to the 4-term $S$-unit equation
\[
  c + (1+e) - (c+e) - 1 = 0.
\]
The multivariable $S$-unit sum theorem~\cite{MR766298,MR1119694} says that
there are finitely many solutions with no subsum equal to~$0$. Ignoring those
finitely many solutions, there are three subsum~$0$ cases:
\begin{parts}
\Part{(1)} $c+(1+e)=0$, which implies that $e_f(\infty)=2$.
\Part{(2)} $c-(c+e)=0$, which implies that $e_f(0)=2$.
\Part{(3)} $c-1=0$, which implies that $e_f(1)=2$.
\end{parts}
This gives three families of pairs~$(f,X)$
in~$\GR_2^1[\Pcal_{3,1}]^\bullet(K,S)$, but every~$f$ is ramified at one of
the three points in~$X$, so these pairs are not
in~$\GR_2^1[\Pcal_{3,1}]^\circ(K,S)$. Instead, they are
in~$\GR_2^1[\Pcal_{4,9}]^\circ(K,S)$. These three families are in
fact~$\PGL_2(R_S)$-conjugate via permuation of the points
in~$\{0,1,\infty\}$. Taking, say, the~$e=0$ family, we have good
reduction for all $c\in R_S^*$, and the Milnor image is
\[
  s\left(\frac{x^2-(1+c)x+c}{x^2}\right)
  = \left( \frac{-c^3 - 5 c^2 + c - 1}{c^2},
  \frac{2 c^3 + 7 c^2 - 2 c + 1}{c^2}\right).
\]
This proves that $\shafdim_2^1[\Pcal_{3,1}]^\bullet=1$ and
$\shafdim_2^1[\Pcal_{3,1}]^\circ=0$.

\Pcase{3}{2}{2}
We move the three points to~$1,0,\infty$, and then~$f$ has the form
$f(x)=a(x-1)/(bx^2+cx)$. This map has $\Resultant(f)=-a^2b(b+c)$, so we can
dehomogenize $a=1$ and replace~$c$ with~$c-b$. This gives the
map $f(x)=(x-1)/(bx^2+(c-b)x)$ with $\Resultant(f)=bc$. Hence
\[
  \bigl(f,\{1,0,\infty\}\bigr) \in \GR_2^1[\Pcal_{3,2}]^\circ(K,S)
  \quad\Longleftrightarrow\quad  b,c\in R_S^*,
\]
and it is in $\GR_2^1[\Pcal_{3,2}]^\circ(K,S)$ if further~$f$ is not
ramified at the points $\{0,1,\infty\}$. The map~$f$ is never ramified at~$1$,
while its multiplier at the 2-cycle is $(f^2)'(0)=(b-c)/c$.
The Milnor image is
\begin{multline*}
  s\left(\frac{x-1}{bx^2+(c-b)x}\right)
  = \left( \frac{b^3 - 3 b^2 c - 2 b^2 + 3 b c^2 - 4 b c + b - c^3}{bc}, \right.\\*
  \left.\frac{-2 b^3 + 6 b^2 c + 4 b^2 - 6 b c^2 + 9 b c - 2 b + 2 c^3 - c^2}{bc}\right).
\end{multline*}
We used Magma to verify that the rational functions~$s_1(b,c)$
and~$s_2(b,c)$ are algebraically independent in $K(b,c)$.
Hence under our assumption that~$\#R_S^*=\infty$, we find
that $\shafdim_2^1[\Pcal_{3,2}]^\circ=2$.

However, if we also require the reduction of~$f$ to be unramified
at~$\{0,1,\infty\}$ for all primes not in~$S$, then we must also
require that $b-c\in R_S^*$. Then $(c/b,1-c/b)$ is a solution to
the $S$-unit equation $u+v=1$, so there are only finitely many
choices for the ratio~$c/b$. For each such choice, say~$c=ub$
with~$u$ fixed, the image in~$\Moduli_2^1$ lies on a curve. And taking,
say, $u=-1$ gives the set of points
\[
  s\left(\frac{x-1}{bx^2-2bx}\right)
  = \left( \frac{-8b^2-2b-1}{b}
  \frac{16b^2+6b+2}{b}\right),\quad b\in R_S^*.
\]
The Zariski closure of this set in $\Moduli_2^1$ is a curve, more
precisely, it is the line $2s_1+s_2=2$. Hence
$\shafdim_2^1[\Pcal_{3,2}]^\star=1$.

\Pcase{3}{3}{0}
We move the fixed point to~$\infty$ and the $2$-cycle to $\{0,1\}$,
which puts~$f$ into the form $f(x)=(x-1)(ax+b)/(cx-b)$. The resultant
is $\Resultant(f)=ab(a+c)(b-c)$, so we may dehomogenize $b=1$.
This puts~$f$ in the form $f(x)=(x-1)(ax+1)/(cx-1)$ with resultant
$\Resultant(f)=a(a+c)(1-c)$. Thus~$f$ has good reduction if and only
if $a,a+c,1-c\in R_S^*$, which gives a solution to the 4-term $S$-unit
equation
\[
  a - (a+c) - (1-c) + 1 = 0.
\]
The multivariable $S$-unit sum theorem~\cite{MR766298,MR1119694} says that
there are finitely many solutions with no subsum equal to~$0$. Ignoring those
finitely many solutions, there are three subsum~$0$ cases:
\begin{parts}
\Part{(1)} $a-(a+c)=0$, which implies that $e_f(\infty)=2$.
\Part{(2)} $a-(1-c)=0$, which implies that $e_f(0)=2$.
\Part{(3)} $a+1=0$, which implies that $e_f(1)=2$.
\end{parts}
This gives three families of pairs~$(f,X)$
in~$\GR_2^1[\Pcal_{3,3}]^\bullet(K,S)$, but every~$f$ is ramified at one of
the three points in~$X$, so these pairs are not
in~$\GR_2^1[\Pcal_{3,3}]^\circ(K,S)$.
Instead, they are ~$\GR_2^1[\Pcal_{4,5}]^\circ(K,S)$ in case~(1) and
in~$\GR_2^1[\Pcal_{4,7}]^\circ(K,S)$ in cases~(2) and~(3).
These give sets of points whose closures are curves:
\begin{align*}
  \Pcal_{4,5}: && 
  s\bigl(-ax^2+(a-1)x+1\bigr)
  &= \left( 2,-a^2-3\right),\quad a\in R_S^*, \\
  \Pcal_{4,7}: && 
  s\left(\frac{-x^2+2x-1}{cx-1}\right)
  &= \left( \frac{-c^3+2}{(c-1)^2}
  \frac{2c^3-4}{(c-1)^2}\right),\quad c\in R_S^*.
\end{align*}
More precisely, they give the curves $s_1=2$ and $2s_1+s_2=0$.  This
completes the proof that $\shafdim_2^1[\Pcal_{3,3}]^\bullet=1$ and
$\shafdim_2^1[\Pcal_{3,3}]^\circ=0$.

\Pcase{3}{4}{1}
We move the three points to~$1,0,\infty$, and then~$f$ has the form
$f(x)=(x-1)(ax+b)/cx$. This map has $\Resultant(f)=-abc^2$, so we can
dehomogenize $c=1$. Then $f(x)=(x-1)(ax+b)/x$ has good reduction if
and only if~$a,b\in R_S^*$.  The multiplier at the fixed point is
$f'(\infty)=a^{-1}$, so~$f$ is not ramified at~$\infty$, and similarly
since $f^{-1}(f(0))=\{0,\infty\}$, the map~$f$ is not ramified
at~$0$. And these statements are true even modulo primes not in~$S$.
Finally we observe that~$f'(1)=a+b$, so~$f$ is ramified at~$1$ if and
only if $a+b=0$.
The Milnor image is
\begin{multline*}
  s\left( \frac{(x-1)(ax+b)}{x} \right)  
  = \left( \frac{a^3 + 2a^2b + ab^2 - 2ab + b}{ab}, \right. \\*
  \left. \frac{a^4 + 2a^3b + a^2b^2 - 4a^2b + a^2 + 3ab + b^2 - 2b}{ab} \right),
\end{multline*}
We used Magma to verify that the rational functions~$s_1(b,c)$
and~$s_2(b,c)$ are algebraically independent in $K(b,c)$.
Hence under our assumption that~$\#R_S^*=\infty$, we find
that $\shafdim_2^1[\Pcal_{3,4}]^\circ=2$.

However, if we also require the reduction of~$f$ to be unramified
at~$\{0,1,\infty\}$ for all primes not in~$S$, then we must also
require that $a+b\in R_S^*$. Then $(-b/a,1+b/a)$ is a solution to
the $S$-unit equation $u+v=1$, so there are only finitely many
choices for the ratio~$b/a$. For each such choice, say~$b=ua$
with~$u$ fixed, the image in~$\Moduli_2^1$ lies on a curve. And taking,
say, $u=1$ gives the set of points
\[
  s\left(\frac{a(x^2-1)}{x}\right)
  = \left( \frac{4a^2 - 2a + 1}{a},
  \frac{4a^3 - 4a^2 + 5a - 2}{a}\right),\quad a\in R_S^*.
\]
The Zariski closure in $\Moduli_2^1$ is a curve. Hence
$\shafdim_2^1[\Pcal_{3,4}]^\star=1$.

\Pcase{3}{5}{2}
We move the three points to~$0,1,\infty$, which puts~$f$ in the form
$f(x)=(ax^2+(b-a)x)/c(x-1)$ with $\Resultant(f)=abc^2$.  We
dehomogenize $c=1$, so $f(x)=(ax^2+(b-a)x)/(x-1)$.  We have
$f'(\infty)=a^{-1}$ and $f^{-1}(f(1))=\{1,\infty\}$, so~$a\in R_S^*$
implies that~$f$ is unramified at~$\infty$ and at~$1$, even modulo
primes not in~$S$. Further, $f'(0)=a-b$, so~$f$ is unramified at~$0$
if and only if~$a\ne b$.
The Milnor image is
\begin{multline*}
  s\left( \frac{ax^2+(b-a)x)}{x-1} \right)  
  = \left( \frac{-a^3 + 2 a^2 b + 2 a^2 - a b^2 - a + b}{ab}, \right. \\ 
  \left. \frac{-a^4 + 2 a^3 b + 2 a^3 - a^2 b^2 - 2 a^2 b - 2 a^2 + 3 a b + 2 a - b^2 - 1}{ab} \right).
\end{multline*}
We used Magma to verify that the rational functions~$s_1(a,b)$
and~$s_2(a,b)$ are algebraically independent in $K(a,b)$.
Hence under our assumption that~$\#R_S^*=\infty$, we find
that $\shafdim_2^1[\Pcal_{3,5}]^\circ=2$.

However, if we also require the reduction of~$f$ to be unramified
at~$\{0,1,\infty\}$ for all primes not in~$S$, then we must also
require that $a-b\in R_S^*$. Then $(b/a,1-b/a)$ is a solution to
the $S$-unit equation $u+v=1$, so there are only finitely many
choices for the ratio~$b/a$. For each such choice, say~$b=ua$
with~$u$ fixed, the image in~$\Moduli_2^1$ lies on a curve. And taking,
say, $u= -1$ gives the set of points
\[
  s\left(\frac{a(x^2-2x)}{x-1}\right)
  = \left( \frac{4a^2-2a+2}{a},
  \frac{4 a^4 - 4 a^3 + 6 a^2 - 2 a + 1}{a^2}\right),\; a\in R_S^*.
\]
The Zariski closure in $\Moduli_2^1$ is a curve, so
$\shafdim_2^1[\Pcal_{3,5}]^\star=1$.

\Pcase{3}{6}{0}
We move the three fixed points to~$0,1,\infty$, so~$f$ has the form
$f(x)=(ax^2+bx)/((a-c)x+b+c)$ with $\Resultant(f) = ac(a+b)(b+c)$. We
dehomogenize~$a=1$, so $f(x)=(x^2+bx)/((1-c)x+b+c)$, and we compute
the three multipliers: $f'(0)=b/(b+c)$, $f'(1)=(b+c+1)/(b+c)$,
$f'(\infty)=1-c$.
We have
\[
  \bigl(f,\{0,1,\infty\}\bigr)\in\GR_2^1[\Pcal_{3,6}]^\bullet(K,S)
  \quad\Longleftrightarrow\quad c,1+b,b+c\in R_S^*.
\]
These maps give a solution to the 4-term $S$-unit equation
\[
  (b+c)-c-(1+b)+1=0.
\]
The multivariable $S$-unit sum theorem~\cite{MR766298,MR1119694} says that
there are finitely many solutions with no subsum equal to~$0$. Ignoring those
finitely many solutions, there are three subsum~$0$ cases:
\[
\begin{array}{ccccc}
  (b+c)-c = 0 &\Longrightarrow& f(x)=\dfrac{x^2}{(1-c)x+c} &\Longrightarrow& e_f(0)=2, \\
  (b+c)-(1+b)=0 &\Longrightarrow& f(x)=\dfrac{x^2+bx}{b+1} &\Longrightarrow& e_f(\infty)=2,\\
  (b+c)+1=0 &\Longrightarrow& f(x)=\dfrac{x^2+bx}{(b+2)x-1} &\Longrightarrow& e_f(1)=2.\\
\end{array}
\]
This proves that~$\shafdim_2^1[\Pcal_{3,6}]^\circ=0$, since the subsum~$0$ cases
have a ramified point, and hence are actually in~$\GR_2^1[\Pcal_{4,3}]^\circ(K,S)$.
The closure of these maps in~$\Moduli_2^1$ is a finite set of curves, since for
example the family with $c=1$ gives the family of polynomials
$f(x)=(x^2+bx)/(b+1)$ whose closure in~$\Moduli_2$ for $b+1\in R_S^*$ is the line $s_1=2$.
This proves that~$\shafdim_2^1[\Pcal_{3,6}]^\bullet=1$, and also (for future reference)
that~$\shafdim_2^1[\Pcal_{4,3}]^\circ=1$.

\Pcase{3}{7}{1}
Moving the two points to~$0$ and~$\infty$ with~$0$ critical, the
map~$f$ has the form $f(x)=(ax+b)/cx^2$ with $\Resultant(f)=b^2c^2$.
Dehomogenizing~$c=1$ gives the map~$f(x)=(ax+b)/x^2$, which has good
reduction if and only if~$b\in R_S^*$.
We conjugate $u^{-1}f(ux)$ with $u=\sqrt[3]{b}$, which is okay since $K(\sqrt[3]{b})$
is unramified outside~$S$. This puts~$f$ into the 
form $f(x)=(ax+1)/x^2$ with $\Resultant(f)=1$.
We also note that~$f$ is ramified at~$\infty$ if and only if~$a=0$,
so taking~$a\in R_S^*$ gives maps such that~$\infty$ is unramified modulo
all primes not in~$S$.
This map has Milnor coordinates
\[
  s\left( \frac{ax+1}{x^2}\right) = (a^3-6,-2a^3+12),
\]
so taking the Zariski closure for $a\in R_S^*$ gives the line
$2s_1+s_2=0$. Hence
$\shafdim_2^1[\Pcal_{3,7}]^\bullet=\shafdim_2^1[\Pcal_{3,7}]^\star=1$.

\Pcase{3}{8}{1} Moving the totally ramified fixed point to~$\infty$
and the other fixed point to~$0$, we have $f(x)=ax^2+bx$ with
$\Resultant(f)=a^2$.  Conjugating by~$x\mapsto a^{-1}x$ puts~$f$ into
the form $f(x)=x^2+bx$, and then $\bigl(f,\{0,\infty\}\bigr)$ is in
$\GR_2^1[\Pcal_{3,8}]^\circ(K,S)$ for all $b\in R_S$ with $b\ne0$, and
$\GR_2^1[\Pcal_{3,8}]^\star(K,S)$ for all $b\in R_S^*$.  The Zariski
closure of the Milnor image of these maps in~$\Moduli_2^1\cong\AA^2$
is the line $s_1=2$.  Hence
$\shafdim_2^1[\Pcal_{3,8}]^\bullet=\shafdim_2^1[\Pcal_{3,8}]^\star=1$.

This completes our analysis of the~$13$ portraits of weights~$1$,~$2$,
and~$3$ in Table~\ref{table:wt3deg2P1}. We move on to analyzing
the~$22$ portraits of weight~$4$ in Tables~\ref{table:wt4deg2P1}
and~\ref{table:wt4deg2P1+}.

\Pcase{4}{1}{0} 
Moving the two totally ramified fixed points to~$0$ and~$\infty$, the
map has the form $f(x)=ax^2$. Good reduction forces~$a\in R_S^*$, and
then conjugation~$af(a^{-1}x)$ yields~$f(x)=x^2$. Hence
$\GR_2^1[\Pcal_{4,1}]^\bullet(K,S)/\PGL_2(R_S)$ consists of a single element.

\Pcase{4}{2}{0} 
Moving the two totally period 2 points to~$0$ and~$\infty$, the
map has the form $f(x)=ax^{-2}$. Good reduction forces~$a\in R_S^*$, and
then conjugation~$a^{-1}f(ax)$ yields~$f(x)=x^{-2}$. Hence
$\GR_2^1[\Pcal_{4,2}]^\bullet(K,S)/\PGL_2(R_S)$ consists of a single element.

\Pcase{4}{3}{1} 
Moving the fixed ponts to~$0,1,\infty$ with~$\infty$ ramified, the
map~$f$ has the form $f(x)=ax^2+(1-a)x$ with $\Resultant(f)=a^2$.
Conjugating gives $af(a^{-1}x)=x^2+(1-a)x$.  The multipliers at~$0$
and~$1$ are $f'(0)=1-a$ and $f'(1)=3-a$.  The Milnor image is
$s\bigl(x^2+(1-a)x\bigr)=(2,1-a^2)$, so~$a\in R_S^*$ gives a Zariski
dense set of points in the line~$s_1=2$, and the same is true if we
disallow $a=1$ and $a=3$. This proves that
$\shafdim_2^1[\Pcal_{4,3}]^\circ=1$; cf.\ the analysis
of~$\Pcal_{3,6}$.  However, if we also insist that~$0$ and~$1$ are
unramified modulo all primes outside~$S$, then we need $1-a\in R_S^*$
and $3-a\in R_S^*$.  In particular, $(a,1-a)$ is a solution to the
$S$-unit equation $u+v=1$, so there are only finitely many values
of~$a$.  This proves that $\shafdim_2^1[\Pcal_{4,3}]^\star=0$.

\Pcase{4}{4}{1} 
Moving the ramified fixed point to~$\infty$, the unramified fixed point to~$0$,
and the other point to~$1$, we find that~$f$ has the
form $f(x)=ax^2-ax$ with $\Resultant(f)=a^2$. Since $f'(0)=-a$
and $f'(1)=a$, we see that~$f$ is unramified at~$0$ and~$1$ modulo all
primes not in~$S$, and hence
$\bigl(f,\{0,1,\infty\}\bigr)\in\GR_2^1[\Pcal_{4,4}]^\star(K,S)$ for
all $a\in R_S^*$. The Milnor image is $s(ax^2-ax) = (2,-a^2-2a)$, so
$\shafdim_2^1[\Pcal_{4,4}]^\star=1$.

\Pcase{4}{5}{1} 
We move the ramified fixed point to~$\infty$ and the other two points
to~$0$ and~$1$.  Then~$f$ has the form $f(x)=ax^2-(a+1)x+1$
with $\Resultant(f)=a^2$ and Milnor image 
$s(ax^2-(a+1)x+1) = (2,-a^2-3)$. The multiplier for the 2-cycle
is $(f^2)'(0)=1-a^2$. Hence
\[
  \bigl(f,\{0,1,\infty\}\bigr) \in \GR_2^1[\Pcal_{4,5}]^\circ(K,S)
  \quad\Longleftrightarrow\quad
  a\in R_S^*~\text{and}~a\ne\pm1.
\]
In particular, we see that $\shafdim_2^1[\Pcal_{4,5}]^\circ=1$;
cf.\ the analysis of~$\Pcal_{3,3}$.  However, if we also require that
the $2$-cycle be unramified modulo all primes not in~$S$, then we need
$1-a^2\in R_S^*$. This gives solutions $(a,1-a)$ to the $S$-unit
equation $u+v=1$, so there are only finitely many maps, and hence
$\shafdim_2^1[\Pcal_{4,5}]^\star=0$.

\Pcase{4}{6}{1} 
We move the points to~$0,1,\infty$ so that $1\to0\to\infty\to\infty$.
Before imposing the condition that~$f$ is ramified at~$1$, this
put~$f$ in the form $f(x)=(ax^2+bx+c)/ex$ with $a+b+c=0$
and $\Resultant(f)=ace^2$. We dehomogenize $e=1$,
and then setting~$f'(1)=0$, we find that~$f$ has
the form $f(x)=a(x-1)^2/x$.
Then $f'(\infty)=a^{-1}$ and $f^{-1}(f(0))=\{0,\infty\}$,
so~$f$ is unramified at~$0$ and~$\infty$
modulo all primes not in~$S$. This gives
\[
  \bigl(f,\{0,1,\infty\}\bigr) \in \GR_2^1[\Pcal_{4,6}]^\star(K,S)
  \quad\Longleftrightarrow\quad
  a\in R_S^*.
\]
The Milnor image is
\[
  s\left(\frac{a(x-1)^2}{x}\right)
  =\left( \frac{-2a+1}{a}, \frac{-4 a^2 + a - 2}{a} \right).
\]
so the Zariski closure is a curve, and hence
$\shafdim_2^1[\Pcal_{4,6}]^\star=1$.

\Pcase{4}{7}{1} 
We move $0$ to the fixed point and~$\infty$ and~$1$ to the $2$-cycle
with~$\infty$ ramified. Ignoring the ramification at~$\infty$ for the
moment, we find that~$f$ has the form $(ax^2+bx)/(x-1)(ax+c)$.
Then we see that~$f$ is ramified at~$\infty$ if and only if $c=a+b$,
so $f(x)=(ax^2+bx)/(x-1)(ax+a+b)$. We compute $\Resultant(f)=a^2(a+b)^2$,
so we can dehomogenize $a=1$, and for convenience replace~$b$ with~$b-1$, to get 
$f(x)=(x^2+(b-1)x)/(x-1)(x+b)$ with $\Resultant(f)=b^2$.
Further, we see that~$f$ is unramified at~$0$ if and only if $b\ne1$
and~$f$ is unramified at~$1$ if and only if $b\ne-1$. Hence
\[
  \bigl(f,\{0,1,\infty\}\bigr) \in \GR_2^1[\Pcal_{4,7}]^\circ(K,S)
  \quad\Longleftrightarrow\quad
  b\in R_S^*~\text{and}~b\ne\pm1.
\]
The Milnor image of~$f$ is
\[
  s\left( \frac{x^2+(b-1)x}{(x-1)(x+b)} \right)
  =\left(\frac{b^3+3b^2-3b+1}{b}, \frac{-2b^3-6b^2+6b-2}{b} \right).
\]
which proves that $\shafdim_2^1[\Pcal_{4,7}]^\circ=1$. Indeed, we have
again landed on the line $2s_1+s_2=0$; cf.\ the analysis
of~$\Pcal_{3,3}$.  However, if we want~$f$ to be unramfied at~$0$
and~$1$ modulo all primes not in~$S$, then we need~$1\pm b\in
R_S^*$. In particular, $(b,1-b)$ is one of the finitely many solutions
of the $S$-unit equation~$u+v=1$, so
$\shafdim_2^1[\Pcal_{4,7}]^\star=0$.

\Pcase{4}{8}{1} 
We move the $2$-cycle to~$0$ and~$\infty$ with~$0$ ramified
and the other point to~$1$. Then~$f$ has the form $f(x)=a(x-1)/bx^2$
with $\Resultant(f)=a^2b^2$, so we can dehomogenize~$a=1$
to get  $f(x)=(x-1)/bx^2$. Assuming that $b\in R_S^*$, we observe
that~$f$ is unramified at~$1$ and~$\infty$, even modulo primes not in~$S$.
Hence
\[
  \bigl(f,\{0,1,\infty\}\bigr) \in \GR_2^1[\Pcal_{4,8}]^\star(K,S)
  \quad\text{for all $b\in R_S^*$.}
\]
The Milnor image is
\[
  s\left( \frac{x-1}{bx^2}\right) =
  \left( \frac{-6 b + 1}{b},\frac{12 b - 2}{b} \right),
\]
so the Zariski closure in~$\Moduli_2^1$ of $\GR_2^1[\Pcal_{4,8}]^\star(K,S)$
is the line $2s_1+s_2=0$.

\Pcase{4}{9}{1} 
We move the 3-cycle to $1\to 0\to \infty\to 1$ with~$1$ a ramification point.
This puts~$f$ in the form $f(x)=a(x-1)^2/(ax^2+ex)$
with $\Resultant(f)=a^2(a+e)^2$. We dehomogenize $a=1$ and replace~$e$ with~$e-1$ to get 
$f(x)=(x-1)^2/(x^2+(e-1)x)$ with $\Resultant(f)=e^2$.
The fact that~$1$ is a ramification point in a 3-cycle tells us
that $(f^3)'(1)=0$, and one of the other points in the 3-cycle
will also be ramified if and only if $(f^3)''(1)=2(1-e^2)/e=0$.
Hence
\[
  \bigl(f,\{0,1,\infty\}\bigr) \in \GR_2^1[\Pcal_{4,9}]^\circ(K,S)
  \quad\Longleftrightarrow\quad
  e\in R_S^*~\text{and}~e\ne\pm1.
\]
The Milnor image is
\[
  s\left( \frac{(x-1)^2}{x^2+(e-1)x}\right) =
  \left( \frac{e^3 - 5 e^2 - e - 1}{e^2},\frac{-2 e^3 + 7 e^2 + 2 e + 1}{e^2} \right),
\]
so the closure of $\GR_2^1[\Pcal_{4,9}]^\circ(K,S)$ is a curve and
$\shafdim_2^1[\Pcal_{4,9}]^\circ=1$. However, if we want the $3$-cycle
to contain only one ramification point modulo primes not in~$S$, then
we need $e^2-1\in R_S^*$.  This yields solutions $(e,1-e)$ to the
$S$-unit equation $u+v=1$, so there are only finitely many such maps
and $\shafdim_2^1[\Pcal_{4,9}]^\star=0$.

\Pcase{4}{10}{0} 
We move the three fixed points to $0$,~$1$, and~$\infty$, and let the
fourth point be~$\a$ with $f(\a)=0$. Then $f$ has the form
$f(x) = (ax^2 + bx)/(ex+a+b-e)$ with $\a=-b/a$ and
\[
  \Resultant(f) = a(a+b)(a-e)(a+b-e).
\]
We dehomogenize $a=1$, so $f(x) = (x^2 + bx)/(ex+1+b-e)$ and $\a=-b$.
Then
\begin{align*}
  &\text{$\{0,1,\infty,-b\}$ has good reduction}
  \quad\Longleftrightarrow\quad
  b,1+b\in R_S^*, \\*
  &\text{$f$ has good reduction}
  \quad\Longleftrightarrow\quad
  1+b, 1-e, 1+b-e \in R_S^*, \\*
  &\bigl(f(x),\{0,1,\infty,-b\}\bigr)\in\GR_2^1[\Pcal_{4,10}]^\bullet(K,S)
  \quad\Longleftrightarrow\quad \\*
  &\omit\hfill $b, 1+b, 1-e, 1+b-e \in R_S^*.$
\end{align*}
But this means that $(-b,1+b)$ is a solution to the $S$-unit equation
$u+v=1$, so there are only finitely many values for~$b$; and then the
fact that $\bigl(b^{-1}(e-1),b^{-1}(1+b-e)\bigr)$ is also a solution
to the $S$-unit equation proves that there are only finitely many
values for~$e$. This completes the proof that
$\GR_2^1[\Pcal_{4,10}]^\bullet(K,S)/\PGL_2(R_S)$ is finite.

\Pcase{4}{11}{1} 
We move the points so that~$0$ and~$\infty$ are fixed by~$f$ and
$f(1)=0$. This puts~$f$ in the form $f(x)=ax(x-1)/(bx-c)$, with
$\Resultant(f)=a^2c(c-b)$. We dehomogenize~$c=1$, so
$f(x)=ax(x-1)/(bx-1)$. Then $f^{-1}(\infty)=\{\infty,b^{-1}\}$, and
our assumption that we have a good reduction model for~$\Pcal_{4,11}$
requires that~$b^{-1}$ be distinct from~$\{0,1,\infty\}$ for all
primes not in~$S$. Thus~$b^{-1}\in R_S^*$ and $b^{-1}-1\in R_S^*$.
The~$S$-unit equation $u-v=1$ has only finitely many solutions, so
there are finitely many values for~$b$.
We  observe that for thees~$b$ values, the map~$f$ is unramified modulo
all primes not in~$S$, since $f^{-1}(f(0))=f^{-1}(f(1))=\{0,1\}$ and
$f^{-1}(f(\infty))=f^{-1}(f(b^{-1}))=\{\infty,b^{-1}\}$.  We also note
that we can take~$b=2$, since~$2\in R_S^*$ by assumption. Thus for
every~$a\in R_S^*$, we see that $\bigl(ax(x-1)/(2x-1),
\{0,1,2^{-1},\infty\}\bigr)$ is
in~$\GR_2^1[\Pcal_{4,11}]^\star(K,S)$. The The Milnor image is
\[
  s\left( \frac{ax(x-1)}{2x-1}\right) =
  \left( \frac{2 a^2 - 2 a + 4}{a},\frac{a^4 - 2 a^3 + 6 a^2 - 4 a + 4}{a^2} \right),
\]
and hence the Zariski closure of~$\GR_2^1[\Pcal_{4,11}]^\circ(K,S)$
in~$\Moduli_2^1$ is a non-empty finite union of curves.  (We remark
that the pairs~$(f,X)$ studied in
Section~\ref{section:pfdynshafP1wt2d}, when restricted to the
case~$d=2$, have portrait~$\Pcal_{4,11}$.)

\Pcase{4}{12}{0} 
We move the points so that~$0$ and~$\infty$ are fixed by~$f$ and
$f(1)=0$. This puts~$f$ in the form $f(x)=ax(x-1)/(bx-c)$, with
$\Resultant(f)=a^2c(c-b)$. We dehomogenize~$a=1$, so
$f(x)=x(x-1)/(bx-c)$. The portrait~$\Pcal_{4,12}$ includes
a point in $f^{-1}(1)=\{x^2-(1+b)x+c=0\}$, and this point is
in~$K$, since the portrait is assumed to be $\Gal(\Kbar/K)$-invariant.
Thus $(1+b)^2-4c=t^2$ for some $t\in K$. Then
$(1+b+t)(1+b-t)=4c\in R_S^*$, so if we have a good reduction portrait
for~$f$, then $c,c-b,1+b\pm t\in R_S^*$. 
This gives us a $5$-term $S$-unit sum
\[
  (1+b+t) + (1+b-t) + 2(c-b) - 2c - 2 = 0.
\]
There are only finitely many solutions with no subsum equal
to~$0$~\cite{MR766298,MR1119694}, so it remains to analyze the~$10$
cases where some subsum vanishes.

\par\framebox{$(1+b+t)+(1+b-t)=0$}\enspace
So $b=-1$ and $f(x)=-x(x-1)/(x+c)$. Then $c$ and $c+1$ are in~$R_S^*$,
so there are only finitely many choices for~$c$.

\par\framebox{$(1+b\pm t)+2(c-b)=0$}\enspace
So $a-b+2c \pm t=0$. Substituting into $(1+b)^2-4c=t^2$
to eliminate~$t$ yields $b=c(c+2)/(c+1)$, and from that we
find that $c/(b-c)=1+c$. We know that $c,b-c\in R_S^*$, so this
shows that~$1+c\in R_S^*$. But then $(1+c,-c)$ is a solution
to the $S$-unit equation $u+v=1$, so there are only finitely many
possibilities for~$c$.

\par\framebox{$(1+b\pm t)-2c=0$}\enspace
So $1+b\pm t=2c$. Substituting into $(1+b)^2-4c=t^2$
to eliminate~$t$ yields $c^2-bc=0$, so either~$c=0$ or~$c-b=0$.
This contradicts the fact that~$c$ and~$c-b$ are~$S$-units.

\par\framebox{$(1+b\pm t)-2=0$}\enspace
So $1+b\pm t=2$. Substituting into $(1+b)^2-4c=t^2$
to eliminate~$t$ yields $c-b=0$, contratdicting the fact
that~$c-b\in R_S^*$.

\par\framebox{$2(c-b)-2c=0$}\enspace
So $b=0$ and $f(x)=x(x-1)/c$. We have $c\in R_S^*$ and $1-4c=t^2$.  We
write $a=\g u^3$ with $u\in R_S^*$ and~$\g$ chosen from a finite set
of representatives for $R_S^*/(R_S^*)^3$.  Then $(u,t)$ is an
$R_S$-integral point on the genus~$1$ curve $y^2=1-4\g x^3$. Siegel's
theorem\cite[D.9.1]{hindrysilverman:diophantinegeometry} says that
there are only finitely many such points.

\par\framebox{$2(c-b)-2=0$}\enspace
So $c=b+1$ and $f(x)=x(x-1)/(bx-b-1)$. We have $c\in R_S^*$ and
$c^2-4c=t^2$. We write $c=\g u^3$ with $u\in R_S^*$ and~$\g$ chosen
from a finite set of representatives for $R_S^*/(R_S^*)^2$.  Then
$(u,t/u)$ is an $R_S$-integral point on the genus~$1$ curve $y^2=\g^2
x^4-4\g x$.  Siegel's
theorem\cite[D.9.1]{hindrysilverman:diophantinegeometry} says that
there are only finitely many such points.

\par\framebox{$-2c-2=0$}\enspace
So $c=-1$ and $f(x)=x(x-1)/(bx+1)$. We have $1+b\in R_S^*$ and
$(1+b)^2+4=t^2$.  We write $1+b=\g u^2$ with $u\in R_S^*$ and~$\g$
chosen from a finite set of representatives for $R_S^*/(R_S^*)^2$.
Then $(u,t)$ is an $R_S$-integral point on the genus~$1$ curve
$y^2=\g^2u^4+4$. Siegel's
theorem\cite[D.9.1]{hindrysilverman:diophantinegeometry} says that
there are only finitely many such points.

\Pcase{4}{13}{0} 
We move the $2$-cycle to~$0,\infty$, so $f(x) = (ax+b)/(cx^2+dx)$. The
resultant is $-bc(ad-bc)$, so we can dehomogenize $c=1$. Moving a
fixed point to~$1$, we have $a+b=d+1$, so $f(x) =
(ax+b)/(x^2+(a+b-1)x)$ with $\Resultant(f)=-b(a-1)(a+b)$. The good
reduction assumption for~$f$ tells us that $b,a-1,a+b\in R_S^*$, so we
obtain a $4$-term $S$-unit equation
\[
  (a+b) - (a-1) - b - 1 = 0.
\]
The multivariable $S$-unit sum theorem~\cite{MR766298,MR1119694} says that
there are finitely many solutions with no subsum equal to~$0$. Ignoring those
finitely many solutions, there are three subsum~$0$ cases:
\begin{parts}
\Part{(1)} $(a+b)-(a-1)=0$, so $b=-1$.  
\Part{(2)} $(a+b)-b=0$, so $a=0$.  
\Part{(3)} $(a+b)-1=0$, so $a=1-b$.  
\end{parts}
The portrait~$\Pcal_{4,13}$ has a second fixed point. The fixed points
of~$f$ are the roots of
\[
  (x-1)(x^2+(a+b)x+b) = 0.
\]
We have assumed that the points in~$\Pcal_{4,13}$ are defined over~$K$, so the
quadratic has a root in~$K$. Thus there is a $t\in K$ such that
\[
  (a+b)^2-4b=t^2.
\]
And since~$a,b\in R_S$, we have~$t\in R_S$. From earlier we know
that~$a+b$ and~$b$ are in~$R_S^*$, so we can write $a+b=\g u^2$ and
$b=\d v^4$, with $u,v\in R_S^*$ and~$\g,\d$ chosen from a finite set
of representatives for $R_S^*/(R_S^*)^4$. Then $(uv^{-1},tv^{-2})$ is
a $R_S$-integral point on the genus~$1$ curve $y^2=\g^2x^4-4\d$.
Siegel's theorem\cite[D.9.1]{hindrysilverman:diophantinegeometry} says
that there are only finitely many such points. Hence there are only
finitely many possibilities for the ratio~$u/v$, and thus only
finitely many possibilities for $\g^2\d^{-1}(u/v)^4 = (a+b)^2/b$.  But
we know from the three cases described earlier that either $b=-1$ or
$a=0$ or $a=1-b$. Substituting these into $(a+b)^2/b$, we find that
there are finitely many values for, respectively,~$-(a-1)^2$,~$b$,
and~$1/b$. 

\Pcase{4}{14}{0} 
We move the points to $\a\to1\to0\to\infty$ with~$\infty$ fixed. Ignoring~$\a$
for the moment, this means that~$f$ has the form
$f(x)=(ax^2-(a+c)x+c)/ex$. We have $\Resultant(f)=-ace^2$, so good
reduction forces $a,c,e\in R_S^*$.  We dehomogenize by setting
$e=1$. At this stage the pair $\bigl(f,\{0,1,\infty\}\bigr)$ has good
reduction. However, we need to adjoin the point~$\a$ to the set~$X$.
The point~$\a$ is a root of the numerator of~$f(x)-1$, so~$\a$ is
a root of the polynomial
\begin{equation}
  \label{eqn:ax2ac1xc}
  a x^2 - (a+c+1) x + c = 0.
\end{equation}
Since we are assuming that $\a\in K$, the discriminant of this
quadratic polynomial is a square in~$K$, say
\[
  t^2 = (a+c+1)^2-4ac \quad\text{with}\quad t\in R_S.
\]
Then
\[
  (a+c+1+t)(a+c+1-t) = 4ac \in R_S^*,
\]
so $a+c+1\pm t\in R_S^*$.  So we now know four $S$-units,
\[
  a,c,a+c+1+t,a+c+1-t\in R_S^*,
\]
which yields a $5$-term $S$-unit sum
\[
   (a+c+1+t) + (a+c+1-t) -2a - 2c - 2 = 0.
\]
There are only finitely many solutions with no subsum equal
to~$0$~\cite{MR766298,MR1119694}, so it remains to analyze the~$10$
cases where some subsum vanishes.

\par\framebox{$(a+c+1+t) + (a+c+1-t)=0$}\enspace
Substituting $c=-a-1$, we find that $t^2 = -4a(a-1)$.
Since~$a\in R_S^*$, we may write $a=\g u^3$ with $u\in R_S^*$ and~$\g$
chosen from a finite set of representatives for $R_S^*/(R_S^*)^3$.
Then $(u,tu^{-1})$ is an $S$-integral point on the genus~$1$ curve
$y^2=-4\g^2x^4+4\g^2 x$. Siegel's
theorem\cite[D.9.1]{hindrysilverman:diophantinegeometry} says that
there are only finitely many such points.

\par\framebox{$(a+c+1\pm t) - 2a =0$}\enspace
Then $0=(a+c+1)^2-4ac-t^2=4a$, contradicting $a\in R_S^*$.

\par\framebox{$(a+c+1\pm t) - 2c =0$}\enspace
Then $0=(a+c+1)^2-4ac-t^2=4c$, contradicting $c\in R_S^*$.

\par\framebox{$(a+c+1\pm t) - 2 =0$}\enspace
Then
\[
  0=(a+c+1)^2-4ac-t^2=4(a+c-ac).
\]
Hence $1=(a-11)(c-1)$, so $a-1$
and $c-1$ are $S$-units.  Thus $(1-a,a)$ and $(1-c,c)$ are each
solutions to the $S$-unit equation $u+v=1$, which has finitely many
solutions.

\par\framebox{$-2a-2c=0$}\enspace
Substituting $a=-c$, we find that $t^2=1+4a^2$.  Since~$a\in R_S^*$,
we may write $a=\g u^2$ with $u\in R_S^*$ and~$\g$ chosen from a
finite set of representatives for $R_S^*/(R_S^*)^2$.  Then $(u,t)$ is
an $R_S$-integral point on the genus~$1$ curve $y^2=1+4\g^2
x^4$. Siegel's
theorem\cite[D.9.1]{hindrysilverman:diophantinegeometry} says that
there are only finitely many such points.

\par\framebox{$-2a-2=0$}\enspace
Substituting $a=-1$, we find that
$t^2=c^2+4c$. Since~$c\in R_S^*$,
we may write $c=\g u^3$ with $u\in R_S^*$ and~$\g$ chosen from a
finite set of representatives for $R_S^*/(R_S^*)^3$.
Then $(u,tu^{-1})$ is an $S$-integral point on the genus~$1$
curve $y^2=\g^2x^4+4\g x$. Siegel's
theorem\cite[D.9.1]{hindrysilverman:diophantinegeometry} says that
there are only finitely many such points.

\par\framebox{$-2c-2=0$}\enspace
Substituting $c=-1$, we find that $t^2=a^2+4a$. The analysis is then
identical to the previous case with $-2a-2=0$.

\Pcase{4}{15}{1} 
The portrait $\Pcal_{4,15}$ contains the portrait $\Pcal_{3,3}$ as a
subportrait, and we already proved that
$\shafdim_2^1[\Pcal_{3,3}]^\circ=0$, so the same is true
for~$\Pcal_{4,15}$. On the other hand, if we allow any of the points
in~$\Pcal_{4,15}$ to have weight greater than~$1$, then the total
weight would be at least~$5$, in which case
Theorem~\ref{theorem:shafconjP1}(a) gives us finiteness. Hence
$\shafdim_2^1[\Pcal_{4,15}]^\bullet=0$.

\Pcase{4}{16}{0} 
The portrait $\Pcal_{4,16}$ contains the portrait $\Pcal_{3,3}$ as a
subportrait, and we already proved that
$\shafdim_2^1[\Pcal_{3,3}]^\circ=0$, so the same is true
for~$\Pcal_{4,16}$. On the other hand, if we allow any of the points
in~$\Pcal_{4,16}$ to have weight greater than~$1$, then the total
weight would be at least~$5$, in which case
Theorem~\ref{theorem:shafconjP1}(a) gives us finiteness. Hence
$\shafdim_2^1[\Pcal_{4,16}]^\bullet=0$.

\Pcase{4}{17}{0} 
The portrait $\Pcal_{4,17}$ contains the portrait $\Pcal_{3,1}$ as a
subportrait, and we already proved that
$\shafdim_2^1[\Pcal_{3,1}]^\circ=0$, so the same is true
for~$\Pcal_{4,17}$. On the other hand, if we allow any of the points
in~$\Pcal_{4,17}$ to have weight greater than~$1$, then the total
weight would be at least~$5$, in which case
Theorem~\ref{theorem:shafconjP1}(a) gives us finiteness. Hence
$\shafdim_2^1[\Pcal_{4,17}]^\bullet=0$.

\Pcase{4}{18}{1} 
Moving the four points to $b,1,0,\infty$, we see that
$f(x)=a(x-1)(x-b)/ex$
with $\Resultant(f)=a^2be^2$, so we can dehomogenize~$a=1$.
Then
\[
  \bigl(f(x),\{b,1,0,\infty\}\bigr)\in\GR_2^1[\Pcal_{4,18}]^\bullet(K,S)
  \quad\Longleftrightarrow\quad
  b,1-b,e\in R_S^*.
\]
(Note that~$b,1-b\in R_S^*$ is the condition for $\{b,0,1,\infty\}$ to
have good reduction outside~$S$.) Then~$(b,1-b)$ is a solution to the
$S$-unit equation $u+v=1$, so there are are finitely many values
for~$b$. Each value of~$b$, for example~$b=2$, yields a curve
in~$\Moduli_2^1$, for example, the Milnor image with~$b=2$ is
\[
  s\left( \frac{(x-1)(x-2)}{ex}\right) =
  \left( \frac{2 e^2 - 4 e + 17}{2 e},\frac{-4 e^3 + 19 e^2 - 8 e + 17}{2 e^2} \right).
\]
Hence $\shafdim_2^1[\Pcal_{4,18}]^\circ=1$. However, since
$f^{-1}(f(1))=f^{-1}(f(b))=\{1,b\}$ and
$f^{-1}(f(0))=f^{-1}(f(\infty))=\{0,\infty\}$, we see that~$f$ modulo
primes not in~$S$ is unramified at the points in~$\{b,1,0,\infty\}$,
so the above maps with~$b=2$ and $e\in R_S^*$ are in
$\GR_2^1[\Pcal_{4,18}]^\star(K,S)$, and hence
$\shafdim_2^1[\Pcal_{4,18}]^\circ=1$.

\Pcase{4}{19}{0} 
Moving $0,\infty$ to the $2$-cycle and~$1$ to the incoming point, we
see that $f(x)=a(x-1)/(bx^2+cx)$. This has
$\Resultant(f)=a^2b(b+c)$. We dehomogenize $b=1$, so
$f(x)=a(x-1)/x(x+c)$ with $a,1+c\in R_S^*$.  The fourth point of the
portrait is in~$f^{-1}(1)$, so it is a root of $x^2+(c-a)x+a$. Since
that point is in~$K$ by assumption, we see that the discriminant
$(c-a)^2-4a$ must be a square in~$K$, say equal to~$t^2$. Then
\[
  (c-a+t)(c-a-t)=4a\in R_S^*,
\]
so $c-a\pm t\in R_S^*$. This gives us a $5$-term $S$-unit sum
\[
  (c-a+t) + (c-a-t) - 2(1+c) + 2a + 2 = 0.
\]
There are only finitely many solutions with no subsum equal
to~$0$~\cite{MR766298,MR1119694}, so it remains to analyze the~$10$
cases where some subsum vanishes.

\par\framebox{$-2(1+c)+2a=0$}\enspace
Then $a=c+1$ and $f(x)=a(x-1)/x(x+a-1)$.  We have $1-4a=t^2$.  We
write $a=\g u^4$ with $u\in R_S^*$ and $\g$ chosen from a finite set
of representatives for $R_S^*/(R_S^*)^4$. Then $1-4\g u^4=t^2$, so
$(u,t)$ is an $R_S$-integral point on the genus~$1$ curve $Y^2=1-4\g
X^4$.  Siegel's
theorem~\cite[D.9.1]{hindrysilverman:diophantinegeometry} tells us
that there are only finitely many solutions.

\par\framebox{$-2(1+c)+2=0$}\enspace
Then $c=0$ and $f(x)=a(x-1)/x^2$. This map has $e_f(0)=2$, so
we do not get the portrait~$\Pcal_{4,19}$ in which every point has
multiplicity~$1$.

\par\framebox{$2a+2=0$}\enspace
Then $a=-1$ and $f(x)=(-x+1)/x(x+c)$.  We have $(c+1)^2+4=t^2$.  We
write $c+1=\g u^2$ with $u\in R_S^*$ and $\g$ chosen from a finite set
of representatives for $R_S^*/(R_S^*)^2$. Then $\g^2 u^4 + 4 = t^2$,
so $(u,t)$ is an $R_S$-integral point on the genus~$1$ curve
$Y^2=\g^2 X^4 +4$.  Siegel's
theorem~\cite[D.9.1]{hindrysilverman:diophantinegeometry} tells us
that there are only finitely many solutions.

\par\framebox{$(c-a+t)+(c-a-t)=0$}\enspace
Then $a=c$ and $f(x)=a(x-1)/x(x+a)$. We have $a,1+a\in R_S^*$,
so $(-a,1+a)$ is a solution to the $S$-unit equation $u+v=1$.
Here there are only finitely many choices for~$a$.

\par\framebox{$(c-a\pm t)-2(1+c)=0$}\enspace
Then $\pm t=a+c+2$, and the equation $(c-a)^2-4a=t^2$ becomes
$ac+2a+c+1=0$. We rewrite this as $a(c+1)+(c+1)+1=0$.
Thus $\bigl(a(c+1),c+1\bigr)$ is a solution to the $S$-unit equation
$u+v+1=0$, so has only finitely many solutions.

\par\framebox{$(c-a\pm t)+2a=0$}\enspace
Then $\pm t=a+c$, and the equation $(c-a)^2-4a=t^2$ becomes
$4a(c+1)=0$. This contradicts the fact that~$a$ and~$c$ are
in~$R_S^*$.

\par\framebox{$(c-a\pm t)+2=0$}\enspace
Then $\pm t=c-a+2$, and the equation $(c-a)^2-4a=t^2$ becomes
$4(c+1)=0$, contradicting $c+1\in R_S^*$.

This completes the proof that $\shafdim_2^1[\Pcal_{4,19}]^\circ=0$.
But if we assign a weight greater than~$1$ to any of the points
in~$\Pcal_{4,19}$, then the resulting portrait will have total weight
at least~$5$, so Theorem~\ref{theorem:shafconjP1}(a) gives us finiteness. Hence
$\shafdim_2^1[\Pcal_{4,19}]^\bullet=0$.   

\Pcase{4}{20}{1} 
Moving $0,\infty$ to the $2$-cycle and~$1$ to an incoming point, we
see that $f(x)=a(x-1)/(bx^2+cx)$. This has
$\Resultant(f)=a^2b(b+c)$. In particular, $a,b\in R_S^*$, so we can
dehomogenize $b=1$ and $f(x)=a(x-1)/x(x+c)$ with $a,1+c\in R_S^*$.
The fourth point of the portrait in~$f^{-1}(\infty)$, so it is the
point~$-c$.  Then $\{0,1,\infty,-c\}$ has good reduction if and only
if $c,1+c\in R_S^*$, so $(-c,1+c)$ is a solution to the $S$-unit
equation $u+v=1$. There are thus only finitely many choices
for~$c$. For example, since $2\in R_S^*$, we may could take
$c=1$. Then $a(x-1)/x(x+1)\in\GR_2^1[\Pcal_{4,20}]^\circ(K,S)$ for all
$a\in R_S^*$.  The Milnor image is
\[
  s\left( \frac{a(x-1)}{x(x+1)}\right) =
  \left( \frac{a^2 - 10 a - 1}{2 a},\frac{-a^2 + 9 a + 1}{a} \right),
\]
which shows that the Zariski closure of
$\GR_2^1[\Pcal_{4,20}]^\circ(K,S)$ in $\Moduli_2^1$ is a non-empty
finite union of curves. Further, since
\begin{align*}
  f^{-1}(f(1))&=f^{-1}(f(\infty))=\{1,\infty\},\\
  f^{-1}(f(0))&=f^{-1}(f(-1))=\{0,-1\},
\end{align*}
we see that~$f$ modulo primes not in~$S$ is unramified at the points
in~$\{-1,1,0,\infty\}$, so the above maps with~$c=1$ and $a\in R_S^*$
are in $\GR_2^1[\Pcal_{4,20}]^\star(K,S)$, and hence
$\shafdim_2^1[\Pcal_{4,20}]^\star=1$.  Finally, we note that
$\shafdim_2^1[\Pcal_{4,20}]^\bullet=\shafdim_2^1[\Pcal_{4,20}]^\circ$, since
if we assign a weight greater than~$1$ to any of the points
in~$\Pcal_{4,20}$, then the resulting portrait will have total weight
at least~$5$, so Theorem~\ref{theorem:shafconjP1}(a) gives us
finiteness. Hence $\shafdim_2^1[\Pcal_{4,20}]^\bullet=1$.

\Pcase{4}{21}{0} 
The portrait $\Pcal_{4,21}$ contains the portrait $\Pcal_{3,1}$ as a
subportrait, and we already proved that
$\shafdim_2^1[\Pcal_{3,1}]^\circ=0$, so the same is true
for~$\Pcal_{4,21}$. On the other hand, if we allow any of the points
in~$\Pcal_{4,21}$ to have weight greater than~$1$, then the total
weight would be at least~$5$, in which case
Theorem~\ref{theorem:shafconjP1}(a) gives us finiteness. Hence
$\shafdim_2^1[\Pcal_{4,21}]^\bullet=0$.

\Pcase{4}{22}{0} 
We move three of the points in the 4-cycle to~$0$,~$1$, and~$\infty$, and
we denote the fourth point by~$c$.
The map~$f$ then has the form
\begin{align*}
  f(x) &= \frac{c(x-1)(x+a)}{x\bigl(x-c+(c-1)(c+a)\bigr)},\\
  \Resultant(f) & = ac^2(1-c)(2-c)(a+c)(1-a-c).
\end{align*}
The set $\{c,0,1,\infty\}$ has good reduction outside~$S$ if and only
if~$c,1-c\in R_S^*$. Hence
$\bigl(f,\{c,0,1,\infty\}\bigr)\in\GR_2^1[\Pcal_{4,22}]^\bullet$ if and only
if
\[
  a,c,1-c,2-c,a+c,1-a-c\in R_S^*.
\]
Then $(c,1-c)$ is a solution to the $S$-unit equation $u+v=1$,
so there are only finitely many values of~$c$.
Then the fact that $(a+c,1-a-c)$ is also a solution to the
$S$-unit equation shows that there are only finitely many values of~$a$.
Hence $\shafdim_2^1[\Pcal_{4,22}]^\bullet=0$.

This completes our analysis of the~$22$ weight~$4$ portraits in
Tables~\ref{table:wt4deg2P1} and~\ref{table:wt4deg2P1+}, and with it,
the proof of Theorem~\ref{theorem:deg2wt4classification}.
\end{proof}

\section{Possible Generalizations}
\label{section:generalizations}

It is natural to attempt to general Theorem~\ref{theorem:shafconjP1}(a) to
self-maps of~$\PP^N$ with $N\ge2$. The naive generalization
fails. Indeed, suppose that we define~$\GR_d^N[n](K,S)$ to be the set
of triples~$(f,Y,X)$ such that $f:\PP^N_K\to\PP^N_K$ is a degree~$d$
morphism defined over~$K$ and~$Y\subseteq X\subset\PP^1(\Kbar)$ are
finite sets satisfying the following conditions:\footnote{We note that
  this definiton is not entirely consistant with our definition
  of~$\GR_d^1[n](K,S)$, since we've replaced the earlier ramification
  condition on~$Y$ with the simpler condition that~$Y$ contain~$n$
  points.}
\begin{parts}
  \Part{\textbullet}
  $X=Y\cup f(Y)$.
  \Part{\textbullet}
  $X$ is $\Gal(\Kbar/K)$-invariant.
  \Part{\textbullet}
  $\#Y=n$.
  \Part{\textbullet}
   $f$ and $X$ have good reduction outside $S$.
\end{parts}
Then it is easy to see that for any fixed~$d$ and~$N$, the
set~$\GR_d^N[n](K,S)$ can be infinite for arbitrarily large~$n$.
We illustrate with $d=N=2$, since the general case is then clear.

Consider the family of maps $f_{a,b}:\PP^2\to\PP^2$ defined by
\begin{equation}
  \label{eqn:fabXYZ}
  f_{a,b}(X,Y,Z) = [aXZ+X^2,bYZ+Y^2,Z^2]\quad\text{with $a,b\in R_S$.}
\end{equation}
Then~$f_{a,b}$ has good reduction at all primes~$\gp\notin S$. And it is
not an isotrivial family, since for example the characteristic polynomial
of~$f_{a,b}$ acting on the tangent space at the fixed point~$[0,0,1]$
is easily computed to be~$(T-a)(T-b)$. For a given~$n$,
we take~$K=\QQ$ and we take~$S$ to be the set of primes dividing
$2\prod_{i=1}^n(2^i-1)$, and we let
\[
  X_n := \bigl\{ [1,2^i,0]\in\PP^N(\QQ) : 0\le i\le n \bigr\}.
\]
Then~$X_n$ has good reduction at all~$p\notin S$, and, since
$f\bigl([1,y,0]\bigr)=[1,y^2,0]$, we see that $f_{a,b}(X_{n-1})\subset X_n$.
Hence
\[
  (f_{a,b},X_{n-1},X_n)\in \GR_2^2[n](\QQ,S)/\PGL_3(\ZZ_S)
\]
gives infinitely many inequivalent triples as~$a$ and~$b$ range over~$\ZZ_S$.

One key step in the proof of Theorem~\ref{theorem:shafconjP1}(a) that
goes wrong when we try to generalize to~$\PP^N$ is
Lemma~\ref{lemma:interpratfnc}, which says that if two
maps~$f,g:\PP^1\to\PP^1$ agree at enough points, then~$f=g$. This is
false in higher dimension, and indeed, the maps~$f_{a,b}$ defined
by~\eqref{eqn:fabXYZ} take identical values at all points on the
line~$Z=0$.

This suggests two ways to rescue the theorem.

First, we might simply say that two maps are ``the same'' if they take
the same values on a non-trivial subvariety of~$\PP^N$. This is a
somewhat drastic solution, but the following partial generalization of
Lemma~\ref{lemma:interpratfnc}, whose proof we leave to the reader,
makes it a reasonable solution.

\begin{lemma}
Let $K$ be a field, and let $f,g:\PP_K^N\to\PP_K^N$ be morphisms of
degrees~$d$ and~$e$, respectively. Suppose that
\[
  \#\bigl\{P\in\PP^N(K) : f(P)=g(P)\bigr\} \ge (d+e)^N+1.
\]
Then there is a curve~$C\subset\PP^N_K$ such that $f(P)=g(P)$ for all $P\in C$.
\end{lemma}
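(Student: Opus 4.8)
The plan is to realize the agreement locus of $f$ and $g$ as the zero scheme of a family of forms of degree $d+e$, and to show by a B\'ezout bound that when this locus is $0$-dimensional it has at most $(d+e)^N$ points; the hypothesis then forces it to have positive dimension. First I would write $f=[f_0,\ldots,f_N]$ and $g=[g_0,\ldots,g_N]$ with the $f_i$ forms of degree $d$ having no common zero and the $g_i$ forms of degree $e$ having no common zero, which is where we use that $f$ and $g$ are morphisms. Since both vectors $\bigl(f_i(P)\bigr)$ and $\bigl(g_j(P)\bigr)$ are then nonzero at every point, the condition $f(P)=g(P)$ is equivalent to the proportionality of these vectors, i.e.\ to the vanishing of all the $2\times2$ minors
\[
  h_{ij} := f_ig_j-f_jg_i \qquad (0\le i<j\le N),
\]
each a form of degree $d+e$. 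Hence the locus
\[
  Z := \bigl\{P\in\PP^N : f(P)=g(P)\bigr\}
\]
is the closed subscheme $V\bigl(\{h_{ij}\}\bigr)$ of $\PP^N_K$, defined over $K$. Because $Z(K)\subseteq Z(\Kbar)$, the hypothesis gives $\#Z(\Kbar)\ge (d+e)^N+1$, so it suffices to prove that a $0$-dimensional $Z$ satisfies $\#Z(\Kbar)\le (d+e)^N$; granting this, $\dim Z\ge1$, and since $Z$ is defined over $K$ it contains a $1$-dimensional closed $K$-subscheme $C$, on which $f=g$ by construction.

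So assume $\dim Z=0$ and work over $\Kbar$. The crux is to replace the many minors $h_{ij}$ by just $N$ forms of degree $d+e$ whose common zero scheme is still $0$-dimensional and still contains $Z$. Let $L\subseteq\Kbar[X_0,\ldots,X_N]_{d+e}$ be the $\Kbar$-span of the $h_{ij}$, and choose $\ell_1,\ldots,\ell_N\in L$ one at a time, generically. I claim that for such a choice every irreducible component of $V(\ell_1,\ldots,\ell_k)$ is either contained in $Z$ or has dimension $\le N-k$. Indeed, if $W$ is a component of $V(\ell_1,\ldots,\ell_k)$ with $W\not\subseteq Z=V(\{h_{ij}\})$, then some $h_{ij}$ does not vanish identically on $W$, so the elements of $L$ vanishing on $W$ form a proper linear subspace; as there are only finitely many such $W$, a generic $\ell_{k+1}\in L$ lies in none of these subspaces and therefore cuts the dimension of each $W$ down by one. (Infiniteness of $\Kbar$, which is why we passed to the algebraic closure, is exactly what guarantees a vector outside a finite union of proper subspaces.) After $N$ steps, $V(\ell_1,\ldots,\ell_N)$ is $0$-dimensional and contains $Z$.

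Now $V(\ell_1,\ldots,\ell_N)$ is a proper intersection of $N$ hypersurfaces of degree $d+e$ in $\PP^N$, so B\'ezout's theorem bounds the number of its points, counted with multiplicity, by $(d+e)^N$. Therefore $\#Z(\Kbar)\le\#V(\ell_1,\ldots,\ell_N)(\Kbar)\le (d+e)^N$, which is the contradiction we sought. I expect the main obstacle to be the bookkeeping in the dimension-cutting induction, in particular verifying that a single generic element of $L$ simultaneously lowers the dimension on all finitely many offending components, and checking that a positive-dimensional $K$-subscheme of $Z$ indeed yields an honest curve $C\subset\PP^N_K$ with $f|_C=g|_C$. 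One could alternatively compute the pullback class $(f\times g)^*[\Delta]$ in the Chow ring of $\PP^N\times\PP^N$, which gives the sharper bound $\sum_{i=0}^N d^ie^{N-i}\le (d+e)^N$ on a $0$-dimensional $Z$, but the minors-and-B\'ezout route matches the stated bound and is the most elementary.
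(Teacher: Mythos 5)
Your proof is correct, but there is a wrinkle in the comparison: the paper offers no proof of this lemma at all---it is stated with the remark that the proof is left to the reader. So your argument can only be measured against the paper's proof of the one-variable analogue, Lemma~\ref{lemma:interpratfnc}, and against that benchmark you have taken a genuinely different and more elementary route. Your argument---realizing the agreement locus $Z$ as the common zero set of the $2\times2$ minors $h_{ij}=f_ig_j-f_jg_i$ of degree $d+e$ (legitimate precisely because $f$ and $g$ are morphisms, so neither coordinate vector ever vanishes), then, assuming $\dim Z=0$, inductively choosing generic $\Kbar$-linear combinations $\ell_1,\dots,\ell_N$ of the $h_{ij}$ so that every component of $V(\ell_1,\dots,\ell_k)$ is either inside $Z$ or of dimension at most $N-k$, and finally applying the B\'ezout inequality to the proper intersection $V(\ell_1,\dots,\ell_N)$ to get $\#Z(\Kbar)\le(d+e)^N$---is sound: the vector-space avoidance step (which is where infiniteness of $\Kbar$ enters), the dimension bookkeeping, and the concluding cut-down to a curve over $K$ (intersect a positive-dimensional $K$-component of $Z$ with $K$-hypersurfaces not containing it until the dimension is exactly one) are all routine to complete. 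By contrast, the paper's proof of Lemma~\ref{lemma:interpratfnc} works on the product: it pushes the diagonal forward under $f\times g$, computes the global intersection number $\G_{f,g}\cdot\D$ in $\Pic(\PP^1\times\PP^1)$, and compares it with a sum of local multiplicities. The natural generalization of that argument, which you sketch in your closing sentence, computes the pullback of the diagonal class of $\PP^N\times\PP^N$ and yields the sharper count $\sum_{i=0}^N d^ie^{N-i}\le(d+e)^N$ when $Z$ is finite; note also that in dimension one that method gives the stronger conclusion $f=g$, whereas for $N\ge2$ one only gets a positive-dimensional agreement component, which is exactly why the lemma is stated with a curve. Your minors-plus-B\'ezout route avoids Chow-ring formalism and excess-intersection issues entirely, at the cost of the slightly weaker count---which happens to be exactly the bound the lemma asserts.
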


Second, we might insist that the marked points in the set~$X$ are in
sufficiently general position to ensure that~$f|_X=g|_X$
forces~$f=g$. Thus writing~$\End_d^N$ for the space of degree~$d$
self-morphisms of~$\PP^N$, we might say that a set~$Y\subset\PP^N$ is
in \emph{$d$-general position for~$\PP^N$} if the map
\[
  \End_d^N\longrightarrow (\PP^N)^{\#Y},\quad
  f \longmapsto \bigl(f(P)\bigr)_{P\in Y}
\]    
is injective. Then a version of Theorem~\ref{theorem:shafconjP1}(a) might be
true if we restrict to triples~$(f,Y,X)\in\GR_d^N[n](K,S)$ for which~$Y$ is
in~$d$-general position for~$\PP^N$.

We will not further pursue these, or other potential, generalizations
of Theorem~\ref{theorem:shafconjP1}(a) to~$\PP^N$ in this paper.

A second possible generalization of our results would be to extend
them to other types of fields, for example taking~$K=k(C)$ to be the
function field of a curve over an algebraically closed field~$k$.
If~$k$ has characteristic~$0$, then much of the argument in this paper
should carry over, although there may be issues with isotrivial maps;
while if~$k$ has characteristic~$p>0$, then issues of wild
ramification arise, as does the fact that the theorem on $S$-unit
equations is more restrictive in requiring more than the simple ``no
vanishing subsum'' condition.  Again, we have chosen not to pursue
such function field generalizations in the present paper.


\begin{acknowledgement}
The author would like to thank Dan Abramovich, Rob Benedetto, Noah
Giansiracusa, Jeremy Kahn, Sarah Koch, and Clay Petsche for their
helpful advice.
\end{acknowledgement}




\def\cprime{$'$}

\end{document}